\newtheorem{theorem}{Theorem}[section]
\newtheorem{lemma}[theorem]{Lemma}
\theoremstyle{definition}
\newtheorem{definition}[theorem]{Definition}
\newtheorem{corollary}[theorem]{Corollary}
\newtheorem{propn}[theorem]{Proposition}
\theoremstyle{remark}
\newtheorem{remark}[theorem]{Remark}
\numberwithin{equation}{section}
\newcommand{\R}{\mathbb{R}}
\newcommand{\F}{\mathscr{F}}
\newcommand{\Za}{\mathscr{Z}}
\newcommand{\N}{\mathbb{N}}
\newcommand{\Z}{\mathbb{Z}}
\newcommand{\Hi}{\mathcal{H}}
\newcommand{\DF}{\mathcal{F}}
\begin{document}

%\title[New translations associated with the SAFT and shift invariant spaces]{New translations associated with the special affine Fourier transform and shift invariant spaces}

\title{The system of translates and the special affine Fourier transform}
%[The system of translates and the SAFT]
\author{Md Hasan Ali Biswas}
\address{Department of Mathematics, Indian Institute of Technology Madras, Chennai - 600036, India}
\email{mdhasanalibiswas4@gmail.com}

\author{Frank Filbir}
\address{Mathematical Imaging and Data Analysis, Helmholtz Center Munich,
\newline Department of Mathematics, Technical University of Munich, Germany}
\email{filbir@helmholtz-muenchen.de, frank\_filbir@web.de}

%\address{Department of Mathematics, Technical University of Munich, Germany}
%\email{frank\_filbir@web.de}

\author{Radha Ramakrishnan$^*$}
\address{Department of Mathematics, Indian Institute of Technology Madras, Chennai - 600036, India}
\email{radharam@iitm.ac.in}

\subjclass[2020]{Primary 42A38; Secondary 42A85, 42C15}

%\date{October 20, 2020 and, in revised form, June 22, 2001.}

\keywords{Convolution, Poisson summation formula, shift invariant space, Wendel's theorem, Zak-transform.\\
*Corresponding author: Radha Ramakrishnan, email:radharam@iitm.ac.in}

\begin{abstract}
The translation operator $T^A$ associated with the special affine Fourier transform (SAFT) $\F_A$ is introduced from harmonic analysis point of view. \textcolor{black}{The analogues of Wendel's theorem, Wiener theorem, Weiner-Tauberian theorem and Bernstein type inequality in the context of the SAFT are established.} The shift invariant space $V_A$ associated with the special affine Fourier transform is introduced and studied along with sampling problems.
\end{abstract}

\maketitle

\section{Introduction and background}
The special affine Fourier transform (SAFT) was first considered by S. Abe and J. T. Sheridan in \cite{abe} for the study of certain operations on optical wave 
functions. The SAFT is formally defined as  
\begin{equation}\label{eq:Intro2}
\mathscr{F}_Af(\omega)=\frac{1}{\sqrt{2\pi |b|}}\int_\R f(t) e^{\frac{i}{2b}(at^2+2pt-2\omega t+d\omega^2+2(bq-dp)\omega)} dt,
\end{equation}
where $A$ stands for the set $\{a,b,c,d,p,q\}$ of real parameters which satisfy the relation $ad-bc=1$. The integral transform \eqref{eq:Intro2} is related to the special affine linear transform of the phase space 
\begin{equation}\label{eq:Intro1}
\begin{pmatrix}
t^\prime\\\omega^\prime
\end{pmatrix}
=\begin{pmatrix}
a&b\\
c&d
\end{pmatrix}
\begin{pmatrix}
t\\ \omega
\end{pmatrix}
+
\begin{pmatrix}
p \\ q
\end{pmatrix}.
\end{equation} 
Due to the conditions on the parameters $a,b,c,d$ the matrix in \eqref{eq:Intro1} belongs to the special linear group $\mathrm{SL}(2,\R)$ and the affine transform \eqref{eq:Intro1} is therefore given by elements from the inhomogeneous linear group 
\[
\mathrm{ISL}(2,\R)=\left\{{\footnotesize \left(\begin{array}{cc} M& v\\ 0&1\end{array}\right)}: \, M\in\mathrm{SL}(2,\R),\ v\in\R^2\right\}.
\]
This justifies the name special affine Fourier transform for \eqref{eq:Intro2}. The action of the group $\mathrm{SL}(2,\R)$ on the time frequency plane and the relation to quadratic Fourier transforms is well studied. We will not go into the details here but refer to the book \cite{Gosson_symplectic}.
\par A number of important transforms are special cases of the SAFT. For example, $A=\{0,1,-1,0,0,0\}$ gives the ordinary Fourier transform and $A=\{0,-1,1,0,0,0\}$ its inverse. The parameter set $A=\{\cos\theta, \sin\theta,-\sin\theta, \cos\theta, 0,0\}$ gives the fractional Fourier transform, and 
$A=\{1,\lambda,0,1,0,0\}$ produces the Fresnel transform. 
\par In optics, certain one parameter subgroups of the $\mathrm{ISL}(2,\R)$ are of special interest. Among them are the fractional Fourier transform, the Fresnel transform (also called free space propagation in this context), the hyperbolic transform $A=\{\cosh\theta,\sinh\theta,\sinh\theta,\cosh\theta,0,0\}$, the lens transform $A=\{1,0,\lambda,1,0,0\}$, and the magnification transform $A=\{e^\beta,0,0,e^{-\beta},0,0\}$. The latter two cases need a careful analysis for the limit case $b\to 0$ which we will not consider in this paper. We will not try to expound the various connections to optics further but refer to \cite{saftOptics} for more details.
\par In this paper, we consider \eqref{eq:Intro2} from the point of view of applied harmonic analysis and take it as a signal transform of a (suitable) function. We are mainly interested in studying the principal shift invariant spaces and sampling theorems related to the SAFT. In the classical case, the principal shift invariant space generated by $\phi\in L^2(\R)$ 
is defined as $V(\phi)=\overline{span}\{T_k\phi=\phi(\cdot-k): k\in\Z\}$. The classical Fourier transform ($A=\{0,1,-1,0,0,0\}$ in \eqref{eq:Intro2}) plays a crucial role for the analysis of such spaces. The crucial point is that the ordinary translation and the classical Fourier transform are intimately related. This is due to the identity $e^{i\omega (t-x)}=e^{i\omega t} e^{-i\omega x}$ which gives, as a consequence the convolution theorem, the relation between translation, modulation, Fourier transform etc. These theorems are used over and over again in Fourier analysis and in the study of shift invariant spaces in particular. It is completely obvious that the ordinary translation does not interact nicely with the SAFT (resp. its complex exponential kernel). Hence working with the SAFT a new concept of a translation is needed. This generalized translation should be linked to the SAFT in an analogous manner as the ordinary translation is linked to the classical Fourier transform. If this is the case, then it seems reasonable to expect that the central theorems (convolution theorem etc.) hold in a similar manner. An idea for the construction of such generalized translation comes from the observation that in the classical setting we have 
\[
T_xf(t)=\mathscr{F}^{-1}(e^{-i \omega x}\mathscr{F}f)(t).
\]
In this paper, we define a new translation operator $T^A_x$, which serves our purpose in the case of SAFT. 
\par In \cite{Bhandari} A. Bhandari and A.I. Zayed considered chirp-modulation and used this to obtain a convolution theorem. However, they did not define a generalized translation operator explicitly, and hence did not investigate the consequences of this concept with respect to harmonic analysis. We shall demonstrate that the generalized translation is the suitable concept to obtain analogues of fundamental theorems such as Wendel's theorem for the multipliers, \textcolor{black}{Wiener theorem and Weiner-Tauberian theorem in connection with the closed ideals of translation invariant spaces in the context of the SAFT.} For a study of multipliers and Wendel's theorem for the Fourier transform we refer to \cite{larsen}, 
%\textcolor{black}{Further, we obtain analogue of Wiener theorem and Weiner-Tauberian theorem in the context of the SAFT. For a study of 
\textcolor{black}{ for Wiener theorem and Wiener-Tauberian theorem we refer to \cite{Folland1995} and \cite{Rudin_FA}}. The novelty of this approach is that apart from these theorems, one can look into the study of multiplier theory, including H\"{o}rmander multiplier theorem in the SAFT domain. Moreover, one can define an appropriate modulation operator in connection with the SAFT, using which one can define modulation spaces associated with the SAFT. This in turn, motivates to study multiplier results for the new modulation spaces. (See \cite{H3} in this connection.) 
\par Using the new translation operator $T^A_x$ the $A$-shift invariant spaces are defined as $V_A(\phi)=\overline{span}\{T^A_k\phi: k\in\Z\}$ for an appropriate function $\phi\in L^2(\R)$. When $\phi$ belongs to the Wiener amalgam space $W\big(C,\ell^1(\Z)\big)$ the space $V_A(\phi)$ turns out to be a reproducing kernel Hilbert space. Moreover, we give characterization theorems for the system of translates $\{T^A_k\phi: k\in\Z\}$ to be a \textcolor{black}{frame sequence}, orthogonal system or a Riesz sequence.
%\textcolor{black}{\st{In some of these results instead of proving the theorems with new translation operator, we establish a connection between it and the classical translation so that we arrive at shorter proofs.}} 
If the system of translates is a frame then an important question is about the nature of the dual frame elements. We show that in our setting, the elements of the dual frame of system of $A$-translates are also $A$-translates of a single function. For a study of shift invariant spaces, system of translates, frames and Riesz basis in the classical case, we refer to Christensen \cite{ole}.\\
\par In the final part of the paper we study the sampling in $A$-shift invariant spaces. A fundamental problem in sampling theory is to find, for a certain class of 
functions, appropriate conditions on a countable sampling set $X=\{x_j\in\R: j\in J\}$ under which a given function $f\in V$ can be reconstructed 
uniquely and stably from the samples $\{f(x_j): j\in J\}$. We refer to the work of Butzer and Stens \cite{Butzer} for a review on sampling theory and its history. When $V$ is the classical principal shift invariant space with a single generator $V(\phi)$ or multi-generators $V(\phi_1, \phi_2,...,\phi_r),~\phi, \phi_1, \phi_2,... \phi_r\in L^2(\R)$, there is a huge literature available on several interesting problems connected with sampling theory starting from the fundamental Shannon sampling theorem. We cite only a few references in this connection for the reader to get familiarity with this subject matter. (See \cite{AldGro2000}, \cite{grochenig}, \cite{Aldroubi2}, \cite{Aldroubi}, \cite{IEEE}, \cite{Xian1}, \cite{Garcia}, \cite{Kadec}, \cite{Nashed}, \cite{sarvesh}, \cite{Sun}, \cite{Xian}, \cite{Zhang}, \cite{ZhouSunJFAA}). \\
\par In this paper, similar to Theorem $4.2$ of \cite{grochenig} and Theorem $2.1$ of \cite{Antony}, we obtain equivalent conditions for a set $X$ to be a stable set of sampling for $V_A(\phi)$ in terms of the operator $U$ where $U_{jk}=T_k^A\phi(x_j)$, the reproducing kernel and the Zak transform $Z_A$, which we introduce here. We also obtain a sufficient condition for the set of integers to be a stable set of sampling for $V_A(\phi).$
\textcolor{black}{In the study of non uniform sampling and average sampling, Bernstein type inequalities play an important role. In this paper we obtain an analogue of Bernstein type inequality for $V_A(\phi)$. However, we do not intend to study non-uniform sampling and average sampling in this paper. We focus on uniform sampling. In particular,} when $\Z$ turns out to be a stable set of sampling, we obtain a reconstruction formula and hence a sampling theorem \textcolor{black}{in the sense of $L^2$ convergence} for certain $A$-shift invariant spaces $V_A(\phi)$. \textcolor{black}{Further, under some additional hypotheses on $\phi$, we obtain a sampling formula in the sense $L^2$ convergence and uniform convergence.} As corollaries we obtain Shannon sampling theorem in the SAFT domain and sampling theorem for the $A$-shift invariant space generated by second order $B$-spline.\\

\par We organize our paper as follows. In section $2$, we define $A$-convolution of a measure and a function and chirp modulation $C_s\mu$ of a measure $\mu$. Using this and the function $\rho_A(x)=e^{\frac{i}{2b}(ax^2+2px)}$, we obtain a relation between classical translation and $A$-translation. We prove an analogue of Wendel's theorem for the SAFT. \textcolor{black}{In section $3$, we study closed ideals in the Banach algebra $(L^1(\R),\star_A)$. We obtain analogues of Wiener theorem and Wiener-Tauberian theorem in the context of the SAFT.} In section $4$, we study $A$-shift invariant spaces and their theoretical aspects. Section $5$ as well as Section $6$ are devoted to sampling theorems in $A$-shift invariant spaces. Finally in Section $7$, we present a local reconstruction method for sampling in $A$-shift invariant spaces along with implementation.\\ 
\par Now we shall provide the necessary terminology and background for this paper.

%\section{Preliminaries}

%\begin{definition}
%The special affine Fourier transform (SAFT) of a function $f$, denoted by $\mathscr{F}_A(f)$, is formally defined as 
%$$
%\mathscr{F}_Af(\omega)= \frac{1}{\sqrt{2\pi |b|}}\int_\R f(t) e^{\frac{i}{2b}(at^2+d\omega ^2-2t\omega +2pt+2(bq-dp)\omega )} dt\quad \omega \in \R,
%$$
%where $A$ stands for the set of six parameters $\{a,b,c,d,p,q\}$ with $ad-bc=1.$
%\end{definition}

%\par It is well known that the SAFT is a bounded linear operator from $L^1(\R)$ to the space $C_0(\R).$ In particular $||\F_Af||_\infty \leq (2\pi|b|)^{-1/2}||f||_1.$
%\par Let
%$\eta_A(\omega)= e^{\frac{\i}{2b}(d\omega^2+2(bq-dp)\omega)},\quad \rho_A(t)=e^{\frac{\i}{2b}(at^2+2pt)}$, for $\omega, t \in \R$.
%\par The inverse transform of $\mathscr{F}_A$ is given by 
%$$
%\mathscr{F}_A^{-1} g(t)=\frac{1}{\sqrt{2\pi |b|}}\int_\R g(\omega) \bar{\eta}_A(\omega) \bar{\rho}_A(t) e^{\i\omega t/b} d \omega, ~t\in \R
%$$
%\par If we take $A=\{0,1,-1,0,0,0\}$, then $\F_A$ turns out to be the classical Fourier transform. If $A=\{cos\theta, sin\theta, -sin\theta, cos\theta, 0, 0\}$, then the resulting transform is the fractional Fourier transform.\\
 \par Let $0\neq \Hi$ be a separable Hilbert space.

\begin{definition}
A sequence $\{f_k:k\in \N\}$ of elements in $\Hi$ is a frame for $\Hi$ if there exist $m,M>0$ such that
$$
m\|f\|^2\leq \sum_{k=1}^\infty |\langle f,f_k \rangle |^2\leq M\|f\|^2,\qquad ~~~ f\in \Hi.
$$
\end{definition}
\par The numbers $m,M$ are called frame bounds. If we have the right hand side inequality for a sequence in $\Hi$, then that sequence is called a Bessel sequence.

\begin{definition}
Let $\{f_k:k\in \N\}$ be a Bessel sequence in $\Hi$, then the synthesis operator $T: \ell ^2 \to \Hi$ is defined by
$$
T(\{c_k\})=\sum_{k=1}^\infty c_kf_k,\quad \{c_k\} \in \ell ^2.
$$
\par The adjoint of $T$ is given by $T^*(f)=\{\langle f, f_k \rangle \},~f\in \Hi$, called the analysis operator. Composing $T$ and $T^*$ we obtain the frame operator $$S:\Hi \to \Hi, ~ S(f)=\sum_{k=1}^\infty \langle f,f_k\rangle f_k.$$ 
The operator $S$ is invertible. Further if $\{f_k:k\in \N\}$ is a frame for $\Hi$, then $\{S^{-1}f_k:k\in \N\}$ is also a frame for $\Hi$ and it is called the canonical dual frame of the frame $\{f_k:k\in \N\}.$
\end{definition}

\begin{definition}
A sequence $\{f_k:k\in \N\}$ in $\Hi$ is said to be a Riesz basis if there exist a bounded invertible operator $T$ on $\Hi$ and an orthonormal basis $\{u_k:k\in \N\}$ of $\Hi$ such that $f_k=Tu_k,~\forall ~ k\in \N.$ The sequence $\{f_k:k\in \N\}$ is called a Riesz sequence if it is a Riesz basis for its closed linear span.
\par Equivalently $\{f_k:k\in \N \}$ is a Riesz sequence if there exist $m,M>0$ such that
\[
m\|c\|^2\leq \|\sum_{k=1}^\infty c_kf_k\|^2 \leq M\|c\|^2,~~ \text{for every finite sequence} \  \{c_k\}.
\]
\end{definition}

\begin{definition}
Let $\{f_k:k\in \N\}$ be a Riesz basis for $\Hi$. The dual Riesz basis of $\{f_k:k\in \N\}$ is the unique sequence $\{g_k:k\in \N\}$ in $\Hi$ satisfying 
\[
f=\sum_{k=1}^\infty \langle f, g_k \rangle f_k,\quad \forall f\in \Hi.
\]
\end{definition}

\begin{definition}
The Gramian associated with the Bessel sequence $\{f_k:k\in \N\}$ is an operator on $\ell ^2$ whose $jk^{th}$ entry in the matrix representation with respect to the canonical orthonormal basis is $\langle f_k,f_j\rangle .$
\end{definition}

\par It is well known that a sequence $\{f_k:k\in \N\}$ is a Riesz sequence if there exist $m,M>0$ such that its Gramian $G$ satisfy the following inequality:
$$
m\|c\|^2\leq \sum_{k\in \N} | \langle Gc_k, c_k \rangle |^2 \leq M\|c\|^2,~~\forall ~ c= \{c_k\}\in \ell ^2.
$$

\begin{definition}
A closed subspace $V$ in $L^2(\R)$ is said to be a shift invariant space if $f\in V \Rightarrow T_kf \in V,~\forall k \in \Z, ~f\in V,$ where $T_kf(t)=f(t-k).$
\par In particular, for $\phi \in L^2(\R),~V(\phi)=\overline{span}\{T_k\phi :k \in \Z\}$ is called the principal shift invariant space.
\end{definition}

\begin{definition}
A set $X=\{x_k\in \R:k\in \Z\}$ is said to be a stable set of sampling for a closed subspace $V$ of $L^2(\R)$ if there exist constants $0<m\leq M<\infty$ such that
$$
m\|f\|^2\leq \sum_{k\in \Z}|f(x_k)|^2\leq M\|f\|^2,
$$
for every $f\in V.$
\end{definition}

\begin{definition}
The Wiener amalgam space $W\big(C,\ell^p(\Z)\big)$, $1\leq p<\infty$ is defined as
\begin{equation*}
W\big(C,\ell^p(\Z)\big)=\{f\in C(\R):\sum_{k\in \Z} max_{x\in [0,1]}|f(x+k)|^p<\infty\}.
\end{equation*}
\end{definition}

%\textcolor{black}{\begin{remark}[\cite{H3}]
%Let $Bf(x)=f^\prime(x)+\frac{iax}{b}f(x)$. Then we have the following
%\begin{equation}\label{eq: Bf saft}
%\F_A(Bf)(\omega)=i\frac{\omega-p}{b}\F_A(f)(\omega)
%\end{equation}
%\end{remark}}

\section{The new translation}
In this section, we introduce $A$-translation operator in connection with the SAFT. Using $A$-translation operator, we define $A$-convolution of a regular Borel measure and a function. Further, we obtain an analogue of Wendel's theorem. Towards this end, first we extend the definition of SAFT to the space of all regular Borel measures.

%In this section, we first extend the definition of SAFT to the space of all bounded measures on $\R$. We define $A$-convolution of a measure and a function and chirp modulation $C_s\mu$ of a measure $\mu$. Using this and the function $\rho_A(x)=e^{\frac{i}{2b}(ax^2+2px)}$, we obtain a relation between classical translation and $A$-translation. We prove an analogue of Wendel's theorem for SAFT.

\begin{definition}
Let $f\in L^1(\R)$. Then the special affine Fourier transform is defined as 
\begin{equation}\label{eq:SAFT0}
\mathscr{F}_Af(\omega)=\frac{1}{\sqrt{2\pi |b|}}\int_\R f(t) e^{\frac{i}{2b}(at^2+2pt-2t\omega+d\omega^2+2(bq-dp)\omega)} d t,~  \omega\in\R,
\end{equation}
where $A$ stands for the set of six parameters $\{a,b,c,d,p,q\}\subset\R$ with $ad-bc=1$ and $b\neq 0$. 
\end{definition}
%The limit case $b\to0$ can be considered as well but it is outside the scope of the paper.\\
With the help of the following auxiliary functions
\begin{align}
    \eta_A(\omega)&=e^{\frac{i}{2b}\left(d\omega^2+2(bq-dp)\omega\right)},\\
    \rho_A(t)&=e^{\frac{i}{2b}(at^2+2pt)},
\end{align}
the SAFT can be expressed as
\begin{equation}\label{eq:SAFT1}
    \F_Af(\omega)=\frac{\eta_A(\omega)}{\sqrt{|b|}}\rho_A(f)(\omega/b)
\end{equation}
Since $|\eta_A(\omega)|=1=|\rho_A(t)|$ 
for all $t,\omega\in\R$, we immediately see from \eqref{eq:SAFT1} that $\mathscr{F}_A f\in C_0(\R)$ with $\|\mathscr{F}_Af\|_\infty \leq(2\pi |b|)^{-1/2}\, \|f\|_1$. 
Moreover, \eqref{eq:SAFT1} also shows that $\mathscr{F}_A$ can be extended to $L^2(\R)$ and defines a unitary operator on that space. In particular, 
$$
\|\mathscr{F}_Af\|_2=\|f\|_2. 
$$ 
The inverse of $\mathscr{F}_A$ on $L^2(\R)$ can also be easily determined using \eqref{eq:SAFT1} 
$$
\mathscr{F}_A^{-1}f(t)=\frac{\overline{\rho_A}(t)}{\sqrt{2\pi |b|}}\int_\R f(\omega) \overline{\eta_A}(\omega)\, e^{i \omega t/b} d\omega. 
$$
Finally, \eqref{eq:SAFT1} also provides an extension of the SAFT to $M(\R)$, the space of all complex valued bounded regular Borel measures on $\R$, equipped with the 
total variation norm. For $\mu\in M(\R)$, we have 
$$
\mathscr{F}_A\mu(\omega)= 
\frac{1}{\sqrt{2\pi |b|}}\int_\R e^{\frac{i}{2b}(at^2+2pt-2t\omega+d\omega^2+2(bq-dp)\omega)} d\mu(t) ,\,  \omega\in\R,
$$
with $\mathscr{F}_A\mu\in C_b(\R)$ and $\|\mathscr{F}_A\mu\|_\infty\leq (2\pi|b|)^{-1/2} \|\mu\|_{M(\R)}$.\\
\par We now introduce the generalized translation operator associated with SAFT. In order to do so, we fix the following notation
\begin{align}
T_xf(t)&=f(t-x). \\
 M_\omega f(t)&=e^{i\omega t}f(t)\label{eq:cmodulation}.
\end{align}

\begin{definition}
Let $x\in \R$ and $f:\R \to \R$ be a function. Then $A$-translation of $f$ by $x$, denoted by $T_x^Af$, is defined as
$$
T_x^Af(t)= T_xM_{-ax/b}f(t)= e^{-i\frac{a}{b}x(t-x)}f(t-x),~t \in \R. 
$$
\end{definition}

\par It is easy to see that $T_x^A$ is norm preserving in all spaces $L^p(\R),~1\leq p\leq \infty$ or $C_0(\R)$. \par We can relate our new translation and the classical translation in the following way, using \textcolor{black}{$\rho_A$} and the chirp modulation operator $C_\frac{a}{b},$ 
%\st{which is defined by $C_\frac{a}{b}f(x)=e^{i\frac{a}{2b}x^2}f(x)$ and $\rho_A$} 
\textcolor{black}{where
\begin{equation}
C_sf(t)=e^{i\frac{s}{2}t^2}f(t).
\end{equation}}
\begin{align}
\label{chirpTranslation} C_\frac{a}{b}T_x^Af=e^{i\frac{a}{2b}x^2}T_x(C_\frac{a}{b}f)\\
\label{rhoT_x^A}\rho_AT_x^Af=\rho_A(x)T_x(\rho_Af).
\end{align}
In fact,\begin{align*}
C_\frac{a}{b}T_x^Af(t)=e^{i\frac{a}{2b}t^2}e^{-i\frac{a}{b}x(t-x)}f(t-x)=&e^{i\frac{a}{2b}x^2}e^{i\frac{a}{2b}(t-x)^2}f(t-x)\\=&e^{i\frac{a}{2b}x^2}T_x(C_\frac{a}{b}f)(t).
\end{align*}
Similarly one can show (\ref{rhoT_x^A}).\\
\par Now, we collect the properties of $T_x^A.$ 

\begin{propn}\label{pro:SAFT1}
We have the following
\begin{itemize}
\item[(i)] $T_x^AT_y^A=e^{-i\frac{a}{b}xy}T_{x+y}^A,\quad x,y\in \R.$
\item[(ii)] $(T_x^A)^*=e^{-i\frac{a}{b}x^2}T_{-x}^A,\quad x\in \R.$
\item[(iii)] Let $\chi_\omega(t)=\overline{\rho_A}(t)\, e^{i\omega t/b}$. Then $T^A_x\chi_\omega(t)=\overline{\chi_\omega}(x)\, \chi_\omega(t)$. 
\item[(iv)] Let $f\in L^1(\R)$. Then
\begin{equation}\label{gtsaft}
\mathscr F_A (T_x^Af)(\omega)= \rho_A(x)e^{-ixw/b}\mathscr F_Af(\omega),\quad x,\omega \in \R.
\end{equation}
\end{itemize}
\end{propn}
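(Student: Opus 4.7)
All four items reduce to direct computation. The engine driving each is the factorization
\[
\rho_A(u+x)=\rho_A(u)\,\rho_A(x)\,e^{i\frac{a}{b}ux},
\]
obtained by expanding $a(u+x)^2=au^2+2aux+ax^2$ and $2p(u+x)$ in the exponent defining $\rho_A$. This identity explains why the modulation factor $e^{-i\frac{a}{b}x(t-x)}$ built into the definition of $T_x^A$ is the correct choice: it is precisely what is needed to cancel the cross term $e^{i\frac{a}{b}ux}$ that arises when one shifts the argument of $\rho_A$.

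For (i), I would apply $T_x^A$ to $T_y^Af$ at the point $t$, collect the two chirp exponents, and verify $x(t-x)+y(t-x-y)=xy+(x+y)(t-x-y)$ by a one-line expansion (both sides equal $(x+y)t-x^2-xy-y^2$). For (ii), I would change variables $u=t-x$ in $\langle T_x^Af,g\rangle$ to read off $(T_x^A)^*g(u)=e^{i\frac{a}{b}xu}g(u+x)$; expanding the definition of $T_{-x}^A$ then yields $e^{-i\frac{a}{b}x^2}T_{-x}^Ag(u)=e^{-i\frac{a}{b}x^2}e^{i\frac{a}{b}x(u+x)}g(u+x)=e^{i\frac{a}{b}xu}g(u+x)$, matching the previous expression.

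For (iii), I would substitute into the definition and apply the displayed factorization (with $t-x$ in the role of $u$, then conjugating) to collapse $e^{-i\frac{a}{b}x(t-x)}\overline{\rho_A}(t-x)$ to $\rho_A(x)\overline{\rho_A}(t)$. The remaining exponentials combine to $e^{-i\omega x/b}e^{i\omega t/b}$, so the product becomes $\rho_A(x)e^{-i\omega x/b}\chi_\omega(t)=\overline{\chi_\omega}(x)\chi_\omega(t)$, as required. For (iv), the cleanest route rewrites $\mathscr{F}_Af(\omega)=\frac{\eta_A(\omega)}{\sqrt{2\pi|b|}}\int f(t)\,\overline{\chi_\omega(t)}\,dt$, transfers $T_x^A$ off $f$ via (ii), and invokes (iii) at $-x$ to extract the eigenvalue; after a short conjugate manipulation the prefactor $\rho_A(x)e^{-ix\omega/b}$ emerges. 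A direct substitution $u=t-x$ in the defining integral for $\mathscr{F}_A(T_x^Af)$ is an equivalent alternative, and it is where the cancellation furnished by the displayed identity becomes most transparent.

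I do not anticipate any conceptual obstacle — only a clerical one: keeping conjugates straight (in particular the identity $\overline{\chi_\omega}(x)=\rho_A(x)e^{-i\omega x/b}$) and ensuring that no spurious factor involving $p$ is left uncancelled during the manipulations.
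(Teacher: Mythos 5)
Your computations are correct, and since the paper dismisses this proposition with ``The proof is straightforward,'' your direct verification via the factorization $\rho_A(u+x)=\rho_A(u)\rho_A(x)e^{i\frac{a}{b}ux}$ is exactly the argument the authors intend. All four identities check out, including the sign bookkeeping in (ii) and the cancellation of the $p$-dependent factors in (iii) and (iv).
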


\begin{proof}
The proof is straightforward.
\end{proof}
%\begin{itemize}
%\item[(i)] Consider
%\begin{align*}
%T_x^AT_y^A &= T_xM_{-\frac{a}{b}x}T_yM_{-\frac{a}{b}y} = T_xe^{-i\frac{a}{b}xy}T_yM_{-\frac{a}{b}x}M_{-\frac{a}{b}y}\\
%&= e^{-i\frac{a}{b}xy}T_xT_yM_{-\frac{a}{b}x}M_{-\frac{a}{b}y}=e^{-i\frac{a}{b}xy}T_{x+y}M_{-\frac{a}{b}(x+y)}.
%\end{align*}
%\item[(ii)] The adjoint of $T_x^A$ is given by
%\begin{align*}
%(T_x^A)^* &=(T_xM_{-\frac{a}{b}x})^*= M_{\frac{a}{b}x}T_{-x}=e^{-i\frac{a}{b}x^2}T_{-x}M_{\frac{a}{b}x}.
%\end{align*}
%\item[(iii)]We note that $\rho_A(x)\bar{\rho}_A(t)\rho_A(t-x)=e^{-i\frac{a}{b}x(t-x)}$. The 
%statement\\  follows easily from this relation.
%\item[(iv)]We combine \eqref{rhoT_x^A} with \eqref{eq:SAFT1} and obtain 
%\begin{align*}
%\F_A(T_x^Af)(\omega) &= \F_A(\rho_A(x)\,\bar{\rho}_A\, T_x(\rho_A f))(\omega)\\
%&= \frac{\eta_A(\omega)}{\sqrt{|b|}}\, \rho_A(x)\, \F(T_x(\rho_A f))(\omega/b)\\
%%&= \frac{\eta_A(\omega)}{\sqrt{2\pi |b|}}\int_\R f(t-x)e^{\frac{i}{2b}(a(t-x)^2-2(t-x)\omega +2p(t-x))}\\ &\times e^{\frac{i}{2b}(ax^2+2px-2\omega x)}dt\\
%&= \rho_A(x)e^{-\frac{i}{b}\omega x}\F_A(f)(\omega).
%\end{align*}
%
%\end{itemize}
%\end{proof}

\par It is interesting to note that the map $x\mapsto T_x$ is a group representation, whereas from Proposition \ref{pro:SAFT1} (i) it follows that $x\mapsto T_x^A$ is just a projective representation in general, which shows that the new translation is fundamentally different from that of the classical translation. Proposition \ref{pro:SAFT1} (iii) is what is known in harmonic analysis as a product formula. The relation (\ref{gtsaft}) extends to functions $f\in L^2(\R)$ as well. For those functions we have in particular 
\begin{equation}\label{gtsaft2}
T_x^Af=\F_A^{-1}(\rho_A(x)e^{-ix\cdot /b}\F_Af),
\end{equation}
where the equality \textcolor{black}{holds in the sense of $L^2(\R)$ functions.}\\
\begin{definition}
Let $\mu\in M(\R)$ and $s\in \R$, then $C_s\mu$ is defined by $d(C_s\mu)(x)=e^{\frac{i}{2}sx^2}d\mu(x).$ 
\end{definition}
Clearly, $C_s\mu\in M(\R).$ Similarly, we can define $\rho_A\mu$ as $d(\rho_A\mu)(x)=\rho_A(x)d\mu(x)$. 
\par Using the $A$-translation, we define the $A$-convolution of  $\mu\in M(\R)$ and $f\in L^1(\R)$ as 
\begin{equation}\label{eq:SAFT4}
(\mu\star_A f)(t) =\frac{1}{\sqrt{2\pi |b|}} \int_\R T^A_sf(t)\, d\mu(s).
\end{equation}

The integral in (2.9) can also be viewed as a vector-valued integral as follows.
\begin{equation}\label{eq:translation vector valued}
\mu\star_Af=\frac{1}{\sqrt{2\pi |b|}}\int_\R T_s^Afd\mu(s),
\end{equation}
where the right hand side is a Bochner integral. The convergence of the integral follows from
\[
\int_\R \|T_s^Af\|_1d\mu(s)=\|f\|_1\mu(\R)<\infty.
\]
%gives the convergence of the integral.

Now we give a relation between classical convolution and $A$-convolution of a measure and a function. Consider
\begin{align*}
(\mu \star_A f)(x)&=\frac{1}{\sqrt{2\pi |b|}}\int_\R T_s^Af(x)d\mu(s)\\
&=\frac{1}{\sqrt{2\pi |b|}}\int_\R e^{i\frac{a}{2b}s^2}C_\frac{a}{b}^{-1}T_sC_\frac{a}{b}f(x)d\mu(s)\\
&=\frac{e^{-i\frac{a}{2b}x^2}}{\sqrt{2\pi |b|}}\int_\R T_sC_\frac{a}{b}f(x)e^{i\frac{a}{2b}s^2}d\mu(s)\\
&=\frac{e^{-i\frac{a}{2b}x^2}}{\sqrt{2\pi |b|}}\int_\R T_sC_\frac{a}{b}f(x)d(C_\frac{a}{b}\mu)(s)\\
&=\frac{e^{-i\frac{a}{2b}x^2}}{\sqrt{|b|}}(C_\frac{a}{b}\mu \star C_\frac{a}{b}f)(x),
\end{align*}
using (\ref{chirpTranslation}), which in turn implies that 
\begin{equation}\label{eq:A-convolution}
C_\frac{a}{b}(\mu\star_Af)=\frac{1}{\sqrt{|b|}}(C_\frac{a}{b}\mu\star C_\frac{a}{b}f).
\end{equation}
Further, using \eqref{rhoT_x^A}, one can show that
\begin{equation}\label{eq:convolutions using rho}
\rho_A(\mu \star_A f)=\frac{1}{\sqrt{|b|}}(\rho_A\mu) \star (\rho_Af).
\end{equation}
The convolution theorem for the SAFT reads as follows. 
\begin{propn}\label{pro:SAFT2}
Let $\mu\in M(\R)$ and $f\in L^1(\R)$. Then 
\begin{equation}\label{eq:SAFT5}
\mathscr{F}_A(\mu\star_A f)(\omega) = \overline{\eta_A}(\omega)\mathscr{F}_A\mu(\omega)\, \mathscr{F}_Af(\omega).
\end{equation}

\begin{proof}
\textcolor{black}{The proof is straightforward using the operator $\rho_A$.}
%We make use of \eqref{eq:convolutions using rho}. Let $\mu\in M(\R), f\in L^1(\R)$. Then  
%\begin{align*}
%\mathscr{F}_A(\mu\star_A f)(\omega)
%=&\frac{\eta_A(\omega)}{\sqrt{|b|}}\mathscr{F}(\rho_A[\mu\ast_A  f])(\omega/b)\\
%=&\frac{\eta_A(\omega)}{|b|}\mathscr{F}\big((\rho_A\mu)\star(\rho_A g)\big)(\omega/b)\\
%=&\frac{\eta_A(\omega)}{|b|}\mathscr{F}(\rho_A\mu)(\omega/b)\ \mathscr{F}(\rho_Af)(\omega/b)\\
%=&\bar{\eta}_A(\omega)\ \mathscr{F}_A\mu(\omega)\ \mathscr{F}_Af(\omega),\\
%%=&\frac{\eta_A(\omega)}{\sqrt{2\pi |b|}}\int_\R\int_\R f(t-s)\, e^{-i\frac{a}{b} s(t-s)} e^{\frac{i}{2b}(at^2-2t\omega+2pt)}\, d t\, d\mu(s)  \\
%%=&\frac{\eta_A(\omega)}{\sqrt{2\pi |b|}}\int_\R e^{\frac{i}{2b}(as^2+2ps-2\omega s)}\\
%%  & \times \int_\R f(t-s)\,  e^{\frac{i}{2b}(a(t-s)^2-2(t-s)\omega+2p(t-s))}\, d t\,  d\mu(s)\\
%%=&\bar{\eta}_A(\omega)\frac{\eta_A(\omega)}{\sqrt{2\pi |b|}}\int_\R e^{\frac{i}{2b}(as^2+2ps-2\omega s)}\, d\mu(s)\\ 
%%   &\times \frac{\eta_A(\omega)}{\sqrt{2\pi |b|}} \int_\R f(t-s)\,  e^{\frac{i}{2b}(a(t-s)^2-2(t-s)\omega+2p(t-s))}\, d t\\
%%=&\bar{\eta}_A(\omega)\, \mathscr{F}_A \mu (\omega)\, \mathscr{F}_A f(\omega),            
%\end{align*}
%which proves \eqref{eq:SAFT5}. 
%from which \eqref{eq:SAFT6} easily follows.
\end{proof}
\end{propn}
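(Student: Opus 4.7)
The plan is to apply Proposition \ref{pro:SAFT1}(iv) after interchanging the SAFT integral with the integral against $\mu$ that defines $\mu\star_A f$. Starting from the vector-valued representation \eqref{eq:translation vector valued}, the bound $\int_\R\|T^A_sf\|_1\,d|\mu|(s)=\|f\|_1\,|\mu|(\R)<\infty$ recorded just after \eqref{eq:translation vector valued}, together with $|K_A(t,\omega)|=1$ for the SAFT kernel $K_A$, makes Fubini's theorem applicable and yields
\[
\F_A(\mu\star_A f)(\omega)=\frac{1}{\sqrt{2\pi|b|}}\int_\R \F_A(T^A_s f)(\omega)\,d\mu(s).
\]

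Next I would substitute the transformation rule from Proposition \ref{pro:SAFT1}(iv), namely $\F_A(T^A_s f)(\omega)=\rho_A(s)\,e^{-is\omega/b}\,\F_A f(\omega)$, and pull the $s$-independent factor $\F_A f(\omega)$ outside the integral. The remaining $s$-integral
\[
\frac{1}{\sqrt{2\pi|b|}}\int_\R\rho_A(s)\,e^{-is\omega/b}\,d\mu(s)
\]
is recognized as $\overline{\eta_A}(\omega)\,\F_A\mu(\omega)$ by inspecting the definition of $\F_A\mu$: factoring the purely $\omega$-dependent phase $\eta_A(\omega)$ out of the SAFT kernel applied to a measure leaves precisely the above integral, and $|\eta_A(\omega)|=1$ lets us invert the relation. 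Combining these two observations yields the claimed identity.

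The only technical point requiring care is the Fubini interchange, which as just explained is immediate from the $L^1$-bound noted in the paper. An alternative route, which is the one hinted at by the phrase \emph{using the operator $\rho_A$}, is to apply the classical Fourier transform to \eqref{eq:convolutions using rho} and invoke the classical convolution theorem for a measure and an $L^1$-function, then convert back using the representation $\F_A g(\omega)=\eta_A(\omega)|b|^{-1/2}\,\widehat{\rho_A g}(\omega/b)$ that is implicit in \eqref{eq:SAFT1}; this reduces the SAFT convolution theorem to its classical counterpart after conjugation by $\rho_A$, and the factor $\overline{\eta_A}(\omega)$ appears precisely to compensate the quadratic $\omega$-phase that $\F_A\mu\cdot\F_A f$ carries twice.
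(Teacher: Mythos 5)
Your proposal is correct. The paper's proof is literally the one-line remark that the identity is ``straightforward using the operator $\rho_A$,'' which is exactly the conjugation route you describe in your final paragraph (apply the classical convolution theorem for a measure and an $L^1$ function to \eqref{eq:convolutions using rho} and translate back via \eqref{eq:SAFT1}); your primary route via Fubini and Proposition \ref{pro:SAFT1}(iv) is an equally valid direct verification of the same identity.
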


\textcolor{black}{
%\begin{remark}
In particular, if $\mu=g\, d t,$ for some $g\in L^1(\R)$ then 
%\begin{equation}\label{eq:SAFT6}
\[\mathscr{F}_A(g\star_A f)(\omega) = \overline{\eta_A}(\omega)\mathscr{F}_Ag(\omega)\, \mathscr{F}_Af(\omega).\]
See [6] for more details.
%\end{equation}
%\end{remark}
}

%\textcolor{red}{\begin{remark}
%The equation \eqref{eq:SAFT6} has been proved in \cite{Bhandari}, which is based on explicit computation. However, we made use of the operator $\rho_A$ and quickly got the result using the classical convolution theorem.
%\end{remark}}
%\par \textcolor{red}{Suppose we take $A_\theta =\{cos\theta, sin\theta,-sin\theta, cos\theta, 0,0\}$. Then $\F_{A_\theta}$ turns out to be the fractional Fourier transform. In this case
%$$
%(f\star_{A_\theta} g) (t)=\frac{1}{\sqrt{2\pi |sin\theta|}} \int_\R e^{-icot(\theta )s(t-s)}f(t-s)g(s)ds,
%$$ 
%for $f,g\in L^1(\R)$. In a similar way, we can take $A_\lambda=\{1,\lambda,0,1,0,0\}$. Using these parameters, we can obtain convolution theorem for the fractional Fourier transform and Fresnel transform respectively, which are stated below. These are particular cases of Theorem 1 in \cite{Bhandari}.}
%%\end{remark}}
%\begin{theorem}
%Let $f,g\in L^1(\R)$. Then
%\begin{itemize}
%\item[(a)]$
%\F_{A_\theta}(f\star_{A_\theta} g)(\omega)=e^{\frac{i}{2}cot(\theta)\omega^2}\F_{A_\theta} f(\omega)\F_{A_\theta} g(\omega).
%$
%\item[(b)] $\F_{A_\lambda}(f\star_{A_\lambda}f)(\omega)=e^{\frac{i}{2\lambda}\omega^2}\F_{A_\lambda}(f)(\omega)\F_{A_\lambda}(g)(\omega)$.
%\end{itemize}
%\end{theorem}

\par \textcolor{black}{The concept of chirp modulation was used in \cite{Bhandari} to define a chirp convolution and to get sampling theorems. Although the $A$-translation is somehow included implicitly in the definition of the chirp convolution, it has not been used to its full extent in \cite{Bhandari} and hence the harmonic analysis of the special affine Fourier transform has not been developed. However we want to make use of our new translation from harmonic analysis point of view. Towards this end we first we prove an analogue of Wendel's theorem for the SAFT.}
%show that $\big(L^1(\R),\star_A\big)$ possesses a bounded approximate identity as in the classical case $(L^1(\R),\star)$.

%Further it follows from (\ref{eq:SAFT5}) that for $f,g\in L^1(\R)$,
%\begin{equation}
%\mathscr{F}_A(f\star_A g)(\omega) = \bar{\eta}_A(\omega)\mathscr{F}_Af(\omega)\, \mathscr{F}_Ag(\omega).
%\end{equation}

\par We are now in a position to state one of our main results. The following statement is an analogue of Wendel's theorem in the context of the SAFT. 
\begin{theorem}[Wendel] \label{Wendel}
Let $T:L^1(\R)\to L^1(\R)$ be a bounded linear operator. Then the following statements are equivalent.
\begin{itemize}
\item[(i)] $TT_s^A=T_s^AT,$ for all $s\in \R$.
\item[(ii)] $T(f\star_A g)=Tf\star _A g=f\star_A Tg$, for all $f, g \in L^1(\R).$
\item[(iii)] There exists a unique $\mu \in M(\R)$ such that $Tf=\mu \star_A f$.
\item[(iv)] There exists a unique $\mu \in M(\R)$ such that $$\F_A(Tf)(\omega)=\overline{\eta_A}(\omega)\F_A\mu(\omega)\F_Af(\omega).$$
\item[(v)] There exists a unique $\phi \in L^\infty (\R)$ such that $\F_A(Tf)(\omega)=\phi(\omega)\F_Af(\omega).$
\end{itemize}
\end{theorem}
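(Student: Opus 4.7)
The plan is to establish the five conditions via the cycle (iii)$\Rightarrow$(iv)$\Rightarrow$(v)$\Rightarrow$(i)$\Rightarrow$(iii), together with the side equivalence (ii)$\Leftrightarrow$(iii). The key conceptual input, driving the only non-trivial direction (i)$\Rightarrow$(iii), is that the chirp multiplier $C_{a/b}$ conjugates $A$-translation into a unimodular multiple of ordinary translation, as expressed in (\ref{chirpTranslation}). Consequently, the whole theorem reduces, after a routine bookkeeping of chirp factors and the $\sqrt{|b|}$ normalization from (\ref{eq:A-convolution}), to the classical Wendel theorem \cite{larsen}.

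For the easy implications: (iii)$\Rightarrow$(iv) is precisely Proposition \ref{pro:SAFT2}; (iv)$\Rightarrow$(v) is immediate with $\phi:=\overline{\eta_A}\F_A\mu$, which lies in $L^\infty(\R)$ since $|\eta_A|\equiv 1$ and $\F_A\mu\in C_b(\R)$. For (v)$\Rightarrow$(i), Proposition \ref{pro:SAFT1}(iv) gives
\[
\F_A(TT_x^Af)(\omega)=\phi(\omega)\rho_A(x)e^{-ix\omega/b}\F_Af(\omega)=\rho_A(x)e^{-ix\omega/b}\F_A(Tf)(\omega)=\F_A(T_x^ATf)(\omega),
\]
whence $TT_x^A=T_x^AT$ by injectivity of $\F_A$ on $L^1(\R)$, which follows from (\ref{eq:SAFT1}) and injectivity of the classical Fourier transform on $L^1$.

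For the main step (i)$\Rightarrow$(iii), set $\widetilde T:=C_{a/b}\,T\,C_{a/b}^{-1}$; since $C_{a/b}$ is an isometric bijection on $L^1(\R)$ (multiplication by a unimodular function), $\widetilde T$ is a bounded linear operator on $L^1(\R)$. Using (\ref{chirpTranslation}) to rewrite $T_x^A=e^{iax^2/(2b)}C_{a/b}^{-1}T_xC_{a/b}$, a direct calculation shows that (i) is equivalent to $\widetilde T T_x=T_x\widetilde T$ for every $x\in\R$. The classical Wendel theorem then furnishes a unique $\nu\in M(\R)$ with $\widetilde T g=\nu\star g$, and unwinding the conjugation via (\ref{eq:A-convolution}) produces the sought-after $\mu\in M(\R)$ (determined by $\nu$ up to the $\sqrt{|b|}$ factor) with $Tf=\mu\star_A f$; uniqueness of $\mu$ is inherited from that of $\nu$. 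Finally, for (ii)$\Leftrightarrow$(iii): associativity and commutativity of $\star_A$ follow immediately from (\ref{eq:A-convolution}) combined with the classical facts (or directly from Proposition \ref{pro:SAFT1}(i)), giving (iii)$\Rightarrow$(ii); conversely, one substitutes into (ii) an approximate $A$-identity $g_n=\sqrt{|b|}\,C_{a/b}^{-1}h_n$ obtained from a classical approximate identity $\{h_n\}$ so that $g_n\star_A f\to f$ in $L^1(\R)$, and combines this with the easily verified identity $(T_x^A g)\star_A f=T_x^A(g\star_A f)$ to conclude (i). The main obstacle is the elementary but delicate bookkeeping of unimodular prefactors and the $\sqrt{|b|}$ normalization under the chirp conjugation; once that is organized, the classical Wendel theorem does all the structural work.
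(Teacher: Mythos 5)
Your proof is correct and follows essentially the same route as the paper: conjugate $T$ by a unimodular multiplier to reduce the hard implications to the classical Wendel theorem, use the convolution theorem for (iii)$\Rightarrow$(iv)$\Rightarrow$(v), and close the loop with the direct SAFT computation for (v)$\Rightarrow$(i). The only cosmetic differences are that the paper conjugates by $\rho_A$ (via \eqref{rhoT_x^A}) rather than by the pure chirp $C_{a/b}$, and it transports statement (ii) through the same conjugation to the classical setting instead of handling (ii)$\Leftrightarrow$(iii) separately with an approximate identity.
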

\begin{proof}
Let $E_{\rho_A}f(t)=\rho_A(t) f(t)$ and define $\tilde{T}:L^1(\R)\to L^1(\R)$ by $\tilde{T}=E_{\rho_A}\,T\,E_{\overline{\rho_A}}$.
Then using \eqref{rhoT_x^A} we get 
\begin{align*}
T\, T^A_x=T\,E_{\overline{\rho_A}}E_{\rho_A}T_x^A=&\rho_A(x)\,T\,E_{\overline{\rho_A}}\, T_x\, E_{\rho_A}=\rho_A(x)\, E_{\overline{\rho_A}}\, \tilde{T}\, T_x\, E_{\rho_A},
\end{align*}
which shows that $T\,T^A_x=T^A_x\, T$ iff $\tilde{T}\, T_x=T_x\, \tilde{T}$. Similarly we can show that $T(f\star_A g)=Tf\star_A g$ iff $\tilde{T}(f\star g)=\tilde{T}f\star g$, 
and $Tf=\mu\star_A\,f$ iff $\tilde{T}\,f=\frac{1}{\sqrt{|b|}} (E_{\rho_A}\mu)\star f$. The equivalence of (i),(ii), and (iii) now follows from Wendel's theorem in the classical case. That (iii) implies (iv) 
and (iv) implies (v) is obvious. To show that (i) follows from (v), let $f\in L^1(\R)$. For $s\in\R$ we have 
\begin{align*}
\mathscr{F}_A(T\,T^A_s\,f)(\omega)&=\phi(\omega)\,\mathscr{F}_A(T^A_s\, f)(\omega)\\[1ex]
&=\phi(\omega)\,\rho_A(\omega)\, e^{-i\,s\omega/b}\,\mathscr{F}_Af(\omega)\\[1ex]
&=\rho_A(\omega)\, e^{-i\,s\omega/b}\,\mathscr{F}_A(T\,f)(\omega)=\mathscr{F}_A(T^A_s\,T\,f)(\omega).
\end{align*}

\end{proof}

\par \textcolor{black}{We end this section by establishing an analogue of the Poisson summation formula for the SAFT and the corresponding $A$-translation. From now on, \textcolor{black}{we use the following notation. 
\[
I:=[-|b|\pi,|b|\pi].
\]}}
%the interval $[-|b|\pi,|b|\pi]$ will be denoted by $I$.
\begin{theorem}
Let $f\in L^1(\R)\cap L^2(\R)$. Then  the following formula holds. 
\[
\sqrt{2\pi |b|}\sum_{k\in \Z} T_{-2kb\pi}^Af(t)e^{-2iabk^2\pi^2}=
\sum_{n\in \Z}\overline{\eta_A}(n+p)\F_Af(n+p)e^{-\frac{i}{2b}(at^2-2nt)}, \quad t\in \R.
\]
\end{theorem}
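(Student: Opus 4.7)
The plan is to reduce the identity to the classical Poisson summation formula by means of the ``dechirping'' function $\rho_A$, which intertwines the $A$-translation with ordinary translation through the identity $\rho_A T^A_x f=\rho_A(x)\,T_x(\rho_A f)$ from \eqref{rhoT_x^A} and intertwines $\F_A$ with the ordinary Fourier transform through \eqref{eq:SAFT1}. First, I would rewrite $T^A_{-2kb\pi}f(t)=\overline{\rho_A}(t)\,\rho_A(-2kb\pi)\,(\rho_A f)(t+2kb\pi)$. A direct evaluation gives $\rho_A(-2kb\pi)=e^{2iabk^2\pi^2}\,e^{-2\pi i k p}$, so the factor $e^{-2iabk^2\pi^2}$ on the left-hand side of the claim exactly cancels the quadratic chirp from $\rho_A(-2kb\pi)$ and leaves
\[
T^A_{-2kb\pi}f(t)\,e^{-2iabk^2\pi^2}=\overline{\rho_A}(t)\,e^{-2\pi i k p}\,(\rho_A f)(t+2kb\pi).
\]

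Next, with $g:=\rho_A f$, the twisted periodic sum $\sum_k e^{-2\pi i k p}\,g(t+2kb\pi)$ is precisely the classical Poisson periodization of the \emph{modulated} function $g_p(t):=e^{-ipt/b}g(t)$ at period $L=2\pi|b|$ (I treat $b>0$; the case $b<0$ follows by reindexing). Applying classical Poisson summation to $g_p$ and using the modulation rule $\widehat{g_p}(\xi)=\widehat{g}(\xi+p/b)$ yields
\[
\sum_{k\in\Z} e^{-2\pi i k p}\,g(t+2kb\pi)=\frac{1}{\sqrt{2\pi}\,b}\sum_{n\in\Z}\widehat{g}\!\left(\tfrac{n+p}{b}\right)e^{i(n+p)t/b}.
\]
From \eqref{eq:SAFT1} and $|\eta_A|\equiv 1$ we read off $\widehat{g}((n+p)/b)=\sqrt{|b|}\,\overline{\eta_A}(n+p)\,\F_A f(n+p)$.

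Finally, multiplying the previous identity by $\sqrt{2\pi|b|}\,\overline{\rho_A}(t)$, the normalization constants collapse to $1$, and the remaining phase simplifies directly:
\[
\overline{\rho_A}(t)\,e^{i(n+p)t/b}=e^{-\frac{i}{2b}(at^2+2pt)}\,e^{i(n+p)t/b}=e^{-\frac{i}{2b}(at^2-2nt)},
\]
which matches the summand on the right-hand side of the claim.

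I expect the main obstacle to be the careful bookkeeping of the chirp phases together with the $\sqrt{2\pi|b|}$ and $\sqrt{|b|}$ normalizations, so that they cancel precisely; everything else is algebra. A secondary issue is justifying the classical Poisson summation applied to $g=\rho_A f$: the hypothesis $f\in L^1(\R)\cap L^2(\R)$ alone does not in general guarantee pointwise convergence of both sides, so one should tacitly assume enough decay (for instance that $\rho_A f$ and $\widehat{\rho_A f}$ lie in a suitable Wiener amalgam class) or interpret the equality in a weaker, e.g.\ distributional, sense.
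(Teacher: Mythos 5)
Your proof is correct, and all the phase and normalization bookkeeping checks out: $\rho_A(-2kb\pi)=e^{2iabk^2\pi^2}e^{-2\pi ikp}$, the twisted periodization of $g=\rho_A f$ equals $e^{ipt/b}$ times the ordinary periodization of $e^{-ipt/b}g$, and $\widehat{\rho_A f}\big(\tfrac{n+p}{b}\big)=\sqrt{|b|}\,\overline{\eta_A}(n+p)\F_A f(n+p)$ makes the constants collapse. The paper itself gives no proof (it refers to Bhandari--Zayed), but your reduction via $\rho_A$-conjugation to the classical Poisson summation formula is exactly the technique the paper uses for its other results (the convolution theorem, Wendel's theorem, the approximate identity), so this is the intended argument. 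Your closing caveat is also well taken: under the stated hypothesis $f\in L^1(\R)\cap L^2(\R)$ the identity is only guaranteed in an $L^1_{\mathrm{loc}}$/Fourier-series sense rather than pointwise for every $t$, so the theorem as stated tacitly assumes additional decay or a weaker interpretation of the equality.
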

\textcolor{black}{We refer to \cite{Bhandari} for the proof.}
%\begin{proof}
%As the function $F(t)=\sum_k f(t+2kb\pi)e^{i\frac{a}{2b}(t+2kb\pi)^2}$ is $2\pi b$ periodic, we can write $F(t)=\frac{1}{\sqrt{2\pi |b|}}\sum_n\hat{F}(n)e^{\frac{i}{b}nt}$. Further $\hat{F}(n)$ and $\F_Af(n+p)$ are related as follows:
%\begin{align*}
%\F_Af(n+p)&=\frac{\eta_A(n+p)}{\sqrt{2\pi |b|}}\int_\R f(t)e^{\frac{i}{2b}(at^2-2(n+p)t+2pt)}dt\\
%&=\frac{\eta_A(n+p)}{\sqrt{2\pi |b|}}\int_\R f(t)e^{i\frac{a}{2b}t^2}e^{-\frac{i}{b}nt}dt\\
%&= \frac{\eta_A(n+p)}{\sqrt{2\pi |b|}}\int_I \sum_{k\in \Z} f(t+2kb\pi)e^{i\frac{a}{2b}(t+2kb\pi)^2}e^{-\frac{i}{b}nt}dt\\
%&=\eta_A(n+p)\hat{F}(n), \quad \text{for}~ n\in \Z.
%\end{align*}
%On the other hand $F$ can be expressed as follows
%\begin{align*}
%F(t)&=\sum_{k\in \Z} f(t+2kb\pi)e^{i\frac{a}{2b}(t^2+4kb\pi t+4k^2b^2\pi^2)}\\
%&= e^{i\frac{a}{2b}t^2}\sum_{k\in \Z} f(t+2kb\pi)e^{\frac{i}{b}2akb\pi(t+2kb\pi)}e^{-\frac{i}{b}2ak^2b^2\pi^2}\\
%&= e^{i\frac{a}{2b}t^2}\sum_{k\in \Z} T_{-2kb\pi}^Af(t)e^{-2iabk^2\pi^2},
%\end{align*}
%from which the required result follows.
%\end{proof}
%\end{theorem}

\section{Closed ideals in $(L^1(\R),\star_A)$}
\textcolor{black}{We have seen that $$C_{\frac{a}{b}}(f\star_Ag)=\frac{1}{\sqrt{|b|}}(C_\frac{a}{b}f\star C_\frac{a}{b}g).$$ Thus it is easy to see that $\| g\star_A f\|_1\leq (2\pi|b|)^{-1/2} \|g\|_1\, \|f\|_1$, from which it follows that $(L^1(\R), \star_A)$ is a commutative Banach algebra. In this section, we aim to study the closed ideals in $(L^1(\R), \star_A)$. Towards this end, first we show that $\big(L^1(\R),\star_A\big)$ possesses a bounded approximate identity as in the classical case $(L^1(\R),\star)$.}

\begin{theorem}\label{theo:SAFT1}
The space $(L^1(\R),\star_A)$ possesses a bounded approximate identity.
\end{theorem}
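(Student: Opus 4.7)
The plan is to transport a classical bounded approximate identity through the chirp modulation $C_{a/b}$, exploiting the intertwining identity \eqref{eq:A-convolution}
\[
C_\frac{a}{b}(g\star_A f)=\frac{1}{\sqrt{|b|}}\bigl(C_\frac{a}{b}g\bigr)\star\bigl(C_\frac{a}{b}f\bigr)
\]
established in Section~2. This identity effectively turns the Banach algebra $\bigl(L^1(\R),\star_A\bigr)$ into a scaled isometric copy of the classical convolution algebra $\bigl(L^1(\R),\star\bigr)$, so an approximate identity for the latter can be pushed back through $C_{-a/b}$.

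First, I fix a standard bounded approximate identity $\{e_n\}_{n\in\N}\subset L^1(\R)$ for $(L^1(\R),\star)$; for instance, pick a nonnegative $\psi\in L^1(\R)$ with $\int_\R\psi(t)\,dt=1$ and set $e_n(t)=n\psi(nt)$. I then define
\[
u_n:=\sqrt{|b|}\,C_{-\frac{a}{b}}e_n,\qquad\text{so that }\ C_\frac{a}{b}u_n=\sqrt{|b|}\,e_n.
\]
Since the symbol of $C_{-a/b}$ has modulus one, $\|u_n\|_1=\sqrt{|b|}\,\|e_n\|_1$, and therefore $\{u_n\}$ is a bounded sequence in $L^1(\R)$.

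Next, I verify that $u_n\star_A f\to f$ in $L^1(\R)$ for every $f\in L^1(\R)$. Applying the intertwining identity to the function measure $d\mu_n=u_n\,dt$ gives
\[
C_\frac{a}{b}(u_n\star_A f)=\frac{1}{\sqrt{|b|}}\bigl(C_\frac{a}{b}u_n\bigr)\star\bigl(C_\frac{a}{b}f\bigr)=e_n\star C_\frac{a}{b}f.
\]
Since $C_{a/b}f\in L^1(\R)$ and $\{e_n\}$ is a classical bounded approximate identity, the right-hand side converges to $C_{a/b}f$ in $L^1$. Because $C_{-a/b}$ is an $L^1$-isometry, applying it to both sides produces $u_n\star_A f\to f$ in $L^1(\R)$, completing the argument.

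The only real obstacle is bookkeeping the normalization factor $\sqrt{|b|}$ and verifying that the intertwining identity \eqref{eq:A-convolution}, originally stated for general measures $\mu\in M(\R)$, specializes correctly to the absolutely continuous case $d\mu=u_n\,dt$; this is immediate from the definition \eqref{eq:SAFT4} of the $A$-convolution. Apart from this cosmetic point, the result is reduced to the well-known existence of a bounded approximate identity in $(L^1(\R),\star)$.
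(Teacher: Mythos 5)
Your proof is correct and follows essentially the same route as the paper: both transport a classical bounded approximate identity through a unimodular chirp multiplier with the $\sqrt{|b|}$ normalization, the paper using $\rho_A$ and identity \eqref{eq:convolutions using rho} where you use $C_{a/b}$ and identity \eqref{eq:A-convolution}. These two intertwining relations differ only by the harmless extra factor $e^{ipt/b}$, so the arguments are interchangeable.
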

\begin{proof}
Let $\{g_\alpha\}$ be a bounded approximate identity in $(L^1(\R), \star )$ and define $u_\alpha=\sqrt{|b|}\, \overline{\rho_A}\, g_\alpha$. Then using 
\eqref{eq:convolutions using rho}, we get 
\begin{align*}
\|f\star_A u_\alpha-f\|_1=\|\rho_A(f\star_Au_\alpha)-\rho_Af\|_1&=\Big\|{\textstyle\frac{1}{\sqrt{|b|}}}\big((\rho_Af)\star(\rho_A u_\alpha)\big)-f\Big\|_1\\&=\|(\rho_Af)\star g_\alpha-\rho_Af\|_1\to 0
\end{align*}
as $\alpha\to\infty$. Hence $\{u_\alpha\}$ is a bounded approximate identity for $(L^1(\R),\star_A)$.

\end{proof}

\textcolor{black}{Now, we aim to study $A$-translation invariant closed ideals in $(L^1(\R),\star_A)$. First we prove the following theorem in this context.}

\textcolor{black}{\begin{theorem}\label{th: ideal}
Let $J$ be a closed subspace of $L^1(\R)$. Then $J$ is an ideal in $(L^1(\R),\star_A)$ if and only if it is invariant under $A$-translations. 
\end{theorem}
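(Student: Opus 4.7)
The plan is to reduce the theorem to its classical counterpart via the same device used in the proof of Theorem~\ref{Wendel}, namely the isometric bijection $E_{\rho_A}\colon L^1(\R)\to L^1(\R)$, $E_{\rho_A}(f)=\rho_A f$. Since $|\rho_A|\equiv 1$, this map is norm preserving with inverse $E_{\overline{\rho_A}}$, and so carries closed subspaces to closed subspaces. The identity \eqref{eq:convolutions using rho} shows that $E_{\rho_A}$ is, up to the scalar $1/\sqrt{|b|}$, an algebra isomorphism between $(L^1(\R),\star_A)$ and the classical $(L^1(\R),\star)$, while \eqref{rhoT_x^A} gives
\[
E_{\rho_A}(T_s^A f)\;=\;\rho_A(s)\, T_s\bigl(E_{\rho_A}(f)\bigr),
\]
so $E_{\rho_A}$ conjugates $A$-translation to a unimodular multiple of the ordinary translation.

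Setting $\widetilde{J}:=E_{\rho_A}(J)$, the first intertwining identity yields that $J$ is a closed ideal in $(L^1(\R),\star_A)$ if and only if $\widetilde{J}$ is a closed ideal in $(L^1(\R),\star)$. Since $|\rho_A(s)|=1$, the second intertwining identity yields that $J$ is $A$-translation invariant if and only if $\widetilde{J}$ is ordinary-translation invariant. The statement then reduces at once to the classical fact that a closed subspace of $(L^1(\R),\star)$ is an ideal if and only if it is translation invariant, which is standard (see e.g. Rudin's \emph{Functional Analysis}).

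For a self-contained argument I would also give the two directions directly. The easy direction, $A$-translation invariance implies ideal property, is immediate from the vector-valued representation \eqref{eq:translation vector valued}: for $f\in J$ and $g\in L^1(\R)$, the Bochner integral $g\star_A f=(2\pi|b|)^{-1/2}\int_\R g(s)\,T_s^A f\,ds$ lies in the closed subspace $J$ because each integrand $T_s^A f$ does. For the converse, I would use the bounded approximate identity $\{u_\alpha\}$ supplied by Theorem~\ref{theo:SAFT1}. A short change of variables (or an appeal to commutativity combined with Proposition~\ref{pro:SAFT1}(i)) yields the key commutation relation $T_s^A(u_\alpha\star_A f)=(T_s^A u_\alpha)\star_A f$, which belongs to $J$ because $J$ is an ideal. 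Passing to the limit $\alpha\to\infty$ and using that $T_s^A$ is an isometry of $L^1(\R)$ shows that $T_s^A f\in J$.

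The only mildly delicate point is that $s\mapsto T_s^A$ is merely a projective representation, by Proposition~\ref{pro:SAFT1}(i), so care is needed in verifying the identity $T_s^A(g\star_A f)=(T_s^A g)\star_A f$: one has to see that the $2$-cocycle phases arising from composing $T^A$'s cancel correctly. This is, however, a direct computation once both sides are expanded against the definition of $\star_A$, and I expect no genuine obstacle beyond this routine bookkeeping.
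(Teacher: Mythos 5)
Your proposal is correct, and in fact contains two proofs. Your ``self-contained'' argument is essentially identical to the paper's: the paper proves the forward direction by writing $T_x^Af=\lim_\alpha T_x^A(u_\alpha\star_A f)=\lim_\alpha (T_x^Au_\alpha)\star_A f$ with the approximate identity from Theorem~\ref{theo:SAFT1}, justifying the commutation step by Theorem~\ref{Wendel} (which applies because $T_x^A$ and $T_s^A$ commute exactly --- the cocycle $e^{-i\frac{a}{b}xs}$ in Proposition~\ref{pro:SAFT1}(i) is symmetric in $x$ and $s$, so the phases do cancel as you anticipated), and the converse exactly as you do, via the Bochner integral \eqref{eq:translation vector valued} together with commutativity of $\star_A$ so that the integrand is $T_s^Af\in J$. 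Your first argument --- transporting everything through the isometric bijection $E_{\rho_A}$ using \eqref{eq:convolutions using rho} and \eqref{rhoT_x^A} and quoting the classical ``closed ideal $\Leftrightarrow$ translation invariant'' theorem --- is a genuinely different and slightly slicker route that the paper does not take here, although it is precisely the device the paper uses to prove Theorem~\ref{Wendel}; it buys you the result with no new analysis at the cost of invoking the classical theorem as a black box, whereas the paper's direct argument re-runs the classical proof inside the $\star_A$ framework using only the tools already established (the approximate identity and Wendel's theorem). Both are sound; the only point worth making explicit in the reduction is that $E_{\rho_A}$, being multiplication by a unimodular function, is a surjective isometry, so it preserves closedness of subspaces --- which you do note.
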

\begin{proof}
Let $J$ be an ideal in $(L^1(\R),\star_A)$. Let $\{u_\alpha\}$ be an approximate identity in $(L^1(\R),\star_A)$. Then for $f\in J$ and $x\in \R$, we have
\[
T_x^Af=\lim_{\alpha\to\infty}T_x^A(u_\alpha\star_A f)=\lim_{\alpha\to\infty}T_x^Au_\alpha \star_A f,
\]
using Theorem \ref{Wendel}. Since $T_x^Au_\alpha\star_Af\in J$, for all $\alpha$ and $J$ is closed, $T_x^Af\in J$.\\
Conversely, assume that $J$ is invariant under $A$-translations. Let $f\in J,~g\in L^1(\R)$. Then viewing $f\star_Ag$ as a Bochner integral as in \eqref{eq:translation vector valued}, we can conclude that $f\star_Ag\in J$. 
\end{proof}}

\textcolor{black}{\begin{propn}\label{pro:cpt support dense}
The collection $\{g\in L^1(\R):\F_A(g) \textit{ has compact support}\}$ is dense in $L^1(\R)$.
\end{propn}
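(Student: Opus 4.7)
The plan is to exploit the $A$-convolution machinery and approximate identity just established. Specifically, I will exhibit a bounded approximate identity $\{u_n\}$ in $(L^1(\R),\star_A)$ each of whose members has $\F_A u_n$ of compact support. For any $f\in L^1(\R)$, the sequence $f\star_A u_n$ will then simultaneously (i) converge to $f$ in $L^1$-norm, because $\{u_n\}$ is an approximate identity, and (ii) have $\F_A(f\star_A u_n)=\overline{\eta_A}\,\F_Af\,\F_Au_n$ of compact support by the convolution theorem (Proposition \ref{pro:SAFT2}).

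To construct such $\{u_n\}$, I would start from a standard classical approximate identity $\{g_n\}\subset L^1(\R)$ for which each $\widehat{g_n}$ has compact support; e.g.\ $g_n(x)=n\phi(nx)$ with $\phi$ a Fej\'er-type function satisfying $\int\phi=1$ and $\widehat{\phi}$ compactly supported. Mimicking the construction in the proof of Theorem \ref{theo:SAFT1}, I set $u_n=\sqrt{|b|}\,\overline{\rho_A}\,g_n$, so that $\{u_n\}$ is a bounded approximate identity in $(L^1(\R),\star_A)$. Using the identity $\F_Ah(\omega)=\frac{\eta_A(\omega)}{\sqrt{|b|}}\,\widehat{\rho_Ah}(\omega/b)$ applied to $h=u_n$ gives
\[
\F_Au_n(\omega)=\eta_A(\omega)\,\widehat{g_n}(\omega/b),
\]
which has compact support because $\widehat{g_n}$ does.

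Putting the pieces together, the proof concludes quickly: given $f\in L^1(\R)$, the sequence $f_n:=f\star_A u_n$ lies in $L^1(\R)$, satisfies $\|f_n-f\|_1\to 0$, and by Proposition \ref{pro:SAFT2}
\[
\F_Af_n(\omega)=\overline{\eta_A}(\omega)\,\F_Af(\omega)\,\F_Au_n(\omega),
\]
which is supported in the compact support of $\F_Au_n$. Hence each $f_n$ belongs to the set in question, giving the density.

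No real obstacle is expected; the only point requiring care is the choice of the classical $\{g_n\}$ with compactly supported Fourier transforms, but this is standard (any scaled Fej\'er-type kernel works). An entirely parallel, slightly more direct route would be to observe that $g\mapsto \rho_A g$ is an $L^1$-isometry and that $\F_A g$ has compact support iff $\widehat{\rho_A g}$ does, reducing the statement to the classical density of $\{h\in L^1(\R):\widehat{h}\text{ has compact support}\}$ in $L^1(\R)$.
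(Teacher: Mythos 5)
Your proposal is correct, but your primary argument runs along a different track from the paper's. The paper proves the proposition directly by a change of variables: given $f\in L^1(\R)$, it picks $g\in L^1(\R)$ with $\widehat g$ compactly supported and $\|C_{\frac{a}{b}}f-g\|_1<\epsilon$, notes that $C_{-\frac{a}{b}}$ is an $L^1$-isometry so $\|f-C_{-\frac{a}{b}}g\|_1<\epsilon$, and observes that $\F_A(C_{-\frac{a}{b}}g)(\omega)=\frac{\eta_A(\omega)}{\sqrt{|b|}}\,\widehat g\bigl(\frac{\omega-p}{b}\bigr)$ has compact support --- i.e.\ exactly the ``more direct route'' you sketch in your closing paragraph (with the chirp $C_{\frac{a}{b}}$ in place of $\rho_A$; either unimodular multiplier works, since both turn $\F_A$ into a rescaled, translated classical Fourier transform up to a unimodular factor). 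Your main argument instead builds a bounded approximate identity $u_n=\sqrt{|b|}\,\overline{\rho_A}\,g_n$ with $\F_Au_n(\omega)=\eta_A(\omega)\widehat{g_n}(\omega/b)$ compactly supported and approximates $f$ by $f\star_A u_n$, invoking Proposition \ref{pro:SAFT2} to see that $\F_A(f\star_A u_n)$ is compactly supported; every step checks out (the Fej\'er-type kernels exist, $\{u_n\}$ is an approximate identity by the computation in Theorem \ref{theo:SAFT1}, and the $g\,dt$ case of the convolution theorem applies). What the paper's route buys is brevity and independence from the convolution structure; what your route buys is that the approximants $f\star_A u_n$ are produced by a single universal smoothing operator, which is the standard mechanism reused later in the ideal-theoretic arguments of Section 3, so it fits naturally into the surrounding machinery even if it is slightly longer.
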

\begin{proof}
We know that $\{g\in L^1(\R):\widehat{g} \textit{ has compact support}\}$ is dense in $L^1(\R)$. Thus, for $f\in L^1(\R)$, there exists $g\in L^1(\R)$ such that $\widehat{g}$ has compact support and 
  $\|C_\frac{a}{b}f-g\|<\epsilon$.
  This implies that \[\|f-C_{-\frac{a}{b}}g\|=\|C_\frac{a}{b}f-C_\frac{a}{b}C_{-\frac{a}{b}}g\|<\epsilon.\]
  Further, $\F_A(C_{-\frac{a}{b}}g)$ has compact support as $\widehat{g}$ has compact support and
    \[
    \F_A(C_{-\frac{a}{b}}g)(\omega)=\frac{\eta_A(\omega)}{\sqrt{|b|}}\widehat{g}(\frac{\omega-p}{b}), 
    \]
    which completes the proof.
\end{proof}}

\textcolor{black}{\begin{lemma}[Lemma 4.59 in \cite{Folland1995}]\label{lemma:weiner lemma 1}
Let $f\in L^1(\R)$ and $\omega_0\in \R$. Then for every $\epsilon>0$, there exists $h\in L^1(\R)$ with $\|h\|_1<\epsilon$ such that
\[
(f+h)~\widehat{}~(\omega)=\widehat{f}(\omega_0),
\]
for every $\omega$ in some neighbourhood of $\omega_0$.
\end{lemma}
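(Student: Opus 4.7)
The plan is to construct $h$ explicitly by using a suitably dilated and modulated bump function whose Fourier transform equals $1$ on a neighborhood of $\omega_0$, and then extract the smallness of $\|h\|_1$ from the strong continuity of translation in $L^1(\R)$. First I would fix, once and for all, a Schwartz function $\phi\in L^1(\R)$ for which $\widehat{\phi}$ is smooth, compactly supported, and identically $1$ on $[-1,1]$; such a $\phi$ exists because the inverse Fourier transform of any such bump is Schwartz. For a parameter $\lambda>0$ to be chosen later, set
\[
\phi_\lambda(x)=\lambda\,\phi(\lambda x),\qquad \chi_\lambda(x)=e^{i\omega_0 x}\phi_\lambda(x).
\]
A direct computation then shows $\widehat{\phi_\lambda}(\omega)=\widehat{\phi}(\omega/\lambda)$, which equals $1$ on $[-\lambda,\lambda]$, and the modulation shifts this to $\widehat{\chi_\lambda}(\omega)=\widehat{\phi_\lambda}(\omega-\omega_0)$, identically $1$ on the neighborhood $U_\lambda=(\omega_0-\lambda,\omega_0+\lambda)$ of $\omega_0$.

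I would then define the candidate
\[
h_\lambda(x)=\widehat{f}(\omega_0)\,\chi_\lambda(x)-(f\star\chi_\lambda)(x),
\]
where $\star$ denotes the classical convolution (the relative normalization between $\widehat{f\star g}$ and $\widehat{f}\widehat{g}$ can be absorbed into the second term). By the convolution theorem,
\[
\widehat{h_\lambda}(\omega)=\bigl(\widehat{f}(\omega_0)-\widehat{f}(\omega)\bigr)\widehat{\chi_\lambda}(\omega),
\]
which on $U_\lambda$ equals $\widehat{f}(\omega_0)-\widehat{f}(\omega)$. Consequently $\widehat{f+h_\lambda}(\omega)=\widehat{f}(\omega_0)$ for every $\omega\in U_\lambda$, as required.

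The main difficulty is showing that $\|h_\lambda\|_1$ can be made arbitrarily small; the naive bound $\|h_\lambda\|_1\le(|\widehat{f}(\omega_0)|+\|f\|_1)\|\chi_\lambda\|_1$ is $\lambda$-independent, because $\|\chi_\lambda\|_1=\|\phi\|_1$ for every $\lambda$. To bypass this, I would substitute $\widehat{f}(\omega_0)=\int f(y)e^{-i\omega_0 y}dy$ into $h_\lambda$ and use the identity $\chi_\lambda(x-y)=e^{-i\omega_0 y}e^{i\omega_0 x}\phi_\lambda(x-y)$ to obtain the crucial rewriting
\[
h_\lambda(x)=e^{i\omega_0 x}\int f(y)\,e^{-i\omega_0 y}\bigl[\phi_\lambda(x)-\phi_\lambda(x-y)\bigr]\,dy.
\]
Fubini together with the change of variable $u=\lambda x$ then yields
\[
\|h_\lambda\|_1\le\int|f(y)|\,\|\phi_\lambda-T_y\phi_\lambda\|_1\,dy=\int|f(y)|\,\|\phi-T_{\lambda y}\phi\|_1\,dy.
\]
The strong continuity of translation in $L^1(\R)$ makes the integrand tend to $0$ pointwise in $y$ as $\lambda\to 0$, while it is dominated by $2\|\phi\|_1|f(y)|\in L^1(\R)$. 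Dominated convergence then gives $\|h_\lambda\|_1\to 0$, so any sufficiently small $\lambda>0$ produces $h=h_\lambda$ with $\|h\|_1<\epsilon$, completing the construction.
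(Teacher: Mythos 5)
Your proof is correct and is essentially the argument the paper defers to (it cites Lemma 4.59 of Folland without reproducing the proof): the dilation $\phi_\lambda(x)=\lambda\phi(\lambda x)$ plays the role of Folland's shrinking neighbourhoods, and the key rewriting $h_\lambda(x)=e^{i\omega_0 x}\int f(y)e^{-i\omega_0 y}\bigl[\phi_\lambda(x)-\phi_\lambda(x-y)\bigr]\,dy$ followed by the dominated-convergence estimate via strong continuity of translation in $L^1(\R)$ is exactly the standard route. No gaps.
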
}

\textcolor{black}{Now we are in a position to state and prove an analogue Weiner's theorem in connection with the SAFT.}

\textcolor{black}{\begin{theorem}
    Let $J$ be a closed $A$-translation invariant subspace of $L^1(\R)$ such that $Z(J)=\emptyset$, where $Z(J):=\{\omega\in\R:\F_Af(\omega)=0,~ \textit{ for all }f\in J  \}$. Then $J=L^1(\R)$.
\end{theorem}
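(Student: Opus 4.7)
The strategy is to reduce the statement to the classical Wiener theorem by transporting $J$ through the unimodular chirp multiplier $\rho_A$. Concretely, I would set
\[
\tilde{J}:=\{\rho_A f : f\in J\}\subseteq L^1(\R),
\]
prove that $\tilde{J}$ is a closed, classically translation-invariant subspace of $L^1(\R)$ whose common classical Fourier zero set is empty, and then invoke the classical Wiener theorem to conclude $\tilde{J}=L^1(\R)$, whence $J=\overline{\rho_A}\,\tilde{J}=L^1(\R)$.

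Closedness of $\tilde{J}$ is immediate because $f\mapsto \rho_A f$ is an $L^1$-isometry (since $|\rho_A|\equiv 1$). For translation invariance I would rewrite \eqref{rhoT_x^A} in the form $T_x(\rho_A f)=\overline{\rho_A(x)}\,\rho_A(T_x^A f)$; since $J$ is $A$-translation invariant, $T_x^A f\in J$, and since $\tilde{J}$ is a subspace the unimodular scalar $\overline{\rho_A(x)}$ is harmless, so $T_x(\rho_A f)\in \tilde{J}$.

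For the spectral hypothesis, the factorisation \eqref{eq:SAFT1} together with $|\eta_A(\omega)|=1$ yields
\[
\F_A f(\omega)=0\iff \widehat{\rho_A f}(\omega/b)=0.
\]
Because $\xi\mapsto b\xi$ is a homeomorphism of $\R$ (as $b\neq 0$), the assumption $Z(J)=\emptyset$ translates exactly to the statement that the common classical Fourier zero set of $\tilde{J}$ is empty. The classical Wiener theorem now applies, recalling that closed translation-invariant subspaces of $L^1(\R)$ are precisely the closed ideals of $(L^1(\R),\star)$ (the classical counterpart of Theorem \ref{th: ideal}).

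The only real content of this plan is setting up the dictionary between $A$-translation/SAFT and classical translation/Fourier transform; once the identities $T_x(\rho_A f)=\overline{\rho_A(x)}\rho_A(T_x^A f)$ and $\F_A f(\omega)=|b|^{-1/2}\eta_A(\omega)\widehat{\rho_A f}(\omega/b)$ are available, the argument is essentially a transport of structure, and no deep new estimates are required. A self-contained alternative avoiding any reference to classical Wiener would mimic Folland's proof directly in the SAFT setting: use Proposition \ref{pro:cpt support dense} to reduce to $f$ with $\F_A f$ compactly supported, apply an SAFT analogue of Lemma \ref{lemma:weiner lemma 1} (obtained from the classical lemma by a chirp twist), cover $\operatorname{supp}(\F_A f)$ by finitely many neighbourhoods on which suitable elements of $J$ can be matched to $\F_A f$, and finally use the bounded approximate identity from Theorem \ref{theo:SAFT1} to close the approximation. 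I expect the chirp-reduction route to be cleanest; the main obstacle in the direct route would be the book-keeping of the modulation factors $\rho_A$, $\eta_A$ through the finite-cover step.
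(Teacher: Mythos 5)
Your proposal is correct, but it takes a genuinely different route from the paper. The paper proves the theorem directly in the SAFT setting, following Folland's classical argument: it first reduces to the case where $\F_A f$ has compact support (Proposition \ref{pro:cpt support dense}), then uses Lemma \ref{lemma:weiner lemma 1} together with a Neumann series in the Banach algebra $(L^1(\R),\star_A)$ to match $\F_A f$ locally with $\F_A(g\star_A k)$ for suitable $g\in J$, and finally patches these local matches together over a finite cover of $\mathrm{supp}(\F_A f)$ using functions $h_j$ with $\overline{\eta_A}\,\F_A h_j=1$ on the pieces --- i.e.\ exactly the ``self-contained alternative'' you sketch at the end, including the book-keeping of the $\eta_A$ factors you anticipate. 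Your primary route --- transporting $J$ to $\tilde J=\rho_A J$, checking via \eqref{rhoT_x^A} and \eqref{eq:SAFT1} that $\tilde J$ is a closed, classically translation-invariant subspace with empty common Fourier zero set, and citing the classical Wiener theorem --- is sound: $f\mapsto\rho_A f$ is a surjective $L^1$-isometry, the identity $T_x(\rho_A f)=\overline{\rho_A(x)}\,\rho_A(T_x^A f)$ gives translation invariance, and $\F_A f(\omega)=|b|^{-1/2}\eta_A(\omega)\widehat{\rho_A f}(\omega/b)$ with $b\neq 0$ converts $Z(J)=\emptyset$ into the classical hypothesis. This is shorter, and it is in fact the same transport-of-structure device the paper itself uses for Wendel's theorem (Theorem \ref{Wendel}), for the approximate identity (Theorem \ref{theo:SAFT1}), and for the Wiener--Tauberian theorem at the end of Section 3. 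What the paper's longer direct proof buys is a self-contained demonstration of how the new machinery --- the $\overline{\eta_A}$ factor in the convolution theorem, the ideal characterization of Theorem \ref{th: ideal}, and the Banach algebra structure of $(L^1(\R),\star_A)$ --- actually operates, rather than outsourcing the analytic content to the classical case.
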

\begin{proof}
  In view of Proposition \ref{pro:cpt support dense}, it is enough to show that $f\in J$ for all $f\in L^1(\R)$ such that $\F_Af$ has compact support. Let $f\in L^1(\R)$ be such that $\F_Af$ has compact support. Let $K=supp(\F_Af)$.\\
  %Let $K=supp(\F_A(f))$. Using 
Step 1: In this step we show that for each $\omega_0\in\R$, there exists $F\in J$ such that $\F_Af (\omega)=\F_AF(\omega)$ in a neighbourhood of $\omega_0$.
%Let $f\in L^1(\R),\, \omega_0\in Z(J)^c$. 
Since $Z(J)=\emptyset$, we can choose $g\in J$ such that $\overline{\eta_A}(\omega_0)\F_Ag(\omega_0)=1$. Then using Lemma \ref{lemma:weiner lemma 1}, there exists $h\in L^1(\R)$ with $\|h\|_1<\frac{(2\pi |b|)^{1/2}}{2}$ and 
\[
(C_\frac{a}{b}g)~\widehat{}~(\frac{\omega-p}{b})+(C_\frac{a}{b}h)~\widehat{}~(\frac{\omega-p}{b})=(C_\frac{a}{b}g)~\widehat{}~(\frac{\omega_0-p}{b}),
\]
in a neighbourhood of $\omega_0$. This implies that
\begin{equation}\label{eq:weiner 1}
\F_Ag(\omega)+\F_Ah(\omega)=\frac{\eta_A(\omega)}{\eta_A(\omega_0)}\F_Ag(\omega_0)=\eta_A(\omega),
\end{equation}
in a neighbourhood of $\omega_0$. Let $h_n=h\star_A h\star_A\cdots \star_A h$ ($n$-times). Then using the fact that $(L^1(\R),\star_A)$ is a Banach algebra, we can show that the series $f+\sum_{n\in \N}f\star_Ah_n$ converges in $L^1(\R)$. Let $k=f+\sum_{n\in \N}f\star_Ah_n$. Then using convolution theorem, we obtain
\begin{align*}
    \F_Ak(\omega)&=\F_Af(\omega)+\sum_{n\in \N} \overline{\eta_A}(\omega)^n\F_Af(\omega)\big(\F_Ah(\omega)\big)^n\\&=\F_Af(\omega)\frac{1}{1-\overline{\eta_A}(\omega)\F_Ah(\omega)}=\frac{\F_Af(\omega)}{\overline{\eta_A}(\omega)\F_Ag(\omega)},
\end{align*}
in a neighborhood of $\omega_0$, using \eqref{eq:weiner 1}. The second equality in the above equation follows from the fact that $|\F_Ah(\omega)|\leq \frac{1}{(2\pi |b|)^{1/2}}\|h\|_1<\frac{1}{2}$. Thus 
\[
\F_Af(\omega)=\overline{\eta_A}(\omega)\F_Ag(\omega)\F_Ak(\omega)=\F_A(g\star_A k)(\omega),
\]
in a neighbourhood of $\omega_0$. As $g\star_A k\in J$, by Theorem \ref{th: ideal}, our claim is established.\\
Step 2: In this step we show that $f\in J$. Appealing to Step 1, for each $\omega\in\R$, choose $g_\omega\in J$ such that $\F_Af=\F_Ag_\omega$ on a neighborhood $U_\omega$ of $\omega$. Using compactness of $K$, we get $U_1,\, U_2,\, \cdots U_n$ and $g_1, g_2, \cdots g_n\in J$ such that $K\subset \cup_{i=1}^nU_i$ and $\F_Af=\F_Ag_i$ on $U_i$. Now choose open $W_\omega$ such that
\[
\{\omega\}\subset W_\omega\subset \overline{W_\omega}\subset U_i.
\]
Again using the compactness of $K$, there exist $W_{\omega_1},\, W_{\omega_2},\cdots, W_{\omega_m}$ such that $K\subset\cup_{i=1}^m\overline{W_{\omega_i}}$ and $\overline{W_{\omega_i}}\subset U_{\omega_i}$, where $\omega_i\in U_{\omega_i}\in \{U_1,\,U_2,\cdots, U_n\}$. Take $h_1,\,h_2,\cdots, h_m\in L^1(\R)$ such that
\[
\overline{\eta_A}(\omega)\F_Ah_j(\omega)=1~~\text{ on } \overline{W_{\omega_j}}~~\text{ and } supp(\F_Ah_j)\subset U_{\omega_j}.
\]
Then $\Pi_{j=1}^m(1-\overline{\eta_A}(\omega)\F_Ah_j(\omega))=0$ on $K$. This implies that
\[
\F_Af(\omega)=\F_Af(\omega)[1-\Pi_{j=1}^m(1-\overline{\eta_A}(\omega)\F_Ah_j(\omega))].
\]
This can be rewritten as $f=\sum f\star_AH_i$, where $H_i$ being one of the $h_j$'s or their convolutions, and $supp(\F_AH_i)\subset U_{\omega_i}$, for some $i$. But
\begin{align*}
\F_A(f\star_AH_i)(\omega)=\overline{\eta_A}(\omega)\F_Af(\omega)\F_AH_i(\omega)&=\overline{\eta_A}(\omega)\F_Ag_i(\omega)\F_AH_i(\omega)\\
&=\F_A(g_i\star_AH_i)(\omega).
\end{align*}
As $g_i\star_AH_i\in J$, $f\in J$. This completes the proof.
\end{proof}}

\textcolor{black}{\begin{corollary}
Let $f\in L^1(\R)$. Then the closed linear span of $A$-translates of $f$ is $L^1(\R)$ if and only if $\F_Af$ never vanishes.
\end{corollary}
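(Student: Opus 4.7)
The plan is to deduce this as a direct corollary of the Wiener-type theorem just proved, by identifying the zero set $Z(J)$ when $J$ is the closed linear span of $A$-translates of $f$.

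Set $J = \overline{\mathrm{span}}\{T_x^A f : x \in \R\}$. First I would check that $J$ is a closed $A$-translation invariant subspace: by Proposition \ref{pro:SAFT1}(i), $T_y^A T_x^A f = e^{-iaxy/b} T_{x+y}^A f$, so the raw linear span is invariant under each $T_y^A$, and since $T_y^A$ is an isometry on $L^1(\R)$ invariance passes to the closure. Hence the Wiener-type theorem applies: $J = L^1(\R)$ iff $Z(J) = \emptyset$.

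The core computation is to identify $Z(J)$ with the zero set of $\F_A f$. Using Proposition \ref{pro:SAFT1}(iv), a generic finite linear combination $g = \sum_{i} c_i\, T_{x_i}^A f$ satisfies
\begin{equation*}
\F_A g(\omega) = \F_A f(\omega)\, \sum_{i} c_i\, \rho_A(x_i)\, e^{-i x_i \omega/b}.
\end{equation*}
Therefore $\F_A g$ vanishes at every point where $\F_A f$ vanishes. Because $\F_A : L^1(\R) \to C_0(\R)$ is continuous, this property extends to the closure $J$, giving $\{\omega : \F_A f(\omega)=0\} \subseteq Z(J)$. The reverse inclusion is immediate since $f \in J$. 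Thus $Z(J) = \{\omega : \F_A f(\omega)=0\}$.

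With this identification, both directions follow cleanly. If $\F_A f$ never vanishes then $Z(J) = \emptyset$ and the Wiener theorem yields $J = L^1(\R)$. Conversely, if $\F_A f(\omega_0)=0$ for some $\omega_0$, then every $g \in J$ satisfies $\F_A g(\omega_0)=0$; but it is easy to exhibit $h \in L^1(\R)$ with $\F_A h(\omega_0) \neq 0$ (for instance, using formula \eqref{eq:SAFT1} one can take $h$ so that $\rho_A h$ is a suitable Gaussian, giving $\F_A h$ nowhere zero), hence $J \neq L^1(\R)$. I do not foresee a serious obstacle here; the only point that requires a moment's care is the continuity argument extending the vanishing from finite combinations to $J$, which is standard since $\|\F_A g\|_\infty \leq (2\pi|b|)^{-1/2}\|g\|_1$.
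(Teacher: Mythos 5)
Your proof is correct and follows essentially the same route as the paper: one direction is an immediate application of the Wiener-type theorem to the ($A$-translation invariant) closed span, and the other uses the continuity of $\F_A$ from $L^1(\R)$ to $C_0(\R)$ to propagate the vanishing of $\F_A$ at $\omega_0$ from the translates of $f$ to all of $J$, contradicting $J=L^1(\R)$. The extra details you supply (verifying invariance via Proposition \ref{pro:SAFT1}(i) and exhibiting an explicit $h$ with $\F_Ah(\omega_0)\neq 0$) are correct and only make the argument more self-contained.
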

\begin{proof}
Let $J$ be the closed linear span of $A$-translates of $f$.
    Let $\F_Af(\omega)\neq 0$ for all $\omega\in \R$. Since $J$ is a closed $A$-translation invariant subspace, appealing to Wiener's theorem, we get $J=L^1(\R)$.\\
    Conversely assume that $J=L^1(\R)$. Suppose $\F_Af(\omega_0)=0$ for some $\omega_0\in \R$. Then $\F_Ah(\omega_0)=0$ for all $h\in span\{T_x^Af:x\in \R\}$. Since $span\{T_x^Af:x\in \R\}$ is dense in $L^1(\R)$ and $\|\F_Ah\|_\infty\leq (2\pi |b|)^{-1/2}\|f\|_1$, we can conclude that $\F_Ah(\omega_0)=0$, for all $h\in L^1(\R)$, which is an impossibility. Hence the result follows.
\end{proof}}

Now, we shall state the analogue of Wiener's theorem for $L^2(\R)$ functions.

\textcolor{black}{\begin{theorem}
    Let $f\in L^2(\R)$. Then the closed linear span of $A$-translates of $f$ is $L^2(\R)$ if and only if $\F_Af(\omega)\neq 0$ a.e.
\end{theorem}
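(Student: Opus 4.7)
The plan is to transfer the problem to the SAFT side via the unitarity of $\F_A$ on $L^2(\R)$ and then reduce it to the familiar fact that the modulates $\{e^{-ix\omega/b}g(\omega):x\in\R\}$ are total in $L^2(\R)$ precisely when $g$ is a.e.\ nonzero.

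First, since $\F_A$ is a unitary operator on $L^2(\R)$ and, by the $L^2$ extension \eqref{gtsaft2} of Proposition \ref{pro:SAFT1}(iv), one has $\F_A(T_x^Af)(\omega)=\rho_A(x)e^{-ix\omega/b}\F_Af(\omega)$ with $|\rho_A(x)|=1$, the subspace $\overline{\mathrm{span}}\{T_x^Af:x\in\R\}$ equals $L^2(\R)$ if and only if
\[
M:=\overline{\mathrm{span}}\{e^{-ix\omega/b}g(\omega):x\in\R\}=L^2(\R),\qquad g:=\F_Af.
\]
This immediately gives the \emph{only if} direction: if $g$ vanishes on a measurable set $E$ of positive measure, then every element of $M$ vanishes a.e.\ on $E$, so $M\neq L^2(\R)$, and hence the closed linear span of $A$-translates of $f$ is strictly smaller than $L^2(\R)$.

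For the converse, assume $g(\omega)\neq 0$ a.e. To show $M=L^2(\R)$, it suffices to show that $M^{\perp}=\{0\}$. Let $h\in L^2(\R)$ be orthogonal to every $e^{-ix\cdot/b}g$, i.e.
\[
\int_\R e^{ix\omega/b}\,\overline{g(\omega)}\,h(\omega)\,d\omega=0,\qquad x\in\R.
\]
Set $\phi(\omega):=\overline{g(\omega)}\,h(\omega)$. By Cauchy--Schwarz, $\phi\in L^1(\R)$ since $g,h\in L^2(\R)$. The displayed integral is exactly $\sqrt{2\pi}\,\widehat{\phi}(-x/b)$, and since this vanishes for every $x\in\R$ and $b\neq 0$, we conclude $\widehat{\phi}\equiv 0$ on $\R$, whence $\phi=0$ a.e. by injectivity of the classical Fourier transform on $L^1(\R)$. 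Because $g(\omega)\neq 0$ a.e., this forces $h=0$ a.e.

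The argument is essentially routine once the translation/SAFT intertwining \eqref{gtsaft2} is in hand; the only mildly delicate point is ensuring that $\overline{g}h\in L^1(\R)$ so that the orthogonality condition can be read as the vanishing of a genuine Fourier transform. No new harmonic-analytic machinery beyond unitarity of $\F_A$, Cauchy--Schwarz, and injectivity of the classical Fourier transform on $L^1$ is required, so this case is considerably softer than the $L^1$ analogue proved above.
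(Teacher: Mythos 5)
Your proof is correct and follows essentially the same route as the paper: pass to the SAFT side via unitarity and the intertwining relation \eqref{gtsaft2}, read the orthogonality condition as the vanishing of the classical Fourier transform of $\overline{\F_Af}\,\F_Ag\in L^1(\R)$, and invoke injectivity of the Fourier transform on $L^1$. Your explicit Cauchy--Schwarz check that the product lies in $L^1$ and your direct argument for the ``only if'' direction are small refinements of details the paper leaves implicit, but the argument is the same.
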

\begin{proof}
    Let $M=\overline{span}\{T_x^Af:x\in \R\}$. Then $g\in M^\perp$ if and only if $\langle T_x^Af,g\rangle=0$, for all $x\in\R$. For $x\in \R$, consider
    \begin{align*}
        \langle T_x^Af,g\rangle=\langle \F_A(T_x^Af),\F_Ag\rangle&=\int_\R \rho_A(x)e^{-i\frac{x}{b}\omega}\F_Af(\omega)\overline{\F_Ag(\omega)}d\omega\\
        &=\sqrt{2\pi}\rho_A(x)(\F_Af\overline{\F_Ag})~\widehat{}~(\frac{x}{b}).
    \end{align*}
    Thus \textcolor{black}{$\langle T_x^Af,g\rangle =0~\text{ for all }x\in \R$ is equivalent to 
    \[(\F_Af(\omega)\overline{\F_Ag(\omega)})~\widehat{ }~(x)=0,~\text{ for all }x\in \R,\]
    which is same as $\F_Af(\omega)\overline{\F_Ag(\omega)}=0,\text{ a.e. }\omega\in \R.$} 
    %\begin{center}
%    \begin{align*}
%    \st{\langle T_x^Af,g\rangle =0~\text{ for all }x\in \R & \Leftrightarrow (\F_A(f)(\omega)\overline{\F_A(g)(\omega)})~\widehat{ }~(x)=0,~\text{ for all }x\in \R\\
%     &\Leftrightarrow \F_A(f)(\omega)\overline{\F_A(g)(\omega)}=0,\text{ a.e. }\omega\in \R.}
%    \end{align*}
    %\end{center}
    This shows that $M^\perp=\{0\}$ if and only  if $\F_Af(\omega)\neq 0$ a.e. $\omega\in\R.$
\end{proof}}

\textcolor{black}{We conclude this section by establishing an analogue of Wiener-Tauberian theorem.} \textcolor{black}{Recall that $M_y$ denotes the modulation operator which is defined in \eqref{eq:cmodulation}.}

\textcolor{black}{\begin{theorem}[Wiener-Tauberian]
    Let $\phi\in L^\infty(\R),$ $h\in L^1(\R)$ be such that $\F_Ah(\omega)\neq 0$, for all $\omega\in \R$ and 
    \[
    \lim_{x\to \pm \infty}C_\frac{a}{b}(h\star_A \phi)(x)=m\F_A(M_{-\frac{p}{b}}h)(0).
    \]
    Then
    \[
    \lim_{x\to \pm \infty}C_\frac{a}{b}(f\star_A \phi)(x)=m\F_A(M_{-\frac{p}{b}}f)(0),
    \]
    for all $f\in L^1(\R)$.
\end{theorem}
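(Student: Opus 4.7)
The plan is to reduce the statement to the classical Wiener--Tauberian theorem via the chirp modulation $C_{a/b}$, exactly as the bounded approximate identity in Theorem~\ref{theo:SAFT1} was transferred. The key bridge is the identity \eqref{eq:A-convolution}, which for any $g\in L^1(\R)$ and $\phi\in L^\infty(\R)$ gives
\[
C_{a/b}(g\star_A\phi)(x)=\frac{1}{\sqrt{|b|}}\bigl(C_{a/b}g\star C_{a/b}\phi\bigr)(x).
\]
Since $|C_{a/b}g|=|g|$, the map $g\mapsto C_{a/b}g$ is an isometric bijection of $L^1(\R)$ onto itself, and of $L^\infty(\R)$ onto itself. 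So the identity above converts all the $A$-convolution asymptotics in the hypothesis and conclusion into ordinary convolution asymptotics for the pair $(C_{a/b}h, C_{a/b}\phi)$.

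Next I would rewrite the boundary value $m\,\F_A(M_{-p/b}g)(0)$ in classical Fourier terms. A direct computation gives $\rho_A(t)\,e^{-ipt/b}=e^{iat^2/(2b)}$, so $\rho_A\cdot M_{-p/b}g=C_{a/b}g$, and the factorization \eqref{eq:SAFT1} together with $\eta_A(0)=1$ yields
\[
\F_A(M_{-p/b}g)(0)=\frac{1}{\sqrt{|b|}}\,\widehat{C_{a/b}g}(0).
\]
Combined with the first step, the hypothesis becomes $\lim_{x\to\pm\infty}(C_{a/b}h\star C_{a/b}\phi)(x)=m\,\widehat{C_{a/b}h}(0)$, and the desired conclusion becomes, for every $f\in L^1(\R)$,
\[
\lim_{x\to\pm\infty}\bigl(C_{a/b}f\star C_{a/b}\phi\bigr)(x)=m\,\widehat{C_{a/b}f}(0).
\]

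To apply the classical Wiener--Tauberian theorem to the kernel $C_{a/b}h\in L^1(\R)$, I would verify that $\widehat{C_{a/b}h}$ is nowhere zero. Writing $\rho_A h=M_{p/b}(C_{a/b}h)$ and inserting this into \eqref{eq:SAFT1} gives
\[
\F_A h(\omega)=\frac{\eta_A(\omega)}{\sqrt{|b|}}\,\widehat{C_{a/b}h}\!\left(\frac{\omega-p}{b}\right),
\]
and since $|\eta_A|\equiv 1$, the non-vanishing of $\F_A h$ on $\R$ is equivalent to the non-vanishing of $\widehat{C_{a/b}h}$ on $\R$. The classical Wiener--Tauberian theorem then produces the required limit for every $C_{a/b}f$ in place of the kernel, and reversing the translation of the first two steps yields the claim.

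The conceptual content is entirely the classical Wiener--Tauberian theorem; the only real obstacle is bookkeeping of the $\sqrt{|b|}$ factors, of the chirp $\rho_A$ versus $C_{a/b}$, and of the translation $p/b$ that appears inside $\widehat{C_{a/b}h}$. Once these are pinned down, the proof is a single application of the classical result.
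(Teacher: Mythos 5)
Your proposal is correct and follows essentially the same route as the paper: transfer everything to ordinary convolution via $C_{a/b}$ using \eqref{eq:A-convolution}, check that $\F_Ah$ nowhere zero is equivalent to $\widehat{C_{a/b}h}$ nowhere zero through $\F_Ah(\omega)=\frac{\eta_A(\omega)}{\sqrt{|b|}}\widehat{C_{a/b}h}\bigl(\frac{\omega-p}{b}\bigr)$, identify $\F_A(M_{-p/b}g)(0)=\frac{1}{\sqrt{|b|}}\widehat{C_{a/b}g}(0)$, and invoke the classical Wiener--Tauberian theorem. The only cosmetic difference is that you derive the boundary-value identity directly from $\rho_A\cdot M_{-p/b}g=C_{a/b}g$ and $\eta_A(0)=1$, whereas the paper reaches the same identity by shuffling translations and modulations of $\widehat{C_{a/b}f}$; the substance is identical.
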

\begin{proof}
     Since, $0\neq \F_Ah(\omega)=\frac{\eta_A(\omega)}{\sqrt{|b|}}(C_\frac{a}{b}h)~\widehat{}~(\frac{\omega-p}{b})$, for all $\omega\in \R$, $(C_\frac{a}{b}h)~\widehat{}~(\omega)\neq 0$, for all $\omega\in \R$. Further,
     \[
     C_\frac{a}{b}(h\star_A \phi)(x)=\frac{1}{\sqrt{|b|}}(C_\frac{a}{b}h\star C_\frac{a}{b}\phi)(x).
     \]
     Furthermore, 
     \begin{align*}
     \F_Af(0)=\frac{1}{\sqrt{|b|}}(C_\frac{a}{b}f)~\widehat{}~(-\frac{p}{b})=\frac{1}{\sqrt{|b|}}T_{\frac{p}{b}}(C_\frac{a}{b}f)~\widehat{}~(0)&=\frac{1}{\sqrt{|b|}}(M_\frac{p}{b}C_\frac{a}{b}f)~\widehat{}~(0)\\
     &=\frac{1}{\sqrt{|b|}}(C_\frac{a}{b}M_\frac{p}{b}f)~\widehat{}~(0),
     \end{align*}
     using $C_sM_y=M_yC_s$. Thus
     \begin{equation}\label{eq:wt 1}
     \F_A(M_{-\frac{p}{b}f})(0)=\frac{1}{\sqrt{|b|}}(C_\frac{a}{b}f)(0).
     \end{equation}
     Hence 
     \[
     \lim_{x\to\pm\infty}(C_\frac{a}{b}h\star C_\frac{a}{b}\phi)(x)=m(C_\frac{a}{b}h)~\widehat{}~(0),
     \]
     using given hypothesis. Now using the classical Wiener-Tauberian theorem, we get
     \[
     \lim_{x\to\pm\infty}(f\star C_\frac{a}{b}\phi)(x)=m\widehat{f}(0),~~\text{ for all }f\in L^1(\R).
     \]
     This implies that
     \[
     \lim_{x\to\pm\infty}(C_\frac{a}{b}f\star C_\frac{a}{b}\phi)(x)=m(C_\frac{a}{b}f)~\widehat{}~(0),~~\text{ for all }f\in L^1(\R).
     \]
     This in turn implies that
     \[
    \lim_{x\to \pm \infty}C_\frac{a}{b}(f\star_A \phi)(x)=m\F_A(M_{-\frac{p}{b}}f)(0),~~\text{ for all }f\in L^1(\R),
    \]
    using \eqref{eq:wt 1}.
\end{proof}
As a special case, we state the following analogue of the Wiener-Tauberian theorem associated with the fractional Fourier transform.
\begin{corollary}
    Let $\phi\in L^\infty(\R)$, $h\in L^1(\R)$ be such that $\F_\theta h(\omega)\neq 0$, for all $\omega\in \R$ and 
    \[
    \lim_{x\to \pm \infty}C_\theta(h\star_\theta \phi)(x)=m\F_\theta h(0).
    \]
    Then
    \[
    \lim_{x\to \pm \infty}C_\theta(f\star_\theta \phi)(x)=m\F_\theta f(0),
    \]
    for all $f\in L^1(\R)$.
\end{corollary}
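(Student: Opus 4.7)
The plan is to deduce the corollary as an immediate specialization of the preceding Wiener-Tauberian theorem for the SAFT. Specifically, I would fix the parameter set $A=\{\cos\theta,\sin\theta,-\sin\theta,\cos\theta,0,0\}$, which is exactly the choice identified in the introduction as producing the fractional Fourier transform. Under this choice we have $a=\cos\theta$, $b=\sin\theta$, and crucially $p=q=0$. Then $a/b=\cot\theta$, so $C_{a/b}$ coincides with the fractional chirp operator $C_\theta$, the $A$-convolution $\star_A$ becomes $\star_\theta$, and $\F_A=\F_\theta$.

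The key observation that makes this reduction clean is that $p=0$ forces the modulation factor $M_{-p/b}$ to be the identity operator. Consequently, $\F_A(M_{-p/b}h)(0)=\F_\theta h(0)$ and $\F_A(M_{-p/b}f)(0)=\F_\theta f(0)$, so the hypothesis and the conclusion of the general Wiener-Tauberian theorem transcribe verbatim into the statement of the corollary. The nonvanishing condition $\F_\theta h(\omega)\neq 0$ for every $\omega\in\R$ is literally the same as $\F_A h(\omega)\neq 0$ for this parameter choice, and the asymptotic hypothesis on $C_\theta(h\star_\theta\phi)$ matches the one on $C_{a/b}(h\star_A\phi)$ after the identifications above.

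Since the main Wiener-Tauberian theorem has just been proved, applying it with this particular $A$ yields the conclusion $\lim_{x\to\pm\infty}C_\theta(f\star_\theta\phi)(x)=m\F_\theta f(0)$ for all $f\in L^1(\R)$. There is no genuine obstacle: the entire analytic content, including the passage to the classical Wiener-Tauberian theorem via the chirp-conjugation identity $C_{a/b}(f\star_A g)=|b|^{-1/2}(C_{a/b}f\star C_{a/b}g)$, is already absorbed in the proof of the general statement. The corollary amounts to recording what happens in the one-parameter subcase relevant to the fractional Fourier transform.
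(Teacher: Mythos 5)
Your proposal is correct and is exactly the argument the paper intends: the corollary is stated as the specialization of the Wiener--Tauberian theorem to $A=\{\cos\theta,\sin\theta,-\sin\theta,\cos\theta,0,0\}$, where $p=0$ makes $M_{-p/b}$ the identity so that $\F_A(M_{-p/b}f)(0)=\F_\theta f(0)$. The paper gives no separate proof, and your reduction (valid under the standing assumption $b=\sin\theta\neq 0$) is the same one it leaves implicit.
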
}

\section{$A$-shift invariant spaces}

\textcolor{black}{In this section, we aim to study shift invariant spaces associated with the SAFT, called $A$-shift invariant spaces, in detail. Recall that the $A$-shift invariant space is defined by $V_A(\phi)=\overline{span}\{T_k^A\phi:k\in\Z\}$ for $\phi\in L^2(\R)$.} First, we obtain the following result whose proof is similar to that of Theorem 2 in \cite{AldGro2000} in the classical case. 
%shift invariant space $V(\phi),$  follows exactly similar lines as in .
\begin{theorem}
Let $\phi \in W\big(C, \ell^1(\Z)\big)$ be such that $\{T_k^A \phi : k\in \Z\}$ forms a Riesz basis for $V_A(\phi)$. Then
\begin{itemize}
\item[(i)] $V_A(\phi)\subseteq W\big(C, \ell^2(\Z)\big)$.
\item[(ii)] If $\mathcal{X}= \{x_k:k\in \Z\}$ is separated, then there is $M>0$ such that
$$
\big(\sum_{k \in \Z} |f(x_k)|^2\big)^{1/2}\leq M\|f\|, \quad \forall f\in V_A(\phi).
$$
\end{itemize}

\end{theorem}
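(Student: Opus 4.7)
Since $\{T_k^A\phi\}_{k\in\Z}$ is a Riesz basis for $V_A(\phi)$, every $f\in V_A(\phi)$ has a unique expansion $f=\sum_{k\in\Z} c_k\,T_k^A\phi$ in $L^2(\R)$ with $\|c\|_{\ell^2}\leq C\|f\|_{L^2}$ for some $C>0$ depending only on the lower Riesz bound. The crucial observation is that the chirp factor entering $T_k^A$ is unimodular, so $|T_k^A\phi(t)|=|\phi(t-k)|$ for every $k\in\Z$ and $t\in\R$. This will let me transport the classical Wiener amalgam argument of Aldroubi--Gr\"ochenig (Theorem~2 of \cite{AldGro2000}) essentially verbatim to the $A$-setting.

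For (i), set $\alpha_n:=\max_{x\in[0,1]}|\phi(x+n)|$, so that $\|\alpha\|_{\ell^1}=\|\phi\|_{W(C,\ell^1)}<\infty$ by hypothesis on $\phi$. For $n\in\Z$ the pointwise estimate
\[
\max_{x\in[0,1]}|f(x+n)|\leq\sum_{k\in\Z}|c_k|\max_{x\in[0,1]}|\phi(x+n-k)|=(|c|\ast\alpha)(n),
\]
together with the discrete Young inequality $\ell^2\ast\ell^1\hookrightarrow\ell^2$, yields $\|f\|_{W(C,\ell^2)}\leq C\,\|\phi\|_{W(C,\ell^1)}\|f\|_{L^2}$. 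Continuity of $f$ follows in parallel from the Weierstrass $M$-test: on each interval $[n,n+1]$ the series $\sum_k c_k T_k^A\phi$ is dominated by the convergent numerical series $\sum_k|c_k|\alpha_{n-k}$, hence converges uniformly on compacta to a continuous function. This places $f$ in $W(C,\ell^2(\Z))$ and establishes (i).

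For (ii), a separated set $\mathcal{X}$ of separation $\delta>0$ meets each unit interval $[n,n+1]$ in at most $N:=\lfloor 1/\delta\rfloor+1$ points. Grouping the samples according to $\lfloor x_k\rfloor$ and applying (i),
\[
\sum_{k\in\Z}|f(x_k)|^2\leq N\sum_{n\in\Z}\max_{x\in[0,1]}|f(x+n)|^2=N\,\|f\|_{W(C,\ell^2)}^2\leq M^2\|f\|_{L^2}^2,
\]
with $M=\sqrt{N}\,C\,\|\phi\|_{W(C,\ell^1)}$. The only real hurdle is the amalgam inclusion in (i); once one observes that the chirp twist in $T_k^A$ is unimodular and drops out of every amplitude estimate, the problem reduces to a direct application of the discrete Young inequality, and part (ii) then follows immediately by combining (i) with the separation count.
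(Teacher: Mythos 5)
Your proof is correct and follows essentially the same route the paper intends: the paper omits the argument, citing Theorem 2 of Aldroubi--Gr\"ochenig, and your transfer of that proof via the observation that the chirp in $T_k^A$ is unimodular (so $|T_k^A\phi(t)|=|\phi(t-k)|$ and all amplitude estimates reduce to the classical amalgam/Young inequality computation) is exactly the intended reduction. Nothing further is needed.
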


\begin{corollary}
Let $\phi \in W(C, \ell^1(\Z))$. Then $V_A(\phi)$ is a reproducing kernel Hilbert space with the reproducing kernel
\[
K(x,y)=\sum_{k\in \Z}e^{i\frac{a}{b}k(x-y)}\overline{\phi(x-k)}S^{-1}\phi (y-k),
\]
where $S$ is the frame operator for $\{T_k^A\phi:k\in\Z\}$.
\begin{proof}
Let $x\in \R$ be fixed. Then, taking $\mathcal{X}=\{x+k:k\in \Z\}$ in the previous theorem we get $M>0$ such that $|f(x)|\leq M\|f\|$, for every $f\in V_A(\phi)$, which shows that $V_A(\phi)$ is a RKHS. The reproducing kernel for $V_A(\phi)$ is $K(x,y)=\sum_{k\in \Z}\overline{T_k^A\phi (x)}T_k^AS^{-1}\phi (y)$. Now, using the definition of $T_k^A$ our assertion follows.
\end{proof}
\end{corollary}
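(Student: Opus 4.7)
The first assertion, that $V_A(\phi)$ is a reproducing kernel Hilbert space, is essentially a direct consequence of part (ii) of the preceding theorem. For any fixed $x\in\R$, apply that theorem with the separated set $\mathcal{X}=\{x\}$ (or, if one prefers, $\mathcal{X}=\{x+k:k\in\Z\}$, of which $\{f(x)\}$ is the $k=0$ summand): this yields $|f(x)|\leq M\|f\|$ for every $f\in V_A(\phi)$, so the evaluation functional is bounded and the existence of a reproducing kernel follows from the Riesz representation theorem.

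To identify the kernel, I would use the canonical dual frame. Since $\{T_k^A\phi:k\in\Z\}$ is a Riesz basis for $V_A(\phi)$ with frame operator $S$, every $f\in V_A(\phi)$ admits the expansion
\[
f(y)=\sum_{k\in\Z}\langle f, S^{-1}T_k^A\phi\rangle\, T_k^A\phi(y),
\]
and reading off $f(y)=\langle f,K(\cdot,y)\rangle$ gives
\[
K(x,y)=\sum_{k\in\Z}\overline{T_k^A\phi(x)}\; S^{-1}T_k^A\phi(y).
\]

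The key intermediate step, and the main obstacle, is the commutation $S^{-1}T_k^A=T_k^AS^{-1}$; equivalently $ST_k^A=T_k^AS$. I would prove this by a direct computation starting from $Sf=\sum_j\langle f,T_j^A\phi\rangle T_j^A\phi$, using $(T_k^A)^*=e^{-i(a/b)k^2}T_{-k}^A$ from Proposition \ref{pro:SAFT1}(ii) and the projective composition law $T_x^AT_y^A=e^{-i(a/b)xy}T_{x+y}^A$ from (i). The point is that the phase factors produced by the adjoint and by the composition $T_{-k}^AT_j^A=e^{i(a/b)kj}T_{j-k}^A$ conspire, after re-indexing $m=j-k$, to pull a factor $T_k^A$ outside the sum. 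This is delicate only because one has to keep sesquilinearity and the signs of the phases straight.

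Once commutation is in hand, $S^{-1}T_k^A\phi=T_k^AS^{-1}\phi$, and I would just substitute
\[
T_k^A\phi(x)=e^{-i(a/b)k(x-k)}\phi(x-k),\qquad T_k^AS^{-1}\phi(y)=e^{-i(a/b)k(y-k)}S^{-1}\phi(y-k).
\]
The product of the phase factors simplifies as $e^{i(a/b)k(x-k)}e^{-i(a/b)k(y-k)}=e^{i(a/b)k(x-y)}$, producing exactly the stated kernel. (Pointwise convergence of the series is inherited from the frame expansion applied at the bounded functional $\delta_y$, together with the amalgam hypothesis on $\phi$ transferred to $S^{-1}\phi$.)
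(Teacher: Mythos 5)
Your argument is correct and follows essentially the same route as the paper: boundedness of point evaluations via part (ii) of the preceding theorem with $\mathcal{X}=\{x+k:k\in\Z\}$, identification of the kernel through the canonical dual frame expansion, and then the explicit formula from the definition of $T_k^A$. The one ingredient you rightly flag as nontrivial, the commutation $ST_k^A=T_k^AS$ (hence $S^{-1}T_k^A\phi=T_k^AS^{-1}\phi$), is exactly what the paper proves by the phase-bookkeeping computation you describe in the theorem immediately following this corollary, so your write-up is if anything more explicit about the logical dependency; the only slip is that reading the kernel off from $f(y)=\langle f,K(\cdot,y)\rangle$ literally yields the conjugate-transposed expression, which is harmless since $K(x,y)=\overline{K(y,x)}$.
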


\begin{theorem}
If $\{T_k^A\phi:k\in \Z\}$ is a frame sequence, for $\phi \in L^2(\R)$, then the members of its canonical dual frame also are $A$-translates of a single function.
\begin{proof}
Let $S$ be the frame operator for $\{T_k^A\phi:k\in\Z\}$. First we prove that $ST_k^A=T_k^AS,~\forall ~k \in \Z.$ Let $f\in V_A(\phi)$. Then for $k\in \Z$, we have
\begin{align*}
ST_k^Af&= \sum_{k
^\prime \in \Z} \langle T_k^Af, T_{k^\prime}^A \phi \rangle T_{k^\prime}^A \phi\\
&=\sum_{k^ \prime \in \Z} \langle f, (T_k^A)^*T_{k^\prime}^A\phi \rangle T_{k^ \prime}^A\phi\\
&=\sum_{k^\prime \in \Z}\langle f, e^{-i\frac{a}{b}k^2}T_{-k}^AT_{k^\prime}^A\phi \rangle T_{k^\prime}^A\phi\\
&=\sum_{k^\prime \in \Z}\langle f, T_{k^\prime -k}^A\phi \rangle e^{-i\frac{a}{b}k(k^\prime -k)}T_{k^\prime}^A\phi\\
&= \sum_{\ell \in \Z} \langle f, T_\ell^A \phi \rangle T_k^AT_\ell^A \phi\\
&=T_k^ASf.
\end{align*}
Since $S$ is invertible, $T_k^Af=S^{-1}T_k^ASf$ and we have for every $h \in V_A(\phi),$
$$
T_k^AS^{-1}h=S^{-1}T_k^ASS^{-1}h=
S^{-1}T_k^Ah.
$$
Thus, if $\{T_k^A\phi:k\in \Z\}$ is a frame for $V_A(\phi),$ then the canonical dual frame $\{S^{-1}T_k^A\phi :k\in \Z\}$ is given by $\{T_k^AS^{-1}\phi : k\in \Z\}.$ Taking $\psi =S^{-1}\phi$, we conclude that the canonical dual frame is also of the form $\{T_k^A\psi:k\in \Z\}.$
\end{proof}
\end{theorem}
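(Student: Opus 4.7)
The plan is to reduce the assertion to the commutation relation $S\,T_k^A = T_k^A\,S$ for the frame operator $S$ of $\{T_k^A\phi\}_{k\in\Z}$. Once that is established, invertibility of $S$ on $V_A(\phi)$ immediately gives $S^{-1}T_k^A = T_k^A S^{-1}$, so that
\[
\{S^{-1}T_k^A\phi : k\in\Z\} = \{T_k^A(S^{-1}\phi) : k\in\Z\},
\]
which exhibits the canonical dual frame as the system of $A$-translates of the single function $\psi := S^{-1}\phi$.

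To prove the commutation, I would start from the frame expansion
\[
S\,T_k^A f = \sum_{k'\in\Z}\langle T_k^A f,\, T_{k'}^A\phi\rangle\, T_{k'}^A\phi,
\]
and pass $T_k^A$ to the second slot of the inner product via the adjoint formula $(T_k^A)^* = e^{-i\frac{a}{b}k^2}T_{-k}^A$ from Proposition~\ref{pro:SAFT1}(ii). Next, I would apply Proposition~\ref{pro:SAFT1}(i) to collapse $T_{-k}^AT_{k'}^A$ into $e^{i\frac{a}{b}kk'}T_{k'-k}^A$, and then re-index the sum by $\ell = k'-k$. A second use of Proposition~\ref{pro:SAFT1}(i) rewrites $T_{k+\ell}^A$ as $e^{i\frac{a}{b}k\ell}T_k^A T_\ell^A$, producing a phase factor that exactly cancels the one picked up earlier. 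What remains is $T_k^A$ factored outside the sum and the sum itself equal to $Sf$.

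The real delicacy is the bookkeeping of the cocycle phases $e^{\pm i\frac{a}{b}k\ell}$. Because $x\mapsto T_x^A$ is only a projective representation of $\R$, the naive argument from the classical shift-invariant setting (where $T_x$ is a genuine representation and everything commutes by inspection) does not transfer; the whole proof hinges on the two phases produced -- one from moving the adjoint across and one from composing translations -- cancelling each other. I also have to be careful about conjugate-linearity of $\langle\cdot,\cdot\rangle$ in the second argument when extracting the scalar $e^{-i\frac{a}{b}k^2}$ from $\langle f,\, e^{-i\frac{a}{b}k^2}T_{-k}^AT_{k'}^A\phi\rangle$. Beyond this algebraic bookkeeping the argument is formal, and the dual-frame conclusion is then immediate.
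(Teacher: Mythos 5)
Your proposal is correct and follows essentially the same route as the paper: establish $S\,T_k^A=T_k^A\,S$ via the adjoint formula and the cocycle relation from Proposition \ref{pro:SAFT1}, watch the two phases cancel, and then invert $S$ to get $S^{-1}T_k^A\phi=T_k^A(S^{-1}\phi)$. Your explicit attention to the conjugate-linearity of the inner product and to the projective (rather than ordinary) representation is exactly the delicate point the paper's computation silently relies on.
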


\begin{remark}
If we assume $\{T_k^A\phi:k\in \Z\}$ is a Riesz basis for $V_A(\phi)$, then $\{T_k^AS^{-1}\phi : k\in \Z\}$ is the dual Riesz basis of $\{T_k^A\phi:k\in \Z\}$. 
\end{remark}

%\par In the following two theorems we obtain characterizations for the system $\{T_k^A\phi:k\in \Z\}$ to form a Riesz sequence and an orthonormal system.
%Theorem 9.2.5 in the new book

\textcolor{black}{Now we obtain a characterization for the system of $A$-translates $\{T_k^A\phi:k\in \Z\}$ to be a frame sequence in terms of the weight function $w_\phi(\omega):=\sum_{k\in \Z}|\F_A\phi(\omega+2kb\pi)|^2$.}

\textcolor{black}{\begin{theorem}\label{th: frame char}
    Let $\phi\in L^2(\R)$. Then the system of $A$-translates $\{T_k^A\phi:k\in \Z\}$ is a frame sequence with bounds $m,M>0$ if and only if
    \begin{equation}\label{eq: frame char 2}
    \frac{m}{2\pi |b|}\chi_{E_\phi}\leq \sum_{k\in \Z}|\F_A\phi(\omega+2kb\pi)|^2\leq \frac{M}{2\pi |b|}\chi_{E_\phi},\, ~~\omega\in I,
    \end{equation}
    where $E_\phi=\{\omega\in\R:w_\phi(\omega)\neq 0\}$ and $I=[-|b|\pi,|b|\pi]$.
\end{theorem}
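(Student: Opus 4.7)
The plan is to transfer the frame inequality for $\{T_k^A\phi\}$ into a pointwise condition on the $2\pi|b|$-periodic weight $w_\phi$ by passing through the SAFT and Fourier series on $I$. First I would apply Proposition \ref{pro:SAFT1}(iv) and Plancherel for the SAFT to write
$$\langle f, T_k^A\phi\rangle = \overline{\rho_A(k)}\int_\R \F_A f(\omega)\,\overline{\F_A\phi(\omega)}\,e^{ik\omega/b}\,d\omega.$$
Since $|\rho_A(k)|=1$ and $e^{2\pi ikn}=1$ for $k,n\in\Z$, splitting $\R$ into the translates $I+2nb\pi$ and changing variables yields
$$|\langle f, T_k^A\phi\rangle|^2 = \left|\int_I G(\omega)\,e^{ik\omega/b}\,d\omega\right|^2,\qquad G(\omega):=\sum_{n\in\Z}\F_A f(\omega+2nb\pi)\,\overline{\F_A\phi(\omega+2nb\pi)}.$$
Because $\{(2\pi|b|)^{-1/2}\,e^{ik\omega/b}\}_{k\in\Z}$ is an orthonormal basis of $L^2(I)$, Parseval's identity then gives the master identity
$$\sum_{k\in\Z}|\langle f, T_k^A\phi\rangle|^2 = 2\pi|b|\int_I |G(\omega)|^2\,d\omega.$$

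Next I would describe $\F_A V_A(\phi)$ concretely. From $\F_A(T_k^A\phi)=\rho_A(k)\,e^{-ik\omega/b}\F_A\phi$ with $\rho_A(k)\neq 0$, together with the density of trigonometric polynomials in $L^2(I, w_\phi\,d\omega)$, the map $m\mapsto m\cdot \F_A\phi$ (with $m$ extended $2\pi|b|$-periodically from $I$ to $\R$) is an isometry from $L^2(I, w_\phi\,d\omega)$ onto $\F_A V_A(\phi)$. Hence for $f\in V_A(\phi)$, writing $\F_A f=m_f\,\F_A\phi$, the periodicity of $m_f$ collapses the series to $G(\omega)=m_f(\omega)\,w_\phi(\omega)$, while $\|f\|^2=\int_I |m_f|^2 w_\phi\,d\omega$. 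The frame inequality therefore reduces to
$$m\int_I |m_f|^2 w_\phi\,d\omega \;\leq\; 2\pi|b|\int_I |m_f|^2 w_\phi^2\,d\omega \;\leq\; M\int_I |m_f|^2 w_\phi\,d\omega,$$
required for every periodic $m_f\in L^2(I, w_\phi\,d\omega)$.

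To deduce \eqref{eq: frame char 2} from this integrated inequality, I would use a localization argument: if $2\pi|b|\,w_\phi>M$ on a subset $E_+\subset E_\phi$ of positive measure, intersecting with $\{w_\phi\leq N\}$ for large $N$ and taking $m_f=\chi_{E_+\cap\{w_\phi\leq N\}}$ (which lies in $L^2(I, w_\phi\,d\omega)$) contradicts the right-hand inequality, forcing $2\pi|b|\,w_\phi\leq M$ almost everywhere on $E_\phi$; the lower bound follows symmetrically, and outside $E_\phi$ the claim is trivial since $w_\phi\equiv 0$. Conversely, if \eqref{eq: frame char 2} holds, then multiplying by $|m_f|^2 w_\phi$ and integrating over $I$ immediately recovers both frame bounds.

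The main obstacle will be the rigorous identification of $\F_A V_A(\phi)$ with the weighted periodic space $\{m\cdot\F_A\phi : m\text{ periodic},\ \int_I|m|^2 w_\phi<\infty\}$, because $w_\phi$ may vanish on parts of $I$ and need not be bounded; this makes the density of trigonometric polynomials in $L^2(I, w_\phi\,d\omega)$ the delicate point. Once this parametrization of the shift-invariant space is established, the periodization-and-Parseval computation and the localization step are standard and transfer from the classical setting through the SAFT-translation identity of Proposition \ref{pro:SAFT1}(iv).
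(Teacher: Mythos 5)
Your proposal is correct and takes essentially the same route as the paper: periodize $\F_Af\,\overline{\F_A\phi}$, apply Parseval on $L^2(I)$ to get $\sum_{k}|\langle f,T_k^A\phi\rangle|^2=2\pi|b|\int_I|m_f|^2w_\phi^2\,d\omega$ together with $\|f\|^2=\int_I|m_f|^2w_\phi\,d\omega$, and read off the pointwise bounds on $E_\phi$; the paper simply tests with the trigonometric polynomials $r$ arising from finite linear combinations (leaving the passage to the pointwise inequality implicit) instead of parametrizing all of $V_A(\phi)$ by $L^2(I,w_\phi\,d\omega)$ and localizing with characteristic functions as you do. The density issue you flag as the delicate point is in fact harmless, since $w_\phi\in L^1(I)$ makes $w_\phi\,d\omega$ a finite measure on $I$, in which continuous periodic functions, and hence trigonometric polynomials, are dense.
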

\begin{proof}
    Let $\{T_k^A\phi:k\in \Z\}$ be a frame sequence with bounds $m,M>0$. Then
    \begin{equation}\label{eq:frame char 1}
    m\|f\|^2\leq \sum_{k\in \Z}|\langle f,T_k^A\phi\rangle|^2\leq M\|f\|^2, ~~\text{ for all }f\in V_A(\phi).
    \end{equation}
    Let $F$ be a finite subset of $\Z$. Let $f=\sum_{k\in F}c_kT_k^A\phi\in V_A(\phi)$. Then $\F_Af (\omega)=r(\omega)\F_A \phi(\omega)$, where $r(\omega)=\sum_{k\in F}c_k\rho_A(k)e^{-\frac{i}{b}k\omega}$. Thus
    \begin{align}\label{eq:frame char 3}
        \|f\|^2=\langle f,f\rangle = \langle \F_Af, \F_Af\rangle=\int_\R |r(\omega)|^2|\F_A\phi|^2d\omega=\int_I |r(\omega)|^2w_\phi(\omega)d\omega.
    \end{align}
    Similarly,
    \begin{align*}
        \langle f,T_k^A\phi\rangle=\int_\R \F_Af(\omega)\overline{\F_A(T_k^A\phi)(\omega)}d\omega &=\int_\R r(\omega)|\F_A\phi(\omega)|^2\overline{\rho_A(k)}e^{\frac{i}{b}k\omega}d\omega\\
        &=\int_I r(\omega)\overline{\rho_A(k)}w_\phi(\omega)e^{\frac{i}{b}k\omega}d\omega\\
        &=\sqrt{2\pi |b|}\big(r(\omega)w_\phi(\omega)\big)~\widehat{}~(k),
    \end{align*}
    where $\widehat{f}(k)=\int_I f(x)\overline{\rho_A(k)}e^{\frac{i}{b}kx}dx$. Hence
    \begin{equation}\label{eq: frame char 4}
    \sum_{k\in \Z}|\langle f, T_k^A\phi\rangle|^2=2\pi |b|\sum_{k\in \Z}|\big(r(\omega)w_\phi(\omega)\big)~\widehat{}~(k)|^2=2\pi |b|\int_I |r(\omega)w_\phi(\omega)|^2d\omega.
    \end{equation}
    Now, using \eqref{eq:frame char 1}, we get
    \[
    m\int_I |r(\omega)|^2w_\phi(\omega)d\omega\leq 2\pi |b|\int_I |r(\omega)|^2\big(w_\phi(\omega)\big)^2d\omega\leq M\int_I |r(\omega)|^2w_\phi(\omega)d\omega,
    \]
    for every $|b|$-periodic trigonometric polynomial $r$. This implies that
    \[
    \frac{m}{2\pi |b|}w_\phi(\omega)\leq w_\phi(\omega)^2\leq \frac{M}{2\pi |b|}w_\phi(\omega),
    \]
    a. e. $\omega\in I$, from which \eqref{eq: frame char 2} follows.\\
    Conversely, assume that \eqref{eq: frame char 2} holds. Then using \eqref{eq:frame char 3}, \eqref{eq: frame char 4}, we can show that \eqref{eq:frame char 1} holds for all $f\in span\{T_k^A\phi:k\in \Z\}$. Since $span\{T_k^A\phi:k\in \Z\}$ is dense in $V_A(\phi)$, the proof follows.
\end{proof}}

%\textcolor{black}{Now we state characterization results for the system of $A$-translates $\{T_k^A\phi:k\in \Z\}$ to be a Riesz sequence, orthonormal sequence. Similar type of results are proved in \cite{Bhandari}, so we ommit the proof.}

\par \textcolor{black}{
%In \cite{Bhandari}, a characterization is obtained for the system $\{e^{\frac{i}{2b}(t-k)^2}\phi(t-k):k\in \Z\}$ to be a Riesz sequence. 
In a similar way, we can obtain the characterizations for the system $\{T_k^A\phi:k\in \Z\}$ to be a Riesz sequence or an orthonormal system. We state the results without proof. The interested readers can see the proof form \cite{harnandez_weiss} and from \cite{Bhandari}.}
%Further, in \cite{Bhandari}, they used the standard orthogonalization trick and obtained the orthonormal system from the Riesz sequence consisting of $\{e^{\frac{i}{2b}(t-k)^2}\phi(t-k):k\in \Z\}$. On the other hand, we obtain a characterization for the system of $A$-translates $\{T_k^A\phi:k\in \Z\}$ forms an orthonormal system. The proof follows the classical technique [Chapter 1, Proposition 1.11 in \cite{harnandez_weiss}]. However, for the sake of completeness we include the proof in Theorem \ref{th:ons char}.}

\begin{theorem}
Let $\phi \in L^2(\R)$. Then the collection $\{T_k^A\phi:k\in \Z\}$ is a Riesz basis for $V_A(\phi)$ if and only if there are $m,M>0$ such that 
\[
m\leq \sum_{k\in \Z} |\F_A\phi (\omega +2k b\pi)|^2\leq M,
\]
for almost all $\omega \in I.$
%\begin{proof}
%We observe that
%\begin{align*}
%&m\sum_{k\in \Z}|c_k|^2\leq \|\sum_{k\in \Z} c_kT_k^A\phi\|^2\leq M\sum_{k\in \Z}|c_k|^2\\ \Leftrightarrow \ & m\sum_{k\in \Z}|c_k|^2\leq \|\sum_{k\in \Z} c_k\rho_AT_k^A\phi\|^2\leq M\sum_{k\in \Z}|c_k|^2\\
%\Leftrightarrow \ & m\sum_{k\in \Z}|c_k|^2\leq \|\sum_{k\in \Z} \rho_A(k)c_kT_k\rho_A\phi\|^2\leq M\sum_{k\in \Z}|c_k|^2\\
%\Leftrightarrow \ & m\sum_{k\in \Z}|\rho_A(k)c_k|^2\leq \|\sum_{k\in \Z} \rho_A(k)c_kT_k\rho_A\phi\|^2\leq M\sum_{k\in \Z}|\rho_A(k)c_k|^2,\\
%\end{align*}
%using \eqref{rhoT_x^A}. In other words, $\{T_k^A\phi:k\in \Z\}$ is a Riesz sequence iff $\{T_k \rho_A \phi :k\in \Z\}$ is a Riesz sequence. It is well known that $\{T_k\rho_A\phi:k\in \Z\}$ is a Riesz sequence iff there exist $m,M>0$ such that
%\begin{align*}
%m\leq \sum_{k\in \Z} |(\rho_A\phi)~\widehat{}~(\omega +2k\pi)|^2\leq M
%\end{align*}
%$a.e.$ $\omega \in [-\pi,\pi].$ Since $(\rho_A\phi)~\widehat{}~(\omega+2k\pi)=\sqrt{|b|}\overline{\eta}_A(b\omega+2kb\pi)\F_A(\phi)(b\omega +2kb\pi),$ it follows that
%\begin{align*}
%\frac{m}{|b|}\leq \sum_{k\in \Z} |\F_A(\phi)(\omega+2kb\pi)|^2\leq \frac{M}{|b|}
%\end{align*}
%$a.e.$ $\omega \in [-|b|\pi,|b|\pi]=I$.
%\end{proof}
\end{theorem}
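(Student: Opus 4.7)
The plan is to mirror the proof of Theorem~\ref{th: frame char}, converting the Riesz sequence inequality
\[
m\|c\|^2 \leq \Big\|\sum_k c_k T_k^A\phi\Big\|^2 \leq M\|c\|^2
\]
into a pointwise two-sided bound on the weight $w_\phi$ via Plancherel for the SAFT. For a finite combination $f = \sum_{k\in F} c_k T_k^A\phi$, Proposition~\ref{pro:SAFT1}(iv) gives $\F_Af(\omega) = r(\omega)\F_A\phi(\omega)$, where
\[
r(\omega) := \sum_{k\in F} c_k\,\rho_A(k)\, e^{-ik\omega/b}
\]
is $2|b|\pi$-periodic.

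The two key identities I would establish are the following. First, by Plancherel for $\F_A$ together with periodizing $\R$ into translates of $I=[-|b|\pi,|b|\pi]$, exactly as in \eqref{eq:frame char 3},
\[
\|f\|^2 = \int_\R |r(\omega)|^2 |\F_A\phi(\omega)|^2\,d\omega = \int_I |r(\omega)|^2\, w_\phi(\omega)\,d\omega.
\]
Second, setting $d_k := c_k\rho_A(k)$ and using $|\rho_A(k)|=1$, one has $|d_k|=|c_k|$, and since $\{(2|b|\pi)^{-1/2}e^{-ik\omega/b}\}_{k\in\Z}$ is an orthonormal basis for $L^2(I)$, Parseval yields
\[
\int_I |r(\omega)|^2\,d\omega = 2|b|\pi\,\|c\|^2.
\]

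Combining the two identities, the Riesz sequence condition is equivalent to
\[
\frac{m}{2|b|\pi}\int_I |r|^2\,d\omega \leq \int_I |r|^2\,w_\phi\,d\omega \leq \frac{M}{2|b|\pi}\int_I |r|^2\,d\omega
\]
for every trigonometric polynomial $r$ of the above type. By density of such polynomials in $L^2(I)$ and standard approximation, this is in turn equivalent to the pointwise bound $\frac{m}{2|b|\pi} \leq w_\phi(\omega) \leq \frac{M}{2|b|\pi}$ almost everywhere on $I$, which, after renaming constants, is the statement of the theorem. No serious obstacle is anticipated; the proof is a clean adaptation of Theorem~\ref{th: frame char}, the only subtlety being that the chirp phases $\rho_A(k)$ must be absorbed into the auxiliary coefficients $d_k$, which is harmless because $|\rho_A(k)|=1$. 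Note also that the strict positivity of the lower bound forces $w_\phi>0$ a.e.\ on $I$, so no indicator $\chi_{E_\phi}$ is needed here, in contrast to the frame sequence case.
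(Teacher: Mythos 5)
Your proposal is correct and is essentially the proof the paper intends: the same Plancherel-plus-periodization computation as in Theorem~\ref{th: frame char}, with the Gram-type inequality tested against trigonometric polynomials $r(\omega)=\sum_k c_k\rho_A(k)e^{-ik\omega/b}$ and the pointwise bounds on $w_\phi$ extracted by the standard (Fej\'er-kernel/Lebesgue-point) approximation, which is legitimate here since $w_\phi\in L^1(I)$. The paper omits this proof and refers to the classical argument in Hern\'andez--Weiss, which is exactly what you have reproduced, including the correct observation that no indicator $\chi_{E_\phi}$ appears because the lower bound forces $w_\phi>0$ a.e.
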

%\par \textcolor{red}{We notice that the proof of characterizing the system of $A$-translates to be a Riesz sequence follows immediately from the classical result based on translates [Theorem 7.1.7 in \cite{Ole_old}] using the operator $\rho_A$. This kind of result is already proved in \cite{Bhandari}, where the system under consideration is given by $\{e^{\frac{i}{2b}(t-k)^2}\phi(t-k):k\in \Z\}$, using chirp modulation. Further, in \cite{Bhandari}, they used the standard orthogonalization trick and obtained the orthonormal system from the Riesz sequence consisting of $\{e^{\frac{i}{2b}(t-k)^2}\phi(t-k):k\in \Z\}$. On the other hand, our proof follows the classical technique [Chapter 1, Proposition 1.11 in \cite{harnandez_weiss}]. However, for the sake of completeness we include the proof in Theorem \ref{th:ons char}.}
\begin{theorem}\label{th:ons char}
Let $\phi \in L^2(\R)$. Then the collection $\{T_k^A\phi : k \in \Z\}$ is an orthonormal system in $L^2(\R)$ if and only if
\begin{equation}\label{eq:ons}
\sum_{k\in \Z} |\F_A\phi(\omega + 2kb\pi)|^2=\frac{1}{2\pi |b|},
\end{equation}
for almost all $\omega \in I.$
%\begin{proof}
%Let $\{T_k^A\phi : k \in \Z\}$ be an orthonormal system in $L^2(\R)$. Then for $k\in \Z$,
%\begin{align*}
%\delta_{k,0}=\int_\R T_k^A\phi (t)\overline{\phi(t)}dt&=\int_\R \F_A(T_k^A\phi)(\omega)\overline{\F_A(\phi)(\omega)}d\omega\\
%&=\int_\R \rho_A(k)e^{-\frac{i}{b}k\omega}|\F_A(\phi)(\omega)|^2d\omega\\
%&=2\pi |b|\rho_A(k)\frac{1}{2\pi |b|}\int_I e^{-\frac{i}{b}k\omega}\sum_\ell |\F_A(\phi)(\omega+2\ell b\pi)|^2d\omega,
%\end{align*}
%using \eqref{gtsaft}. Let $F(\omega)=\sum_\ell |\F_A(\phi)(\omega+2\ell b\pi)|^2$. Since $\phi \in L^2(\R)$, $F\in L^1(I)$. Further, the Fourier coefficients of $F$ at $k\in \Z$ is $\frac{1}{2\pi |b|}\delta_{k,0}$. Thus the uniqueness of Fourier coefficients gives $\sum_\ell |\F_A(\phi)(\omega +2\ell b \pi)|^2=\frac{1}{2\pi |b|}$.  
%
%For the converse assume (\ref{eq:ons}) and consider
%\begin{align*}
%\langle T_k^A\phi, T_\ell ^A\phi\rangle &=\int_\R \F_A(T_k^A\phi)(\omega)\overline{\F_A(T_\ell^A\phi)(\omega)}d\omega\\
%&=\int_\R \rho_A(k)\bar{\rho}_A(\ell)e^{-\frac{i}{b}(k-\ell)\omega}|\F_A\phi (\omega)|^2d\omega\\
%&=\int_I \rho_A(k)\bar{\rho}_A(\ell)e^{-\frac{i}{b}(k-\ell)\omega}\sum_{n\in \Z}|\F_A\phi (\omega +2nb\pi)|^2d\omega\\
%&=\langle E_k, E_\ell \rangle_{L^2(I)},
%\end{align*}
%where $E_k(\omega)=\frac{1}{\sqrt{2\pi |b|}}\rho_A(k)e^{-\frac{i}{b}k\omega}$. Since $\{E_k:k\in \Z\}$ is an orthonormal basis for $L^2(I)$, the result follows. 
%\end{proof}
\end{theorem}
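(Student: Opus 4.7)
The plan is to compute the inner products $\langle T_j^A\phi, T_k^A\phi\rangle$ explicitly using the SAFT as an isometry, periodize, and then read off the characterization from the uniqueness of Fourier coefficients on the period interval $I=[-|b|\pi,|b|\pi]$.

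First, I would invoke Plancherel's identity for $\F_A$ together with the intertwining relation from Proposition \ref{pro:SAFT1}(iv), namely $\F_A(T_x^A f)(\omega)=\rho_A(x)e^{-ix\omega/b}\F_A f(\omega)$, to obtain
\begin{equation*}
\langle T_j^A\phi, T_k^A\phi\rangle
=\rho_A(j)\overline{\rho_A(k)}\int_\R e^{-i(j-k)\omega/b}|\F_A\phi(\omega)|^2\,d\omega.
\end{equation*}
Next, I would partition $\R$ into the translates $I+2mb\pi$, $m\in\Z$. Substituting $\omega=\omega'+2mb\pi$ and noting that $e^{-i(j-k)2m\pi}=1$, each piece contributes $\int_I e^{-i(j-k)\omega'/b}|\F_A\phi(\omega'+2mb\pi)|^2\,d\omega'$. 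Summing (Fubini/Tonelli is legitimate since all terms are nonnegative after taking moduli, and the $L^2$ norm of $\F_A\phi$ is finite) yields
\begin{equation*}
\langle T_j^A\phi, T_k^A\phi\rangle
=\rho_A(j)\overline{\rho_A(k)}\int_I e^{-i(j-k)\omega/b}\,w_\phi(\omega)\,d\omega.
\end{equation*}

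The second key ingredient is that $\{(2\pi|b|)^{-1/2}e^{ik\omega/b}\}_{k\in\Z}$ is an orthonormal basis of $L^2(I)$, so the $n$-th Fourier coefficient of a function $F\in L^1(I)$ is $\widehat{F}(n)=(2\pi|b|)^{-1}\int_I e^{-in\omega/b}F(\omega)\,d\omega$. If $w_\phi=\frac{1}{2\pi|b|}$ a.e.\ on $I$, a direct calculation shows the displayed integral equals $\delta_{j,k}$, and since the prefactor $\rho_A(j)\overline{\rho_A(k)}$ equals $1$ when $j=k$, the system is orthonormal. Conversely, orthonormality forces $\int_I e^{-in\omega/b}w_\phi(\omega)\,d\omega=0$ for every $n\neq 0$ (the unimodular prefactor $\rho_A(j)\overline{\rho_A(k)}$ is nonzero and can be divided out) and equals $1$ for $n=0$; hence every Fourier coefficient of $w_\phi-\frac{1}{2\pi|b|}$ vanishes, and completeness of the exponential system in $L^2(I)$ (noting $w_\phi\in L^1(I)$ since $\phi\in L^2(\R)$) gives \eqref{eq:ons}.

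The only subtle point I anticipate is bookkeeping the unimodular chirp factor $\rho_A(j)\overline{\rho_A(k)}=e^{\frac{i}{2b}(a(j^2-k^2)+2p(j-k))}$: although it depends on $j+k$ as well as $j-k$, it has modulus one and equals $1$ when $j=k$, so it does not interfere either with the sufficiency direction or with extracting vanishing of all nonzero Fourier coefficients of $w_\phi$ in the necessity direction. The rest is a routine periodization argument exactly parallel to the classical case.
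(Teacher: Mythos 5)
Your argument is correct and is essentially the approach the paper intends: the paper states this theorem without proof, remarking that it follows ``in a similar way'' to Theorem~\ref{th: frame char}, whose proof rests on exactly the two ingredients you use, namely the intertwining relation $\F_A(T_k^A\phi)(\omega)=\rho_A(k)e^{-ik\omega/b}\F_A\phi(\omega)$ combined with Plancherel for $\F_A$, followed by periodization of $|\F_A\phi|^2$ over the translates $I+2mb\pi$. The only cosmetic remark is that in the converse direction $w_\phi-\frac{1}{2\pi|b|}$ is a priori only in $L^1(I)$, so the last step should invoke uniqueness of Fourier coefficients for $L^1$ functions rather than completeness of the exponentials in $L^2(I)$; this changes nothing of substance.
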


\section{Sampling in $A$-shift invariant spaces}

In order to get an equivalent condition for the stable set of sampling in terms of the Zak transform, we first introduce $A$-Zak transform.
\begin{definition}
The $A$-Zak transform $\Za_Af $ of a function $f\in L^2(\R)$ is a function on $\R ^2$, defined as
$$
\Za _Af(t, \omega)=\frac{\overline{\eta}_A(\omega)}{\sqrt{2\pi |b|}}\sum_{k\in \Z} T_k^Af(t)e^{-\frac{i}{2b}(ak^2-2k\omega +2pk)},~t, \omega \in \R.
$$ 
One can simplify the right hand side and get 
$$
\Za_Af(t,\omega)=\frac{\overline{\eta_A}(\omega)}{\sqrt{2\pi |b|}}\sum_{k\in \Z} f(t-k)e^{\frac{i}{2b}(ak^2-2akt+2k\omega -2pk)},\quad \text{for}~ t, \omega \in \R.
$$
\end{definition}
\begin{remark}
In particular, if we take $A=\{0,1,-1,0,0,0\}$, then $A$-Zak transform reduces to the classical Zak transform.
\end{remark}

\begin{theorem}
The $A$-Zak transform is an isometry between the spaces $L^2(\R)$ and $L^2\big([0,1]\times I\big)$.
%\begin{proof}
%We compute
%\begin{align*}
%&\int_I |\Za _Af(t,\omega)|^2d\omega\\
%&=\frac{1}{2\pi |b|}\int_I|\eta_A(\omega)|^2\sum_{k\in \Z} \sum_{\ell \in \Z}  f(t-k)\overline{f(t-\ell)}e^{\frac{i}{2b}(ak^2-2akt+2k\omega-2pk)}\\
%&\times e^{-\frac{i}{2b}(a\ell^2-2a\ell t+2\ell \omega -2p\ell)} d\omega\\
%&=\frac{1}{2\pi |b|} \sum_{k\in \Z} \sum_{\ell \in \Z} f(t-k)\overline{f(t-\ell)}e^{\frac{i}{2b}(a(k^2-\ell^2)-2at(k-\ell)-2p(k-\ell))}\int_Ie^{-\frac{i}{2b}(k-\ell)\omega}d\omega\\
%&= \sum_{k \in \Z} |f(t-k)|^2.
%\end{align*}
%Thus
%\begin{align*}
%\|\Za_Af\|^2=\int_0^1\int_I |\Za_A f(t,\omega)|^2d\omega dt = \sum_{k\in \Z} \int_k^{k+1} |f(t)|^2dt=\|f\|^2.
%\end{align*}
%\end{proof}
\end{theorem}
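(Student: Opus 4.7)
The plan is to reduce the problem to the classical Zak transform isometry by recognizing $\Za_A$ as a unimodular modulation of the classical Zak transform applied to the chirped function $C_{a/b}f$. Recall that the classical Zak transform $Zg(t,\nu)=\sum_{k\in\Z}g(t-k)e^{2\pi i k\nu}$ is a unitary isomorphism from $L^2(\R)$ onto $L^2([0,1]^2)$, and is $1$-periodic in the frequency variable $\nu$.

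First I would complete the square in the chirp exponent. Since $ak^2-2akt=a(t-k)^2-at^2$, the definition of $\Za_Af$ rearranges to
\begin{equation*}
\Za_A f(t,\omega)=\frac{\overline{\eta_A}(\omega)\, e^{-iat^2/(2b)}}{\sqrt{2\pi|b|}}\sum_{k\in\Z}(C_{a/b}f)(t-k)\, e^{ik(\omega-p)/b}.
\end{equation*}
Introducing the new frequency variable $\nu=(\omega-p)/(2\pi b)$ so that $e^{ik(\omega-p)/b}=e^{2\pi i k\nu}$, the inner sum is exactly $Z(C_{a/b}f)(t,\nu)$. Hence
\begin{equation*}
\Za_A f(t,\omega)=\frac{\overline{\eta_A}(\omega)\, e^{-iat^2/(2b)}}{\sqrt{2\pi|b|}}\,Z(C_{a/b}f)\!\bigl(t,(\omega-p)/(2\pi b)\bigr).
\end{equation*}

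Now I would compute the $L^2([0,1]\times I)$-norm. The prefactors $\overline{\eta_A}(\omega)$ and $e^{-iat^2/(2b)}$ are unimodular and disappear in modulus. The substitution $\omega=2\pi b\nu+p$ sends the interval $I=[-|b|\pi,|b|\pi]$ onto an interval $J$ of length exactly $1$ in $\nu$, and produces the Jacobian $|d\omega|=2\pi|b|\,d\nu$, which cancels the factor $(2\pi|b|)^{-1}$. Thus
\begin{equation*}
\|\Za_A f\|_{L^2([0,1]\times I)}^2=\int_0^1\!\!\int_J |Z(C_{a/b}f)(t,\nu)|^2\,d\nu\,dt.
\end{equation*}
Using $1$-periodicity of $Z(C_{a/b}f)$ in $\nu$, the integral over $J$ equals the integral over $[0,1]$, and the classical Zak isometry together with $|C_{a/b}f|=|f|$ yields $\|\Za_A f\|_{L^2([0,1]\times I)}^2=\|C_{a/b}f\|_2^2=\|f\|_2^2$.

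The main technical point is justifying the reduction of the $\nu$-interval $J$ to $[0,1]$ via periodicity; once this is in place, everything else is bookkeeping with unimodular factors and a linear change of variables. If one reads \emph{isometry} as \emph{unitary} (onto), surjectivity requires no extra work, since the three operations $f\mapsto C_{a/b}f$, $g\mapsto Zg$, and multiplication by the explicit unimodular/affine factor are each bijective between the relevant $L^2$ spaces, so the composition $\Za_A$ is as well.
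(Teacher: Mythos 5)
Your argument is correct. The identity $ak^2-2akt=a(t-k)^2-at^2$ does convert the paper's kernel into $(C_{a/b}f)(t-k)\,e^{ik(\omega-p)/b}$ times the unimodular factors $\overline{\eta_A}(\omega)e^{-iat^2/(2b)}$, the substitution $\nu=(\omega-p)/(2\pi b)$ maps $I$ onto a length-one interval with Jacobian $2\pi|b|$ cancelling the normalization $(2\pi|b|)^{-1}$, and the $1$-periodicity of $Z(C_{a/b}f)$ in $\nu$ legitimately replaces $J$ by $[0,1]$; combined with unitarity of the classical Zak transform and $|C_{a/b}f|=|f|$ this gives the isometry, and surjectivity follows as you say from composing bijections. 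The paper itself offers no proof here — it simply cites Bhandari--Zayed — so there is nothing internal to compare against; but your reduction by conjugation with the chirp $C_{a/b}$ is precisely the mechanism the authors use throughout (e.g.\ for the convolution identity \eqref{eq:A-convolution}, the approximate identity, and Wendel's theorem), so your proof is both self-contained and in the spirit of the rest of the text. One small point worth making explicit if you write this up: the interchange of summation and the completion of the square should be justified first for $f$ in a dense class (say continuous with compact support), with the general case following because both $\Za_A$ and the right-hand side are bounded by the same isometric identity on that dense class.
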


See \cite{Bhandari} for the proof.\\

Define an operator $T:L^2(I)\to V_A(\phi) $ by 
$$
TF(x)=\sum_{k\in \Z} \langle F,E_k\rangle T_{-k}^A\phi (x), \quad F\in L^2(I),
$$
where $E_k(t)=\frac{\eta_A(t)}{\sqrt{2\pi|b|}}e^{\frac{i}{2b}(ak^2-2pk+2kt)}$. Clearly $\{E_k:k\in \Z\}$ is an orthonormal basis for $L^2(I).$
\par Suppose $\{T_k^A\phi:k\in \Z\}$ is a Riesz sequence. Then there are constants $m,M>0$ such that
$$
m\big(\sum_{k \in \Z} |\langle F, E_k \rangle |^2\big)^{1/2}\leq \|\sum_{k \in \Z} \langle F, E_k \rangle T_{-k}^A\phi\|_{L^2(\R)}\leq M\big(\sum_{k \in \Z}|\langle F, E_k\rangle |^2\big)^{1/2},
$$
for all $F\in L^2(I)$. Since $\{E_k:k\in \Z\}$ is an orthonormal basis for $L^2(I)$ and $F\in L^2(I)$, the above inequality reduces to 
$$
m\|F\|_{L^2(I)}\leq \|TF\|_{L^2(\R)}\leq M\|F\|_{L^2(I)}, \quad \forall~ F\in L^2(I).
$$
This shows that $T$ is bounded above and bounded below. By Riesz-Fischer theorem $T$ is onto. Hence $T$ is invertible. Moreover, we have
\begin{align} 
\nonumber TF(x)&=  \sum_{k \in \Z} \langle F, E_k \rangle T_{-k}^A \phi (x)=\big\langle F, \frac{\eta_A(\cdot)}{\sqrt{2\pi |b|}}\sum_{k \in \Z} e^{\frac{i}{2b}(ak^2-2pk+2k\cdot)}\overline{T_{-k}^A\phi(x)} \big\rangle\\ 
&= \langle F, \overline{\Za_A\phi(x, \cdot)}\rangle. \label{TZak}
\end{align}
\par Now, we are in a position to prove equivalent conditions for stable set of sampling for a shift invariant space $ V_A(\phi).$

\begin{theorem} \label{Seq}
Assume that $V_A(\phi)$ is a reproducing kernel Hilbert space, for $\phi \in L^2(\R)$, such that $\{T_k^A\phi:k\in \Z\}$ forms a Riesz basis for $V_A(\phi)$. Then the following statements are equivalent.
\begin{itemize}
\item[(i)] The set $\mathcal{X}=\{x_j:j\in \Z\}$ is a stable set of sampling for $V_A(\phi).$
\item[(ii)] There are constants $m,M>0$ such that
$$
m\|d\|_{\ell^2(\Z)}^2\leq \|Ud\|_{\ell^2(\Z)}^2\leq M\|d\|_{\ell^2(\Z)}^2, \quad \forall~ d\in \ell^2(\Z),
$$
where the operator $U=(U_{j,k})$ is defined by $$U_{j,k}=T_k^A\phi(x_j)=e^{-i\frac{a}{b}k(x_j-k)}\phi(x_j-k).$$
\item[(iii)] The set of reproducing kernels $\{K_{x_j}:j\in \Z\}$ for $V_A(\phi)$ is a frame for $V_A(\phi)$.
\item[(iv)] The set $\{\Za_A\phi(x_j, \cdot):j\in \Z\}$ is a frame for $L^2(I)$.
\end{itemize}
\begin{proof}
The proof is similar to the classical case. However for the sake of completion, we give the outline of the proof. (i) $\Leftrightarrow$ (ii): If $f\in V_A(\phi)$ then there is $d=\{d_k\}\in \ell^2(\Z)$ such that $f(x)= \sum_{k \in \Z}d_kT_k^A\phi(x)$, and hence $f(x_j)=\sum_{k \in \Z} d_k T_k^A\phi (x_j)=(Ud)_j$. Since $\{T_k^A\phi : k\in \Z\}$ is a Riesz basis for $V_A(\phi)$, the following statements are equivalent.
\begin{itemize}
\item[(a)] There are constants $m,M>0$ such that for every $f\in V_A(\phi)$
$$m\|f\|^2\leq \sum_{j \in \Z} |f(x_j)|^2\leq M\|f\|^2.$$
\item[(b)] There are constants $m^\prime , M^\prime >0$ such that for $d\in \ell^2(\Z)$
$$
m^\prime \|d\|^2 \leq \|Ud\|^2\leq M^\prime \|d\|^2.
$$
\end{itemize} 
In fact, if (a) holds, then 
$$
m\|f\|^2\leq \|Ud\|^2\leq M \|f\|^2.
$$
Now by using the fact that $\{T_k^A\phi :k\in \Z\}$ is a Riesz sequence, one can find $m^{\prime \prime}, M^{\prime \prime}>0$ such that $m^{\prime \prime}\|d\|^2\leq\|f\|^2\leq M^{\prime \prime} \|d\|^2 $. Thus (b) follows from (a). Similarly one can prove (b) implies (a).\\
(i)$\Leftrightarrow $ (iii): Let $\{K_{x_j}:j\in \Z\}$ be the set of reproducing kernels for $V_A(\phi)$. Then the equivalence follows from the identity $f(x_j)=\langle f, K_{x_j}\rangle$.\\
(iii)$\Leftrightarrow $ (iv): Using (\ref{TZak}) we obtain 
$$
\sum_{j\in \Z}|f(x_j)|^2=\sum_{j\in \Z} |\langle f, K_{x_j}\rangle |^2=\sum_{j\in \Z} |\langle F, \overline{\Za_A\phi (x_j,\cdot)}\rangle |^2,
$$
here $TF=f$. Since $T$ is invertible, we get the equivalence of (iii) and (iv).
\end{proof}
\end{theorem}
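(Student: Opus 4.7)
The plan is to prove the four equivalences in a triangle $(\mathrm{i})\Leftrightarrow(\mathrm{ii})$, $(\mathrm{i})\Leftrightarrow(\mathrm{iii})$, $(\mathrm{iii})\Leftrightarrow(\mathrm{iv})$, each exploiting one of the structures available on $V_A(\phi)$: the Riesz basis $\{T_k^A\phi\}$, the reproducing kernel, and the operator $T:L^2(I)\to V_A(\phi)$ introduced just before the theorem. For $(\mathrm{i})\Leftrightarrow(\mathrm{ii})$ I would use that the Riesz basis property gives a bijective correspondence $f=\sum_{k\in\Z} d_k\,T_k^A\phi \leftrightarrow d=\{d_k\}\in\ell^2(\Z)$ with $\|f\|^2\asymp\|d\|_{\ell^2}^2$. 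A direct substitution yields $f(x_j)=\sum_k d_k\,T_k^A\phi(x_j)=(Ud)_j$, so $\sum_j |f(x_j)|^2=\|Ud\|_{\ell^2}^2$. The Riesz bounds then allow $\|f\|^2$ in the sampling inequality to be interchanged with $\|d\|^2$ in both directions (with constants adjusted), which gives the equivalence.

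For $(\mathrm{i})\Leftrightarrow(\mathrm{iii})$ I would use the reproducing identity $f(x_j)=\langle f, K_{x_j}\rangle$, which holds because $V_A(\phi)$ was already shown to be a RKHS under these hypotheses. Inserting this into the definition of stable sampling rewrites the two-sided inequality verbatim as the frame inequality for $\{K_{x_j}\}$, with identical constants.

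For $(\mathrm{iii})\Leftrightarrow(\mathrm{iv})$ I would transfer the frame property from $V_A(\phi)$ to $L^2(I)$ through the operator $T$. Since $\{E_k\}$ is an orthonormal basis of $L^2(I)$ and $\{T_k^A\phi\}$ is a Riesz basis of $V_A(\phi)$, the operator $T$ is bounded above and below, hence boundedly invertible. The identity \eqref{TZak} rewrites $TF(x)=\langle F,\overline{\Za_A\phi(x,\cdot)}\rangle$, so if $f=TF$ then $f(x_j)=\langle F,\overline{\Za_A\phi(x_j,\cdot)}\rangle$ and $\|f\|_{L^2(\R)}\asymp\|F\|_{L^2(I)}$. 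The frame condition for $\{K_{x_j}\}$ in $V_A(\phi)$ is therefore equivalent, after absorbing $\|T\|$ and $\|T^{-1}\|$ into the constants, to the frame condition for $\{\overline{\Za_A\phi(x_j,\cdot)}\}$, and since complex conjugation is an isometry of $L^2(I)$ this is the same as $\{\Za_A\phi(x_j,\cdot)\}$ being a frame.

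The main obstacle, such as it is, is bookkeeping: propagating the Riesz constants, the operator norms of $T$ and $T^{-1}$, and the reproducing-kernel identity through the chain of equivalences in a way that is not circular. Conceptually nothing is deep here; all the substantive inputs (the RKHS property of $V_A(\phi)$, the isometry property of $\Za_A$, and the representation of $T$ as a pairing with $\overline{\Za_A\phi(x,\cdot)}$) have already been set up before the statement, so the proof is essentially a sequence of clean one-line translations.
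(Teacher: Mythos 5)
Your proposal is correct and follows essentially the same route as the paper: (i)$\Leftrightarrow$(ii) via the coefficient map $f\leftrightarrow d$ and the Riesz bounds, (i)$\Leftrightarrow$(iii) via $f(x_j)=\langle f,K_{x_j}\rangle$, and (iii)$\Leftrightarrow$(iv) by transferring the frame inequality through the boundedly invertible operator $T$ using the identity $TF(x)=\langle F,\overline{\Za_A\phi(x,\cdot)}\rangle$. The only detail worth making explicit, which both you and the paper pass over quickly, is that passing from $\{\overline{\Za_A\phi(x_j,\cdot)}\}$ to $\{\Za_A\phi(x_j,\cdot)\}$ uses that $F\mapsto\overline{F}$ is a bijective antilinear isometry of $L^2(I)$ preserving the frame inequality.
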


\par Let $\phi \in W(C, \ell^1(\Z))$. Then we define the function $\phi_A^\dagger $ on the interval $I$, by
$$
\phi_A^\dagger (\omega)=\sum_{n\in \Z} \phi(n)\rho_A(n)e^{-\frac{i}{b}n\omega}.
$$
From the definition of the $A$-Zak transform we obtain $\Za_A\phi(0,\omega)=\frac{\overline{\eta_A}(\omega)}{\sqrt{2\pi|b|}}\phi_A^\dagger (\omega)$.
\begin{theorem} \label{Zss}
Let $\phi \in W\big(C,\ell^1(\Z)\big)$. Then the operator $U:\ell^2(\Z)\to \ell^2(\Z)$ defined by $U_{j,k}=T_k^A\phi(j)$ satisfies the inequalities
$$
\|\phi_A^\dagger\|_0^2~\|d\|^2\leq \|Ud\|^2\leq \|\phi_A^\dagger \|_\infty ^2~\|d\|^2, \quad \forall~ d\in \ell^2(\Z),
$$
where $\|\phi_A^\dagger \|_0=\inf_{x\in I}|\phi_A^\dagger (x)|,~\|\phi_A^\dagger \|_\infty =\sup_{x\in I}|\phi_A^\dagger (x)|.$
\begin{proof}
Let $d=\{d_n\}\in \ell^2(\Z)$. Then 
$$
(Ud)_n = \sum_{m\in \Z} U_{n,m}d_m = \sum_{m\in \Z} e^{-\frac{i}{b}am(n-m)}\phi (n-m)d_m.
$$
Since $\{\frac{1}{\sqrt{2\pi |b|}}\rho_A(n)e^{-\frac{i}{b}n\omega} :n\in \Z\}$ is an orthonormal basis for $L^2(I)$, we have
\begin{align*}
2\pi |b|~ \|Ud\|^2&= \int_I |\sum_{n\in \Z} (Ud)_n\rho_A(n)e^{-\frac{i}{b}n\omega} |^2 d\omega\\
&= \int_I|\sum_{n\in \Z} \sum_{m\in \Z} e^{-\frac{i}{b}am(n-m)}\phi(n-m)d_m\rho_A(n)e^{-\frac{i}{b}n\omega}|^2d\omega \\
&= \int_I |\sum_{n\in \Z} \sum_{m\in \Z} \rho_A(n-m)\phi(n-m)d_m\rho_A(m)e^{-\frac{i}{b}n\omega}|^2d\omega \\
&=\int_I |\sum_{n\in \Z} \sum_{m\in \Z} \rho_A(n)\phi(n)d_m\rho_A(m)e^{-\frac{i}{b}(m+n)\omega}|^2d\omega \\
&= \int_I |\phi_A^\dagger (\omega)|^2 |\sum_{m\in \Z}d_m\rho_A(m)e^{-\frac{i}{b}m\omega}|^2d\omega.
\end{align*}
This implies 
\begin{align*}
\frac{\|\phi_A^\dagger \|_0^2}{2\pi |b|}\int_I |\sum_{m\in \Z}d_m\rho_A(m)e^{-\frac{i}{b}m\omega }|^2d\omega &\leq \|Ud\|^2 \\ &\leq \frac{\|\phi_A^\dagger \|^2_\infty}{2\pi |b|}\int_I |\sum_{m\in \Z} d_m \rho_A(m)e^{-\frac{i}{b}m\omega}|^2d\omega
\end{align*}

or equivalently $\|\phi_A^\dagger \|_0^2\sum_{m\in \Z}|d_m|^2\leq \|Ud\|^2\leq \|\phi_A^\dagger \|_\infty ^2\sum_{m\in \Z} |d_m|^2$, from which the result follows.
\end{proof}
\end{theorem}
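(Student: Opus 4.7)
The plan is to compute $\|Ud\|^2$ by lifting it to $L^2(I)$ via the Parseval/Plancherel identity for the orthonormal basis
$\left\{\tfrac{1}{\sqrt{2\pi|b|}}\rho_A(n)\, e^{-in\omega/b}:n\in\Z\right\}$, then to factor the resulting double sum into the product $\phi_A^\dagger(\omega)\cdot\left(\sum_m d_m\rho_A(m)e^{-im\omega/b}\right)$, and finally to sandwich $|\phi_A^\dagger(\omega)|$ between $\|\phi_A^\dagger\|_0$ and $\|\phi_A^\dagger\|_\infty$ before applying Parseval a second time in the reverse direction.

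First I would write out
\[
(Ud)_n=\sum_{m\in\Z}e^{-\tfrac{i}{b}am(n-m)}\phi(n-m)\, d_m,
\]
directly from the definition of $U_{j,k}=T_k^A\phi(j)$ and the formula $T_k^A\phi(j)=e^{-i\tfrac{a}{b}k(j-k)}\phi(j-k)$. Using the orthonormality of $\{(2\pi|b|)^{-1/2}\rho_A(n)e^{-in\omega/b}\}_{n\in\Z}$ in $L^2(I)$ I would express
\[
2\pi|b|\,\|Ud\|^2=\int_I\Big|\sum_{n\in\Z}(Ud)_n\, \rho_A(n)\, e^{-in\omega/b}\Big|^2 d\omega.
\]

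The key algebraic step, which I view as the main technical point, is the identity
\[
\rho_A(n-m)\,\rho_A(m)=e^{-\tfrac{i}{b}am(n-m)}\,\rho_A(n),
\]
a direct consequence of expanding the quadratic phases in $\rho_A$. This lets me rewrite the integrand as
\[
\sum_{n,m}\rho_A(n-m)\phi(n-m)\, d_m\,\rho_A(m)\, e^{-in\omega/b},
\]
and then, after the change of variable $k=n-m$ (legitimate because $\phi\in W(C,\ell^1(\Z))$ gives absolute convergence in the $n$-sum for every fixed $m$, and $d\in\ell^2$ controls the $m$-sum), the double series factors as $\phi_A^\dagger(\omega)\cdot\sum_{m}d_m\rho_A(m)\, e^{-im\omega/b}$.

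Once this factorization is in place, the proof finishes mechanically: I sandwich $|\phi_A^\dagger(\omega)|^2$ in the integrand between $\|\phi_A^\dagger\|_0^2$ and $\|\phi_A^\dagger\|_\infty^2$, pull the constant bounds outside the integral, and then apply Parseval once more to the remaining integral $\int_I|\sum_m d_m\rho_A(m)e^{-im\omega/b}|^2\, d\omega$, which equals $2\pi|b|\sum_m|d_m|^2=2\pi|b|\,\|d\|^2$. Cancelling the factor $2\pi|b|$ yields the claimed two-sided estimate. The only nontrivial obstacle is justifying the interchange of summations underlying the factorization; the hypothesis $\phi\in W(C,\ell^1(\Z))$ is used precisely here, since it guarantees $\{\phi(n)\}\in\ell^1(\Z)$ and hence absolute convergence of the relevant double series.
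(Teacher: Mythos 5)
Your proposal is correct and follows essentially the same route as the paper's proof: Parseval with respect to the basis $\{(2\pi|b|)^{-1/2}\rho_A(n)e^{-in\omega/b}\}$, the phase identity $\rho_A(n-m)\rho_A(m)=e^{-\frac{i}{b}am(n-m)}\rho_A(n)$ to factor the double sum into $\phi_A^\dagger(\omega)\sum_m d_m\rho_A(m)e^{-im\omega/b}$, and a second application of Parseval after sandwiching $|\phi_A^\dagger(\omega)|^2$. Your explicit justification of the summation interchange via $\phi\in W(C,\ell^1(\Z))$ is a point the paper leaves implicit.
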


As a consequence we obtain the following
\begin{corollary}
Let $\phi \in W(C, \ell^1(\Z))$ be such that $\{T_k^A\phi :k\in\Z\}$ forms a Riesz basis for $V_A(\phi)$ and $\phi_A^\dagger (x)\neq 0$ for all $x\in I$. Then $\Z$ is a stable set of sampling for $V_A(\phi).$
\begin{proof}
Since $\phi_A^\dagger(x)\neq 0$ for all $x\in I$, it follows from Theorem \ref{Zss} that $U$ is bounded above and below. Then the assertion follows from Theorem \ref{Seq}.
\end{proof}
\end{corollary}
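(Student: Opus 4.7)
The plan is to chain together the two previous results: Theorem \ref{Zss} gives two-sided bounds for $U$ in terms of $\|\phi_A^\dagger\|_0$ and $\|\phi_A^\dagger\|_\infty$, while the equivalence (i)$\Leftrightarrow$(ii) in Theorem \ref{Seq} converts a two-sided bound on $U$ into the statement that $\Z$ is a stable set of sampling. So the only genuine work lies in checking that, under the stated hypotheses, $\|\phi_A^\dagger\|_0$ is strictly positive and $\|\phi_A^\dagger\|_\infty$ is finite.

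First I would observe that $\phi\in W(C,\ell^1(\Z))$ implies in particular $\sum_{n\in\Z}|\phi(n)|<\infty$, since $|\phi(n)|\le \max_{x\in[0,1]}|\phi(x+n)|$. Combined with $|\rho_A(n)|=1$, this shows that the series defining
\[
\phi_A^\dagger(\omega)=\sum_{n\in\Z}\phi(n)\rho_A(n)e^{-in\omega/b}
\]
converges absolutely and uniformly on $\R$, so $\phi_A^\dagger$ is continuous on the compact interval $I=[-|b|\pi,|b|\pi]$. Continuity and compactness together with the non-vanishing assumption $\phi_A^\dagger(x)\neq 0$ on $I$ then force $\|\phi_A^\dagger\|_0=\min_{x\in I}|\phi_A^\dagger(x)|>0$ and $\|\phi_A^\dagger\|_\infty=\max_{x\in I}|\phi_A^\dagger(x)|<\infty$.

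Next I would invoke Theorem \ref{Zss} directly: it yields
\[
\|\phi_A^\dagger\|_0^{2}\,\|d\|^{2}\le \|Ud\|^{2}\le \|\phi_A^\dagger\|_\infty^{2}\,\|d\|^{2},\qquad d\in\ell^2(\Z),
\]
with strictly positive finite constants by the previous paragraph. Taking $m=\|\phi_A^\dagger\|_0^2$ and $M=\|\phi_A^\dagger\|_\infty^2$, this is precisely condition (ii) of Theorem \ref{Seq} for $\mathcal{X}=\Z$ (noting that $x_j=j$ gives $U_{j,k}=T_k^A\phi(j)$, matching the setup of Theorem \ref{Zss}). Since $\{T_k^A\phi:k\in\Z\}$ is assumed to be a Riesz basis for $V_A(\phi)$ and $V_A(\phi)$ is therefore a RKHS (by the corollary following the first theorem of Section 4, using $\phi\in W(C,\ell^1(\Z))$), Theorem \ref{Seq} applies and the equivalence (ii)$\Rightarrow$(i) delivers the desired stable sampling inequality for $f\in V_A(\phi)$.

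The only potentially delicate point is the positivity of $\|\phi_A^\dagger\|_0$; this is where the Wiener amalgam regularity of $\phi$ (which supplies continuity of $\phi_A^\dagger$) interacts with the hypothesis that $\phi_A^\dagger$ does not vanish on $I$. Once this is in place, the corollary follows immediately, with no further computation needed.
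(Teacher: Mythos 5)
Your argument is correct and follows essentially the same route as the paper: apply Theorem \ref{Zss} to get two-sided bounds on $U$ and then invoke the equivalence (i)$\Leftrightarrow$(ii) of Theorem \ref{Seq}. The only difference is that you spell out the step the paper leaves implicit, namely that $\phi\in W(C,\ell^1(\Z))$ makes $\phi_A^\dagger$ continuous on the compact interval $I$, so non-vanishing indeed yields $\|\phi_A^\dagger\|_0>0$; this is a worthwhile clarification but not a different proof.
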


\par \textcolor{black}{We end this section by proving Bernstein type inequality for $V_A(\phi)$. Let $\mathcal{A}$ denote the class of continuously differentiable functions $\phi$ such that
\begin{itemize}
    \item[(i)] $|\phi(x)|\leq \frac{M_1}{|x|^{0.5+\epsilon}}$ and $|\phi^\prime(x)|\leq \frac{M_2}{|x|^{0.5+\epsilon}}$, for sufficiently large $x$, for some $M_1, M_2, \epsilon>0$.
    \item[(ii)] $ \text{ess sup}_{\omega\in I}\sum_{k\in \Z}(\frac{\omega+2kb\pi-p}{b})^2|\F_A\phi (\omega+2kb\pi)|^2<\infty$.
\end{itemize}
\begin{theorem}
Let $\phi\in \mathcal{A}$ be such that $\{T_k^A\phi:k\in\Z\}$ is a Riesz basis for $V_A(\phi)$. Then we have the Bernstein type inequality
\[
\|Bf\|^2\leq M\|f\|^2, ~~~\text{ for all }f\in V_A(\phi),
\]
where $Bf(x)=f^\prime(x)+\frac{iax}{b}f(x)$ and
 \[M=\text{ess sup}_{\omega\in I}\frac{\sum_{k\in\Z}(\frac{\omega+2kb\pi-p}{b})^2|\F_A\phi(\omega+2kb\pi)|^2}{\sum_{k\in\Z}|\F_A\phi(\omega+2kb\pi)|^2}.\]
\end{theorem}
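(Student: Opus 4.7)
\par The plan is to diagonalize the operator $B$ under the SAFT, showing that it acts as multiplication by $i(\omega-p)/b$ on the SAFT side, and then to reduce the desired inequality to a standard periodization argument on $I$, in the same spirit as the proof of Theorem \ref{th: frame char}.

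\par The key identity is $\F_A(Bf)(\omega) = \frac{i(\omega-p)}{b}\F_A f(\omega)$. To prove it, I first rewrite the SAFT as
\[
\F_A h(\omega) = \frac{\eta_A(\omega)}{\sqrt{|b|}}\,\widehat{C_{a/b} h}\!\left(\tfrac{\omega - p}{b}\right),
\]
which is obtained by collecting the quadratic and linear terms in the kernel of \eqref{eq:SAFT0}. A direct application of the product rule gives $(C_{a/b}f)'(x) = e^{iax^2/(2b)}\bigl(f'(x) + \tfrac{iax}{b}f(x)\bigr) = C_{a/b}(Bf)(x)$, so that $C_{a/b}(Bf) = (C_{a/b}f)'$. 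Combining this with the classical relation $\widehat{u'}(\xi) = i\xi\,\widehat{u}(\xi)$ yields the claimed multiplier formula.

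\par With this identity in hand, Plancherel's theorem for the SAFT gives $\|Bf\|^2 = \int_\R \bigl(\tfrac{\omega-p}{b}\bigr)^2 |\F_A f(\omega)|^2\,d\omega$. Since $\{T_k^A\phi : k \in \Z\}$ is a Riesz basis of $V_A(\phi)$, I write $f = \sum_{k} c_k T_k^A \phi$ with $(c_k)\in\ell^2$; Proposition \ref{pro:SAFT1}(iv) then gives $\F_A f(\omega) = r(\omega)\F_A\phi(\omega)$ with $r(\omega) = \sum_k c_k \rho_A(k) e^{-ik\omega/b}$ of period $2b\pi$. Splitting $\R = \bigcup_k (I + 2kb\pi)$ and using periodicity of $r$,
\[
\|Bf\|^2 = \int_I |r(\omega)|^2 \sum_{k\in\Z}\bigl(\tfrac{\omega+2kb\pi-p}{b}\bigr)^{2}|\F_A\phi(\omega+2kb\pi)|^2\,d\omega,
\]
and similarly $\|f\|^2 = \int_I |r(\omega)|^2 w_\phi(\omega)\,d\omega$. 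The Riesz basis hypothesis forces $w_\phi(\omega) \ge m > 0$ a.e.\ on $I$, so one may multiply and divide by $w_\phi(\omega)$ inside the first integral, pull out the essential supremum defining $M$, and conclude $\|Bf\|^2 \le M \|f\|^2$.

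\par The main technical obstacle is making the identity $\F_A(Bf)(\omega) = \tfrac{i(\omega-p)}{b}\F_A f(\omega)$ rigorous, which requires $Bf$ to be well-defined pointwise and to belong to $L^2(\R)$. Hypothesis (i) in the definition of $\mathcal{A}$ supplies the decay needed to differentiate $f = \sum_k c_k T_k^A\phi$ term by term almost everywhere, so that $Bf$ has the expected pointwise meaning, while hypothesis (ii) directly guarantees $M < \infty$ and hence that the multiplication operator with symbol $\omega \mapsto \tfrac{i(\omega-p)}{b}$ sends $\F_A f$ back into $L^2(\R)$; together these ensure that the pointwise definition of $Bf$ agrees with its spectral description, after which the periodization estimate above gives the inequality.
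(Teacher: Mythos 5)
Your argument is correct and follows essentially the same route as the paper: establish $\F_A(Bf)(\omega)=\tfrac{i(\omega-p)}{b}\F_Af(\omega)$, apply Plancherel for the SAFT, periodize over $I$, and bound by $M$ using the positivity of $w_\phi$ from the Riesz basis hypothesis. The only difference is that you derive the multiplier identity directly via $C_{a/b}(Bf)=(C_{a/b}f)'$, whereas the paper simply cites it (Proposition 3.5 of \cite{H3}); your derivation is valid and matches the cited result.
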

\begin{proof}
Let $f(x)=\sum_{k\in \Z}c_kT_k^A\phi(x)$.\\
Then $f^\prime(x)=\sum_{k\in\Z}c_kT_k^A\phi^\prime(x)-\sum_{k\in \Z}\frac{iak}{b}c_kT_k^A\phi(x)$. Since $\phi\in\mathcal{A}$, the above equalities hold pointwise. Thus
\begin{align*}
Bf(x)&=f^\prime(x)+\frac{iax}{b}f(x)\\
&=\sum_{k\in \Z}c_kT_k^A\phi^\prime(x)-\sum_{k\in \Z}\frac{iak}{b}c_kT_k^A\phi(x)+\frac{iax}{b}\sum_{k\in \Z}c_kT_k^A\phi(x)\\
&=\sum_{k\in\Z}c_k\big(\frac{d}{dx}(T_k^A\phi(x))+\frac{iax}{b}T_k^A\phi(x)\big)\\
&=\sum_{k\in \Z}c_kBT_k^A\phi(x).
\end{align*}
Further, $\F_Af(\omega)=r(\omega)\F_A\phi(\omega)$, where $r(\omega)=\sum_{k\in\Z}c_k\rho_A(k)e^{-\frac{i}{b}k\omega}$. \textcolor{black}{Now, using $\F_A(Bf)(\omega)=i\frac{\omega-p}{b}\F_Af(\omega)$ (see Proposition 3.5 in \cite{H3}), we obtain}
\begin{align*}
\|Bf\|^2=\|\F_A(Bf)\|^2&=\|\F_A(\sum_{k\in \Z}c_kB(T_k^A\phi))\|^2\\
&=\int_\R|\sum_{k\in \Z}c_k\F_A(BT_k^A\phi)(\omega)|^2d\omega\\
&=\int_\R|\sum_{k\in\Z}c_k\frac{i(\omega-p)}{b}\F_A(T_k^A\phi)(\omega)|^2d\omega\\
&=\int_\R|\sum_{k\in\Z}c_k\rho_A(k)e^{-\frac{i}{b}k\omega}\frac{\omega-p}{b}\F_A\phi(\omega)|^2d\omega\\
&=\int_\R|r(\omega)|^2(\frac{\omega-p}{b})^2|\F_A\phi(\omega)|^2d\omega\\
&=\int_I |r(\omega)|^2\sum_{k\in\Z}(\frac{\omega+2kb\pi-p}{b})^2|\F_A \phi(\omega+2kb\pi)|^2d\omega\\
&\leq M\int_I|r(\omega)|^2\sum_{k\in\Z}|\F_A\phi(\omega+2kb\pi)|^2d\omega\\
&=M\int_\R|r(\omega)|^2|\F_A\phi(\omega)|^2d\omega=M\|f\|^2,
\end{align*}
\textcolor{black}{proving our assertion.}
%using \eqref{eq: Bf saft}.
\end{proof}}

\section{Sampling theorems}

\textcolor{black}{In this section, our aim is to obtain reconstruction formulae for the functions belonging to certain $V_A(\phi)$ from integer samples. We prove sampling formulae with $L^2$ convergence as well as uniform convergence. As a corollary, we obtain the result proved in \cite{Bhandari}, namely Shannon sampling theorem for the functions which are bandlimited in the SAFT domain. }
%Here we would like to mention that Shannon sampling theorem for functions bandlimited in the SAFT domain has been proved in \cite{Bhandari}.}

%\textcolor{red}{In \cite{Bhandari}, Shannon sampling theorem has been proved for the functions bandlimited in the SAFT domain. Here our aim is to obtain a more general sampling theorem for functions $\phi$ belonging to $W\big(C,\ell^1(\Z)\big)$ such that $\{T_k^A\phi:k\in \Z\}$ forms a Riesz basis for $V_A(\phi)$ and obtain an explicit reconstruction formula. We prove this under the assumption that $1/\phi_A^\dagger\in L^2(I)$. Thus the result is not just for bandlimited functions in the SAFT domain but includes the large class of functions $V_A(\phi)$ with various generators $\phi$.
%%As mentioned in the Introduction, we obtain a sufficient condition for $\Z$ to be a stable set of sampling for $V_A(\phi)$. The natural question is to see whether we can write an explicit reconstruction formula. We answer the question affirmatively in the next theorem when $1/\phi_A^\dagger\in L^2(I)$. This result is not just for $V_A(\phi)$ with generator $sinc$. These are applicable for a large class of generators. 
%In particular, we have illustrated our result with $sinc$ (Shannon sampling theorem) and second order $B$-spline.}

\begin{theorem} \label{exact_R}
%Assume that $V_A(\phi)$ is a reproducing kernel Hilbert space, for $\phi \in L^2(\R)$, 
Let $\phi\in W\big(C,\ell^1(\Z)\big)$ be such that $\{T_k^A\phi:k\in \Z\}$ forms a Riesz basis for $V_A(\phi)$. Then there is a function $S\in V_A(\phi)$ such that
\begin{equation}\label{reconstructionF}
f(t)=\sum_{n\in \Z} f(n)T_n^AS(t),
\end{equation}
for all $f\in V_A(\phi)$ if and only if $1/\phi_A^\dagger \in L^2(I).$
\begin{proof}
Assume that there is a $S\in V_A(\phi)$ such that (\ref{reconstructionF}) holds. Then $\phi(t) = \sum_{n\in \Z} \phi(n)T_n^AS(t)$. Taking SAFT on both sides we obtain 
\begin{equation} \label{eq1}
\F_A\phi(\omega)=\sum_{n\in \Z} \phi(n) \F_A (T_n^AS)(\omega)=\phi_A^\dagger (\omega)\F_AS(\omega).
\end{equation}
This implies that $supp(\F_A\phi)\subseteq supp(\phi_A^\dagger)$. Since $\phi_A^\dagger$ is $2\pi b$ periodic, we have $supp(T_{-2kb\pi}\F_A\phi)\subseteq supp(\phi_A^\dagger),~\forall~ k\in \Z.$\\
\par Since $\phi \neq 0$, we have $\F_A\phi \neq 0$ and hence $supp(\F_A\phi)$ is not a set of measure zero. We shall show that $\bigcup_{k\in \Z} supp(T_{-2kb\pi}\F_A\phi)=\R.$ If not, then there exists a $\Delta (\subseteq \R)$, a set of positive measure such that $\Delta \subset \R \setminus \bigcup_{k\in \Z} supp(T_{-2kb\pi}\F_A\phi)$ and $\F_A\phi (\omega +2kb\pi) =0, ~ \omega \in \Delta,~ \forall~ k.$ This in turn implies that $\sum_{k\in \Z}|\F_A\phi (\omega +2kb\pi)|^2=0$ on $\Delta $. This is a contradiction to our assumption that $\{T_k^A\phi :k\in\Z\}$ is a Riesz sequence. So $\phi_A^\dagger(\omega) \neq 0$ for almost every $\omega \in \R.$ Using (\ref{eq1}), we get $\F_AS(\omega)=\F_A\phi (\omega)/\phi_A^\dagger (\omega) $ and
\begin{align*}
\int_\R |S(t)|^2dt =\int_\R |\F_AS(\omega)|^2d\omega &= \int_\R |\frac{\F_A\phi(\omega)}{\phi_A^\dagger (\omega)}|^2d\omega\\
&= \int_I\frac{\sum_{k\in \Z} |\F_A\phi (\omega+2kb\pi)|^2}{|\phi_A^\dagger (\omega)|^2}d\omega \\
&\geq \|G_\phi^A\|_0\int_I\frac{1}{|\phi_A^\dagger (\omega)|^2}d\omega,
\end{align*}
where $\|G_\phi ^A\|_0 =\inf_{\omega \in I}\sum_{k\in \Z} |\F_A\phi (\omega+2kb\pi)|^2$. Since $\{T_k^A\phi : k\in\Z\}$ forms a Riesz sequence, $\|G_\phi^A\|_0>0$. Consequently $1/\phi_A^\dagger \in L^2(I).$
\par Conversely, assume that $1/\phi_A^\dagger \in L^2(I)$. Since $\{\frac{1}{\sqrt{2\pi |b|}}\rho_A(n)e^{-\frac{i}{b}n\omega} : n\in \Z\}$ forms an orthonormal basis for $L^2(I)$, there is a sequence $\{c_n\}\in \ell^2(\Z)$ such that
$$
\frac{1}{\phi_A^\dagger (\omega)}=\sum_{n\in \Z} c_n\rho_A(n)e^{-\frac{i}{b}n\omega}.
$$
Let $F(\omega)=\F_A\phi(\omega)/\phi_A^\dagger (\omega).$ Then
\begin{align*}
\int_\R |F(\omega)|^2d\omega &= \int_\R |\frac{\F_A\phi (\omega)}{\phi_A^\dagger (\omega)}|^2d\omega\\
&= \int_I\frac{\sum_{k\in \Z}|\F_A\phi(\omega+2kb\pi)|^2}{|\phi_A^\dagger|^2}d\omega\\
&\leq \|G_\phi ^A\|_\infty \|1/\phi_A^\dagger\|_{L^2(I)}^2,
\end{align*}
where $\|G_\phi^A\|_\infty = \sup_{\omega \in I}\sum_{k\in \Z} |\F_A\phi (\omega+2kb\pi)|^2$. Since $\{T_k^A\phi: k\in \Z\}$ is a Riesz sequence, $\|G_\phi^A\|_\infty <\infty$. Hence $F\in L^2(\R)$. Then there is exactly one $S\in L^2(\R)$ such that $\F_AS(\omega)=F(\omega)$. From the definition of $\phi_A^\dagger$ we get 
$$
\F_AS(\omega)=\F_A\phi(\omega)\sum_{k\in \Z}c_k\rho_A(k)e^{-\frac{i}{b}k\omega}=\sum_{k\in \Z}c_k\F_A(T_k^A\phi)(\omega),
$$
which shows that $S=\sum_{k\in \Z}c_kT_k^A\phi$ and hence $S\in V_A(\phi)$.
Now let $f\in V_A(\phi)$ with representation $f=\sum_{k\in \Z}a_kT_k^A\phi$. Taking SAFT leads to
\begin{align*}
\F_Af(\omega)&= \sum_{k\in \Z}a_k\F_A(T_k^A\phi)(\omega)\\
&= \sum_{k\in \Z}a_k\rho_A(k)e^{-\frac{i}{b}k\omega}\F_A\phi (\omega)\\
&= \F_AS(\omega)\phi_A^\dagger (\omega)\sum_{k\in \Z} a_k\rho_A(k)e^{-\frac{i}{b}k\omega}\\
&=\F_AS(\omega)\sum_{\ell \in \Z} \phi(\ell)\rho_A(\ell)e^{-\frac{i}{b}\ell 
\omega} \sum_{k\in \Z} a_k \rho_A(k)e^{-\frac{i}{b}k\omega}\\
&= \F_AS(\omega)\sum_{\ell \in \Z} \sum_{k\in \Z} a_k\phi(\ell) \rho_A(k)\rho_A (\ell)e^{-\frac{i}{b}(\ell+k)\omega}\\
&=\F_AS(\omega)\sum_{n\in \Z} \sum_{k\in \Z} a_k \phi (n-k)\rho_A(k)\rho_A(n-k)e^{-\frac{i}{b}n\omega}\\
&=\F_AS(\omega)\sum_{n\in \Z} \sum_{k\in \Z} a_k \phi(n-k)e^{-\frac{i}{b}k(n-k)}\rho_A(n)e^{-\frac{i}{b}n\omega}\\
&=\F_AS(\omega)\sum_{n\in \Z} \big(\sum_{k\in \Z} a_k T_k^A\phi(n)\big)\rho_A(n)e^{-\frac{i}{b}n\omega}\\
&= \sum_{n\in \Z}f(n)\F_A(T_n^AS)(\omega).
\end{align*}
The last equality finally gives $f=\sum_{n\in \Z} f(n)T_n^AS.$
\end{proof}
\end{theorem}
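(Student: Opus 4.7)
I would treat the two directions separately and transfer everything to the SAFT side via the identity $\F_A(T_n^A\phi)(\omega)=\rho_A(n)e^{-in\omega/b}\F_A\phi(\omega)$ from Proposition \ref{pro:SAFT1}(iv), together with the fact that $\{(2\pi|b|)^{-1/2}\rho_A(n)e^{-in\omega/b}:n\in\Z\}$ is an orthonormal basis for $L^2(I)$. Under this correspondence, $\phi_A^\dagger$ acts as the multiplier symbol relating $\F_A\phi$ and $\F_AS$.

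For the forward direction, I would apply the putative reconstruction identity to $f=\phi$ and take the SAFT, collapsing the right-hand side to $\phi_A^\dagger(\omega)\F_AS(\omega)$, so that $\F_A\phi=\phi_A^\dagger\F_AS$. First I would rule out that $\phi_A^\dagger$ vanishes on a set $\Delta\subset I$ of positive measure: its $2b\pi$-periodicity would propagate the zeros and force $\F_A\phi$ to vanish on every translate $\Delta+2kb\pi$, whence $\sum_k|\F_A\phi(\omega+2kb\pi)|^2=0$ on $\Delta$, contradicting the Riesz basis lower bound of Theorem \ref{th: frame char}. Then $\F_AS=\F_A\phi/\phi_A^\dagger$ almost everywhere, and periodizing $\int_\R|\F_AS|^2$ over $I$, combined with the uniform lower bound $\|G_\phi^A\|_0>0$, forces $1/\phi_A^\dagger\in L^2(I)$.

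For the converse, starting from $1/\phi_A^\dagger\in L^2(I)$, I would expand $1/\phi_A^\dagger(\omega)=\sum_n c_n\rho_A(n)e^{-in\omega/b}$ with $\{c_n\}\in\ell^2$ using the orthonormal basis above, and define $S$ by $\F_AS:=\F_A\phi/\phi_A^\dagger$. The same periodization, now using the Riesz upper bound $\|G_\phi^A\|_\infty<\infty$, yields $S\in L^2(\R)$, and writing $\F_AS(\omega)=\sum_k c_k\F_A(T_k^A\phi)(\omega)$ identifies $S=\sum_k c_k T_k^A\phi\in V_A(\phi)$. Finally, for $f=\sum_k a_k T_k^A\phi\in V_A(\phi)$, I would take the SAFT and form the Cauchy product $\F_A f(\omega)=\F_AS(\omega)\,\phi_A^\dagger(\omega)\sum_k a_k\rho_A(k)e^{-ik\omega/b}$. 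Setting $n=k+\ell$ and using the cocycle identity $\rho_A(k)\rho_A(n-k)=\rho_A(n)e^{-\frac{ia}{b}k(n-k)}$, the coefficient of $\rho_A(n)e^{-in\omega/b}$ simplifies to $\sum_k a_k e^{-\frac{ia}{b}k(n-k)}\phi(n-k)=\sum_k a_k T_k^A\phi(n)=f(n)$, giving $\F_A f=\sum_n f(n)\F_A(T_n^AS)$ and hence the desired reconstruction formula by injectivity of $\F_A$.

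The main obstacle is the last step: the pointwise meaning of $f(n)$ and the legitimacy of the double-sum rearrangement. Here the Wiener amalgam assumption $\phi\in W(C,\ell^1(\Z))$ is essential, since it ensures that $V_A(\phi)$ embeds continuously into a space of continuous functions with controlled sample values, which justifies both the evaluation at integers and the interchange of summation in the Cauchy product. I would interpret the identity first in $L^2(I)$ using the orthonormal basis and then transfer it to a genuine pointwise identity via this amalgam-space continuity.
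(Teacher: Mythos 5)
Your proposal follows essentially the same route as the paper's proof: apply the reconstruction formula to $f=\phi$, use the $2b\pi$-periodicity of $\phi_A^\dagger$ together with the Riesz lower and upper bounds to get $1/\phi_A^\dagger\in L^2(I)$ and $\F_AS=\F_A\phi/\phi_A^\dagger\in L^2(\R)$, and then carry out the Cauchy-product computation on the SAFT side with the cocycle identity $\rho_A(k)\rho_A(n-k)=\rho_A(n)e^{-\frac{ia}{b}k(n-k)}$. Your explicit attention to justifying the pointwise evaluation $f(n)$ and the summation interchange via the amalgam hypothesis is a detail the paper leaves implicit, but the argument is the same.
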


\textcolor{black}{In Theorem \ref{exact_R}, we obtained a sampling formula for functions belonging to $A$-shift invariant spaces with $L^2$ convergence. Now, our aim is to obtain another version of a sampling theorem where we obtain both $L^2$ convergence and pointwise convergence of the corresponding reconstruction formula. Towards this end, we prove the following}

\textcolor{black}{\begin{lemma}
Let $\phi\in L^2(\R)$. Then the following statements are equivalent.
\begin{itemize}
\item[(i)] For any $\{c_k\}\in\ell^2(\Z)$, the series of functions $\sum_{k\in\Z}c_kT_k^A\phi(t)$ converges to a continuous function.
\item[(ii)] $\phi\in C(\R)$ and $\sup_{t\in\R}\sum_{k\in\Z}|\phi(t-k)|^2<\infty$.
\end{itemize}
\end{lemma}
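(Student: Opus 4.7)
The plan is to prove the two implications separately, with the forward direction being straightforward and the reverse direction requiring a double application of the uniform boundedness principle combined with a periodicity trick.

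For (ii) $\Rightarrow$ (i), the key observation is that $|T_k^A\phi(t)|=|\phi(t-k)|$ because the chirp factor $e^{-i(a/b)k(t-k)}$ has modulus one. Setting $M^{2}=\sup_{t\in\R}\sum_{k\in\Z}|\phi(t-k)|^{2}$, Cauchy--Schwarz yields the tail estimate
\[
\sum_{|k|>N}|c_{k}T_{k}^{A}\phi(t)|\leq M\Bigl(\sum_{|k|>N}|c_{k}|^{2}\Bigr)^{1/2},
\]
which is uniform in $t\in\R$. Hence the partial sums converge uniformly on $\R$; since each $c_k T_k^A\phi$ is continuous (as $\phi\in C(\R)$ and the chirp factor is continuous in $t$), the limit is continuous.

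For (i) $\Rightarrow$ (ii), I would first extract $\phi\in C(\R)$ by choosing $c_{k}=\delta_{k,0}$, so that the sum collapses to $\phi$ itself, which is continuous by hypothesis. The main step is to produce the uniform bound on $\sum_{k}|\phi(t-k)|^{2}$. For each fixed $t$, consider the truncated functionals $\Phi_{t}^{N}:\ell^{2}(\Z)\to\C$ defined by $\Phi_{t}^{N}(c)=\sum_{|k|\leq N}c_{k}T_{k}^{A}\phi(t)$. By Riesz representation, $\|\Phi_{t}^{N}\|=\bigl(\sum_{|k|\leq N}|\phi(t-k)|^{2}\bigr)^{1/2}$. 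Since (i) guarantees that $\Phi_{t}^{N}(c)$ converges for every $c\in\ell^{2}(\Z)$, the uniform boundedness principle gives $\sum_{k\in\Z}|\phi(t-k)|^{2}<\infty$ and $\|\Phi_{t}\|=\bigl(\sum_{k}|\phi(t-k)|^{2}\bigr)^{1/2}$ where $\Phi_{t}(c)=F_{c}(t):=\sum_{k}c_{k}T_{k}^{A}\phi(t)$.

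Next, I would apply the uniform boundedness principle a second time, this time to the family $\{\Phi_{t}:t\in[0,1]\}$. For each $c\in\ell^{2}(\Z)$, the function $t\mapsto\Phi_{t}(c)=F_{c}(t)$ is continuous by (i), hence bounded on the compact interval $[0,1]$. Thus the family is pointwise bounded, and uniform boundedness yields $\sup_{t\in[0,1]}\|\Phi_{t}\|<\infty$, i.e.\ $\sup_{t\in[0,1]}\sum_{k\in\Z}|\phi(t-k)|^{2}<\infty$. The argument is closed by the observation that $t\mapsto\sum_{k\in\Z}|\phi(t-k)|^{2}$ is $1$-periodic (via the substitution $k\mapsto k-1$), so its supremum over $\R$ coincides with its supremum over $[0,1]$.

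The main obstacle I anticipate is precisely the last step: mere pointwise convergence of the series for each $t$ does not a priori deliver a uniform-in-$t$ $\ell^{2}$-bound on $\{\phi(t-k)\}_{k}$. The key ingredient that removes this obstacle is the $1$-periodicity of $\sum_{k}|\phi(t-k)|^{2}$, which reduces the problem to the compact interval $[0,1]$, where the continuity of each $F_{c}$ supplies exactly the pointwise boundedness needed to invoke uniform boundedness a second time.
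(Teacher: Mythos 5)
Your proof is correct and follows essentially the same route as the paper: the paper's entire argument is the observation $|T_k^A\phi(t)|=|\phi(t-k)|$ followed by a citation of Lemma~1 in the Zhou--Sun reference, whose proof is precisely your double application of the uniform boundedness principle together with the $1$-periodicity of $t\mapsto\sum_k|\phi(t-k)|^2$. You have simply written out in full the details the paper delegates to the reference.
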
 
\begin{proof}
Since $|T_k^A\phi(t)|=|T_k\phi(t)|=|\phi(t-k)|$, the proof follows as in Lemma 1 in \cite{ZhouSunJFAA}.
\end{proof}
For a sequence $\{c_k\}$, we define 
\[
\DF_A(\{c_k\})(\omega)=\frac{1}{\sqrt{2\pi|b|}}\sum_{k\in\Z}c[k]e^{\frac{i}{2b}(ak^2+2pk-2\omega k)}.
\]
For two sequences $\{c_k\}$ and $\{d_k\}$, we define
\[
(\{c_k\}*_A \{d_k\})[n]=\frac{1}{\sqrt{2\pi|b|}}\sum_{k\in \Z}c[k]T_k^Ad[n].
\]
Using the fact that $\{\frac{1}{\sqrt{2\pi |b|}}e^{\frac{i}{2b}(ak^2+2pk-2\omega k)}:k\in\Z\}$, is an orthonormal basis for $L^2(I)$, we get
\begin{equation*}
%\label{eq:nsampling 1}
\|\DF_A\{c_k\}\|^2=\sum_{k\in\Z}|c[k]|^2.
\end{equation*}
Further, one can show that
\begin{equation*}
%\label{nsampling 2}
\DF_A\{\{c_k\}*_A\{d_k\}\}(\omega)=\DF_A(\{c_k\})(\omega)\DF_A(\{d_k\})(\omega).
\end{equation*}
Thus
\begin{equation}\label{eq:nsampling1}
\int_I|\DF_A(\{c_k\})(\omega)|^2|\DF_A(\{d_k\})(\omega)|^2d\omega = \frac{1}{\sqrt{2\pi|b|}}\sum_{n\in\Z}|\sum_{k\in\Z}c[k]T_k^Ad[n]|^2
\end{equation}}

\textcolor{black}{\begin{theorem}
Let $\{T_k^A\phi:k\in\Z\}$ be a frame sequence for $V_A(\phi)$. Then the following are equivalent.
\begin{itemize}
\item[(i)] The series $\sum_{k\in\Z}c_kT_k^A\phi(t)$ converges to a continuous function for any $\{c_k\}\in\ell^2(\Z)$ and there exists a frame $\{T_k^A\psi:k\in\Z\}$ for $V_A(\phi)$ such that
\begin{equation}\label{eq:nsampling 2}
f(t)=\sum_{k\in\Z}f(k)T_k^A\psi(t),~~\text{ for all }f\in V_A(\phi),
\end{equation}
where the convergence is both in $L^2(\R)$ and uniform on $\R$.
\item[(ii)] $\phi\in C(\R)$, $\sum_{k\in\Z}|\phi(t-k)|^2$ is bounded on $\R$ and
\begin{equation}\label{eq:nsampling 3}
m\chi_{E_\phi}(\omega)\leq |\phi_A^\dagger (\omega)|\leq M\chi_{E_\phi}(\omega),
\end{equation}
for some $m,M>0$, where $E_\phi:=\{\omega\in \R:w_\phi(\omega)\neq 0\}$, $w_\phi(\omega):=\sum_{k\in\Z}|\F_A\phi(\omega+2kb\pi)|^2$.
\end{itemize}
\end{theorem}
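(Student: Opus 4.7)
The plan is to prove the two implications separately, the pivotal object in each being the function $\psi$ defined by $\F_A\psi=\F_A\phi/\phi_A^\dagger$ on the support of $\F_A\phi$ and zero elsewhere.

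For (ii)$\Rightarrow$(i), the continuity of $\sum_k c_kT_k^A\phi$ for every $\{c_k\}\in\ell^2$ is immediate from the preceding lemma applied to the first two conditions of (ii). The lower bound $|\phi_A^\dagger|\geq m$ on $E_\phi$ ensures that $\F_A\psi\in L^2(\R)$ is well defined. Since $\Psi(\omega):=\chi_{E_\phi}(\omega)/\phi_A^\dagger(\omega)$ is a bounded $2\pi|b|$-periodic function, expanding it in the orthonormal basis $\{\rho_A(k)e^{-ik\omega/b}/\sqrt{2\pi|b|}:k\in\Z\}$ of $L^2(I)$ yields $\psi=\sum_k d_kT_k^A\phi\in V_A(\phi)$ with $\{d_k\}\in\ell^2$. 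The identity $w_\psi=w_\phi/|\phi_A^\dagger|^2$, combined with (ii) and Theorem \ref{th: frame char}, shows $\{T_k^A\psi:k\in\Z\}$ is a frame for $V_A(\phi)$.

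The sampling formula is verified in $L^2$ by taking SAFT on both sides: the factorization $\rho_A(n)e^{-iak(n-k)/b}=\rho_A(k)\rho_A(n-k)$ yields the key identity $\sum_n\rho_A(n)f(n)e^{-in\omega/b}=G(\omega)\phi_A^\dagger(\omega)$, where $G(\omega)=\sum_k c_k\rho_A(k)e^{-ik\omega/b}$ represents $f=\sum_k c_kT_k^A\phi$; then $\F_A(\sum_k f(k)T_k^A\psi)=G\phi_A^\dagger\Psi\F_A\phi=G\F_A\phi=\F_Af$, since $\phi_A^\dagger\Psi=1$ wherever $\F_A\phi\neq 0$. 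The hardest step is the uniform convergence, which by a Cauchy--Schwarz estimate on the tail reduces to two facts: (a) the sampling bound $\sum_n|f(n)|^2\leq C\|f\|^2$ on $V_A(\phi)$, following from $\|\phi_A^\dagger\|_\infty\leq M$ and the lower frame bound of $\{T_k^A\phi\}$; and (b) $\sup_t\sum_k|\psi(t-k)|^2<\infty$. For (b) I would rewrite $\sum_k c_kT_k^A\psi$, using Proposition \ref{pro:SAFT1}(i), as $\sum_m e_mT_m^A\phi$ and show $\{e_m\}\in\ell^2$ by checking that its associated Fourier series equals $G\Psi\in L^2(I)$; the preceding lemma applied to $\phi$ then gives continuity, and the converse direction of the same lemma applied to $\psi$ yields the required pointwise bound.

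For (i)$\Rightarrow$(ii), the continuity of $\phi$ and the boundedness of $\sum_k|\phi(t-k)|^2$ are immediate from the preceding lemma. Specializing the reconstruction formula to $f=\phi$ and taking SAFT gives $\F_A\phi=\phi_A^\dagger\F_A\psi$; since $\psi\in V_A(\phi)$, one has $\F_A\psi=\Psi\F_A\phi$ for some $2\pi|b|$-periodic $\Psi$, which forces $\phi_A^\dagger\Psi=1$ wherever $\F_A\phi\neq 0$. Translating the frame property of $\{T_k^A\psi\}$ via $w_\psi=w_\phi/|\phi_A^\dagger|^2$ and Theorem \ref{th: frame char}, and combining with the hypothesized frame property $w_\phi\asymp\chi_{E_\phi}$ of $\{T_k^A\phi\}$, then forces $|\phi_A^\dagger|^2\asymp\chi_{E_\phi}$, which is the remaining content of (ii).
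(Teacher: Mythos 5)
Your overall strategy matches the paper's: both directions pivot on the function $\psi$ with $\F_A\psi=\F_A\phi/\phi_A^\dagger$ on $E_\phi$, and both lean on the preceding lemma and on Theorem \ref{th: frame char}. The (ii)$\Rightarrow$(i) half is essentially sound (the paper instead constructs the dual window $\tilde\psi$ with $\F_A\tilde\psi=\overline{\phi_A^\dagger}\,\F_A\phi/w_\phi$ and shows the frame coefficients are the samples, while you verify the identity directly in the SAFT domain; these are equivalent). But there is a genuine gap in your (i)$\Rightarrow$(ii) argument.

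The gap is the vanishing of $\phi_A^\dagger$ on $I\setminus E_\phi$, i.e.\ the upper bound $|\phi_A^\dagger|\leq M\chi_{E_\phi}$ off $E_\phi$. Your chain of deductions is: $\F_A\phi=\phi_A^\dagger\F_A\psi$, hence $w_\phi=|\phi_A^\dagger|^2w_\psi$, and the two frame properties give $w_\phi\asymp\chi_{E_\phi}$ and $w_\psi\asymp\chi_{E_\psi}$. Since $\psi\in V_A(\phi)$ forces $E_\psi=E_\phi$, this yields $|\phi_A^\dagger|\asymp 1$ \emph{on} $E_\phi$ --- but on $I\setminus E_\phi$ the identity $w_\phi=|\phi_A^\dagger|^2w_\psi$ reads $0=|\phi_A^\dagger|^2\cdot 0$ and says nothing about $\phi_A^\dagger$. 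Note that $\phi_A^\dagger(\omega)=\sum_n\phi(n)\rho_A(n)e^{-in\omega/b}$ is built from the integer samples of $\phi$ and is not a priori tied pointwise to $\F_A\phi$, so its vanishing off $E_\phi$ is a real assertion needing proof. The paper closes this by using the continuity hypothesis in (i): take $c(\omega)=1-\chi_{E_\phi}(\omega)=\sum_kc_k\rho_A(k)e^{-ik\omega/b}$ with $\{c_k\}\in\ell^2$; since $c\,\F_A\phi=0$, the series $\sum_kc_kT_k^A\phi$ is the zero (continuous) function, so its integer samples vanish, and the discrete Parseval identity \eqref{eq:nsampling1} then gives $\int_{I\setminus E_\phi}|\phi_A^\dagger|^2\,d\omega=0$. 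You need some such argument; without it condition \eqref{eq:nsampling 3} is not established.

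A secondary, smaller point: in (ii)$\Rightarrow$(i) you obtain $\sup_t\sum_k|\psi(t-k)|^2<\infty$ by applying the (i)$\Rightarrow$(ii) direction of the lemma to $\psi$. That requires knowing that $\sum_kc_kT_k^A\psi(t)$ converges \emph{pointwise} to a continuous function for every $\{c_k\}\in\ell^2$, whereas your rewriting $\sum_kc_kT_k^A\psi=\sum_me_mT_m^A\phi$ is an identity in $L^2$; you should either upgrade it to a pointwise statement or, more simply, bound $\sum_n|\psi(t-n)|^2=\frac{1}{2\pi|b|}\int_I|\tilde r(\omega)|^2\bigl|\sum_n\phi(t-n)\rho_A(n)e^{-in\omega/b}\bigr|^2d\omega\leq\|\tilde r\|_\infty^2\sum_n|\phi(t-n)|^2$ directly, as the paper does, where $\tilde r=\chi_{E_\phi}/\phi_A^\dagger$ is bounded by $1/m$.
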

\begin{proof}
In order to show that (i) implies (ii), it is enough to show that \eqref{eq:nsampling 3} holds. Taking $f=\phi$ in \eqref{eq:nsampling 2}, we get
\[
\phi(t)=\sum_{k\in\Z}\phi(k)T_k^A\psi(t).
\]
Taking SAFT on both sides we obtain $\F_A\phi(\omega)=\phi_A^\dagger(\omega)\F_A\psi(\omega)$. This implies that $w_\phi(\omega)=|\phi^\dagger_A(\omega)|^2w_\psi(\omega)$, from which it follows that $E_\phi\subset E_\psi$. Since $\{T_k^A\phi:k\in\Z\}$ and $\{T_k^A\psi:k\in\Z\}$ are frame sequences, there exist $m,M>0$ such that
\[
m\leq |\phi_A^\dagger(\omega)|\leq M,~~\text{ a.e. }\omega\in E_\phi,
\]
using Theorem \ref{th: frame char}. We now show that $\phi_A^\dagger(\omega)=0$ a.e. on $I\setminus E_\phi$. To see this, take $c(\omega)=1-\chi_{E_\phi}(\omega).$ Then $c(\omega)=\sum_{k\in\Z}c_k\rho_A(k)e^{-\frac{i}{b}k\omega}$, for some $\{c_k\}\in\ell^2(\Z)$. Since
$c(\omega)\F_A\phi(\omega)=0$, taking inverse SAFT, we obtain $\sum_{k\in\Z}c_kT_k^A\phi(t)=0$, for all $t\in\R$. In particular, 
\[
\{\{c_k\}*_A\{\phi(k)\}\}[n]=\frac{1}{\sqrt{2\pi|b|}}\sum_{k\in\Z}c_kT_k^A\phi(n)=0.
\]
Thus using \eqref{eq:nsampling1}, we get
\begin{align*}
0=\sum_{n\in\Z}|\sum_{k\in\Z}c_kT_k^A\phi(n)|^2&=\int_I|\DF_A\{c_k\}(\omega)|^2|\DF_A\{\phi(n)\}(\omega)|^2d\omega\\
&=\int_{I\setminus E_\phi}|\phi_A^\dagger(\omega)|^2d\omega,
\end{align*}
which proves our claim.\\
Conversely assume that (ii) holds. Let
\[
\F_A\psi(\omega)=
\begin{cases}
\frac{1}{\phi_A^\dagger(\omega)}\F_A\phi(\omega)& \omega\in E_\phi\\
0, &\omega\notin E_\phi.
\end{cases}
\]
Then $\{T_k^A\psi:k\in\Z\}$ is a frame sequence by appealing to Theorem \ref{th: frame char}. Since $\frac{1}{\phi_A^\dagger}\in L^2(I)$, it can be easily seen that $\psi\in V_A(\phi)$. With the similar reasoning, we can say that $\phi\in V_A(\psi)$. Thus $V_A(\phi)=V_A(\psi)$. Now define,
\[
\F_A\tilde{\psi}(\omega)=
\begin{cases}
\frac{\overline{\phi_A^\dagger(\omega)}}{w_\phi(\omega)}\F_A\phi(\omega),&\omega\in E_\phi\\
0,&\omega\notin E_\phi.
\end{cases}
\]
We show that $\{T_k^A\tilde{\psi}:k\in\Z\}$ is the canonical dual of $\{T_k^A\psi:k\in\Z\}$. Let $S$ be the frame operator associated with the frame $\{T_k^A\psi:k\in\Z\}$. Since $S$ commutes with $T_k^A$, for all $k$, it is enough to show that $S\tilde{\psi}=\psi$. Consider
\begin{align*}
\F_A(S\tilde{\psi})(\omega)&=\sum_{k\in\Z}\langle \tilde{\psi},T_k^A\psi\rangle \F_A(T_k^A\psi)(\omega)\\
&=\sum_{k\in\Z}\rho_A(k)e^{-\frac{i}{b}k\omega}\F_A\psi(\omega)\langle \F_A\tilde{\psi},\F_A(T_k^A\psi)\rangle\\
&=\sum_{k\in\Z}\rho_A(k)e^{-\frac{i}{b}k\omega}\F_A\psi(\omega)\int_{\R} \F_A\tilde{\psi}(\eta)\overline{\F_A(T_k^A\psi)(\eta)}d\eta\\
&=\sum_{k\in\Z}\rho_A(k)e^{-\frac{i}{b}k\omega}\F_A\psi(\omega)\int_\R \F_A\tilde{\psi}(\eta)\overline{\rho_A(k)}e^{\frac{i}{b}k\eta}\overline{\F_A\psi(\eta)}d\eta\\
&=\frac{1}{2\pi |b|}\sum_{k\in\Z}e^{-\frac{i}{b}k\omega}\F_A\psi(\omega)\int_{E_\phi}e^{\frac{i}{b}k\eta}\frac{|\F_A\phi(\eta)|^2}{w_\phi(\eta)}d\eta\\
&=\F_A\psi(\omega)\frac{1}{2\pi|b|}\sum_{k\in\Z}e^{-\frac{i}{b}k\omega}\int_I \chi_{E_\phi}(\eta)e^{\frac{i}{b}k\eta}d\eta\\
&=\F_A\psi(\omega)\chi_{E_\phi}(\omega)=\F_A\psi(\omega),
\end{align*}
which proves our claim. Let $f\in V_A(\phi)$. Then $\F_Af(\omega)=r(\omega)\F_A\phi(\omega)$, where $r(\omega)=\sum_{k\in\Z}c_k\rho_A(k)e^{-\frac{i}{b}k\omega}$, for some $\{c_k\}\in\ell^2(\Z)$. For $k\in\Z$, consider
\begin{align*}
\langle f,T_k^A\tilde{\psi}\rangle&=\int_\R \F_Af(\omega)\overline{\F_A(T_k^A\tilde{\psi})(\omega)}d\omega\\
&=\frac{1}{2\pi|b|}\int_{E_\phi}r(\omega)|\F_A\phi(\omega)|^2\frac{\phi_A^\dagger(\omega)}{w_\phi(\omega)}\overline{\rho_A(k)}e^{\frac{i}{b}k\omega}d\omega\\
&=\frac{1}{2\pi|b|}\int_Ir(\omega)\phi_A^\dagger(\omega)\overline{\rho_A(k)}e^{\frac{i}{b}k\omega}d\omega\\
&=\int_I \DF_A\{c_k\}(\omega)\DF_A\{\phi(n)\}(\omega)\overline{\rho_A(k)}e^{\frac{i}{b}k\omega}d\omega\\
&=\int_I\DF_A\{\{c_k\}*_A\{\phi(n)\}\}(\omega)\overline{\rho_A(k)}e^{\frac{i}{b}k\omega}d\omega\\
&=\sqrt{2\pi|b|}\{\{c_k\}*_A\{\phi(n)\}\}=f(k).
\end{align*}
Hence
\[
f(t)=\sum_{k\in\Z}\langle f,T_k^A\tilde{\psi}\rangle T_k^A\psi(t)=\sum_{k\in\Z}f(k)T_k^A(\psi)(t).
\]
We notice that $\{f(k)\}\in\ell^2(\Z)$. Thus, in order to show the uniform convergence of the above series, it is enough to show that
\[
\sum_{k\in\Z}|T_k^A\psi(t)|^2=\sum_{k\in\Z}|\psi(t-k)|^2\leq M<\infty,~~\text{ for all }t\in\R,
\]
for some $M>0$. Since $\psi\in V_A(\phi)$, $\F_A\psi(\omega)=r(\omega)\F_A\phi(\omega)$, where $r(\omega)=\sum_{k\in\Z}c_k\rho_A(k)e^{-\frac{i}{b}k\omega}$, for some $\{c_k\}\in\ell^2(\Z)$. Thus $w_\psi(\omega)=|r(\omega)|^2w_\phi(\omega)$. Since $\{T_k^A\phi:k\in\Z\}$ and $\{T_k^A\psi:k\in\Z\}$ are frames for $V_A(\phi)$, $r$ is bounded on $E_\phi$. This implies that $\tilde{r}(\omega)=r(\omega)\chi_{E_\phi}(\omega)$ is bounded on $I$. Let $\tilde{r}(\omega)=\sum_{k\in\Z}\tilde{c_k}\rho_A(k)e^{-\frac{i}{b}k\omega}$, for some $\{c_k\}\in\ell^2(\Z)$. Since $r(\omega)\F_A\phi(\omega)=\tilde{r}(\omega)\F_A\phi(\omega)$, $\psi(t)=\sum_{k\in\Z}\tilde{c_k}T_k^A\phi(t)$. Thus
\begin{align*}
\sum_{n\in\Z}|\psi(t-n)|^2&=\sum_{n\in\Z}|\sum_{k\in\Z}\tilde{c_k}T_k^A\phi(t-n)|^2\\
&=\frac{1}{2\pi|b|}\int_I|\tilde{r}(\omega)|^2|\sum_{n\in\Z}\phi(t-n)\rho_A(n)e^{-\frac{i}{b}n\omega}|^2d\omega\\
&\leq \|\tilde{r}(\omega)\|_\infty \sum_{n\in\Z}|\phi(t-n)|^2,
\end{align*}
proving our assertion.
\end{proof}}

\par As a consequence of Theorem \ref{exact_R}, we obtain Shannon sampling theorem for the SAFT domain by taking $\phi=sinc,$ where 
%\begin{equation*}
$sinc(x)=\begin{cases}
          \frac{sin\pi x}{\pi x}, \quad &\text{if} \, x \neq 0 \\
          1, \quad &\text{if} \, x =0 \\
     \end{cases}.$
%\end{equation*}
 We also write down the sampling theorem when $\phi$ is taken to be the second order symmetric $B$-spline. 
\begin{corollary}\label{SST}
Let $\phi=sinc $ and $\psi=C_{-\frac{a}{b}}\phi$. Then for every $f\in V_A(\psi)$, we have the following representation
\begin{equation} \label{SSTEQ}
f(t)=\sum_{k\in \Z}f(k)e^{i\frac{a}{2b}(k^2-t^2)}sinc(t-k),\quad t\in \R.
\end{equation}
\begin{proof}
We have 
\begin{align*}
\F_A\psi (\omega)&=\frac{\eta_A(\omega)}{\sqrt{2\pi|b|}}\int_\R C_{-\frac{a}{b}}\phi(t)e^{\frac{i}{2b}(at^2+2pt-2\omega t)}dt\\
&=\frac{\eta_A(\omega)}{\sqrt{|b|}}\widehat{\phi} (\frac{\omega - p}{b})\\
&=\frac{\eta_A(\omega)}{\sqrt{2\pi|b|}}\chi_{[-\pi,\pi]}(\frac{\omega-p}{b})\\
&= \frac{\eta_A(\omega)}{\sqrt{2\pi|b|}}\chi_{I+p}(\omega),
\end{align*}
from which it follows that $\sum_{k\in \Z} |\F_A\psi (\omega +2kb\pi)|^2=\frac{1}{2\pi |b|}$ for almost all $\omega \in I$, and this implies that $\{T_k^A\psi : k\in \Z\}$ is an orthonormal basis for the space $V_A(\psi)$. Moreover, we have $\psi_A^\dagger (\omega)=\sum_{k\in \Z}C_{-\frac{a}{b}}sinc(k)\rho_A(k)e^{-\frac{i}{b}k\omega}=1$. Consequently $1/\psi_A^\dagger = 1.$ So by Theorem \ref{exact_R} we have $\F_AS(\omega)=\F_A\psi(\omega),$ which implies that $S=\psi.$ Hence for $f\in V_A(\psi)$ we have 
\begin{align*}
f(t)&=\sum_{k\in \Z} f(k)T_k^A\psi(t) =\sum_{k\in \Z} f(k) e^{-i\frac{a}{b}k(t-k)}e^{i\frac{a}{2b}(t-k)^2}sinc(t-k)\\
&= \sum_{k\in \Z} f(k)e^{-i\frac{a}{2b}(t^2-k^2)}sinc(t-k).
\end{align*}
\end{proof}
\end{corollary}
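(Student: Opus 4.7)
The plan is to apply Theorem \ref{exact_R} with generator $\psi=C_{-a/b}\mathrm{sinc}$ and then unwind the definition of the $A$-translation to recover the closed-form formula in \eqref{SSTEQ}.

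First I would compute $\F_A\psi$ explicitly. The chirp factor $C_{-a/b}$ is tailor-made to cancel the $e^{i(a/2b)t^2}$ piece of the SAFT kernel, so the integral defining $\F_A\psi(\omega)$ collapses to a linear-phase Fourier transform of $\mathrm{sinc}$. Using $\widehat{\mathrm{sinc}}=(2\pi)^{-1/2}\chi_{[-\pi,\pi]}$, I would obtain
\[
\F_A\psi(\omega)=\frac{\eta_A(\omega)}{\sqrt{2\pi|b|}}\chi_{I+p}(\omega).
\]
Because $\F_A\psi$ is supported in a single translate of the fundamental domain $I=[-|b|\pi,|b|\pi]$, the periodization $\sum_k|\F_A\psi(\omega+2kb\pi)|^2$ reduces to the constant $(2\pi|b|)^{-1}$ on $I$. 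By Theorem \ref{th:ons char} this identifies $\{T_k^A\psi:k\in\Z\}$ as an orthonormal system, hence an orthonormal basis of $V_A(\psi)$, and in particular a Riesz basis.

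Next I would compute the auxiliary function $\psi_A^\dagger(\omega)=\sum_n\psi(n)\rho_A(n)e^{-in\omega/b}$. Since $\psi(n)=e^{-i(a/2b)n^2}\mathrm{sinc}(n)=\delta_{n,0}$, this collapses to $\psi_A^\dagger\equiv 1$, whence $1/\psi_A^\dagger\in L^2(I)$ trivially. Applying Theorem \ref{exact_R}, the reconstructing function $S\in V_A(\psi)$ is characterized by $\F_AS=\F_A\psi/\psi_A^\dagger=\F_A\psi$, so $S=\psi$, and every $f\in V_A(\psi)$ satisfies $f(t)=\sum_k f(k)\,T_k^A\psi(t)$.

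The final step is purely algebraic: expand
\[
T_k^A\psi(t)=e^{-i(a/b)k(t-k)}\psi(t-k)=e^{-i(a/b)k(t-k)}e^{-i(a/2b)(t-k)^2}\mathrm{sinc}(t-k),
\]
and simplify the combined exponent to $i(a/2b)(k^2-t^2)$, yielding \eqref{SSTEQ}. The only real obstacle I anticipate is that $\psi\notin W(C,\ell^1(\Z))$ (its modulus is $|\mathrm{sinc}|$, whose peaks decay only like $1/|t|$), so the Wiener-amalgam hypothesis of Theorem \ref{exact_R} is not literally satisfied; but this is harmless here because $\psi_A^\dagger$ is constructed from the sequence $\{\psi(n)\}=\{\delta_{n,0}\}$, which is trivially in $\ell^1$, and so the single place in the proof of Theorem \ref{exact_R} where the amalgam condition intervenes can be bypassed directly.
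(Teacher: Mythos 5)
Your proposal is correct and follows essentially the same route as the paper: compute $\F_A\psi=\frac{\eta_A}{\sqrt{2\pi|b|}}\chi_{I+p}$, deduce orthonormality of $\{T_k^A\psi\}$ from the periodization identity, observe $\psi_A^\dagger\equiv 1$ so that $S=\psi$ in Theorem \ref{exact_R}, and expand $T_k^A\psi$. Your added observation that $\psi\notin W\big(C,\ell^1(\Z)\big)$ is a legitimate point the paper silently glosses over, and your justification for why the conclusion survives (the samples $\{\psi(n)\}=\{\delta_{n,0}\}$ are trivially summable and functions in $V_A(\psi)$ are continuous, being bandlimited in the SAFT domain) is sound.
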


\begin{corollary} \label{after_sst}
Let $\phi=sinc$. Let $\psi=C_{-\frac{a}{b}}\phi$. Then $V_A(\psi)=B_{I+p}^A$, where $B_{I+p}^A=\{f\in L^2(\R):supp\mathscr F_A(f)\subseteq I+p\}.$
\begin{proof}
It is clear from Corollary \ref{SST} that $V_A(\psi)\subseteq B_{I+p}^A$. Now we shall show that $B_{I+p}^A$ is a closed subspace of $L^2(\R)$ and the orthogonal complement of $V_A(\psi)$ in $B_{I+p}^A$ is zero, which will prove our assertion. As we have $\mathscr F_A(C_{-\frac{a}{b}}f)(\omega)=\frac{\eta_A(\omega)}{\sqrt{|b|}}\hat{f}(\frac{\omega-p}{b})$, $f\mapsto C_{-\frac{a}{b}}f$ is an isometry from $B_{[-\pi, \pi]}(:=\{f\in L^2(\R):supp \hat{f}\subset[-\pi, \pi]\})$ onto $B_{I+p}^A$. Therefore $B_{I+p}^A$ is a closed subspace of $L^2(\R)$. In Corollary \ref{SST} we have seen that $\{T_k^A\psi : k\in \Z\}$ is an orthonormal basis for $V_A(\psi)$. Consider $f\in B_{I+p}^A$ such that $\langle f, T_k^A\psi \rangle = 0,~~\forall~ k\in \Z$. Now we shall show that $f=0$. For all $k \in \Z$ we have
\begin{align*}
0=\langle f, T_k^A\psi \rangle &= \langle \mathscr F_A(f), \mathscr F_A(T_k^A\psi)\rangle \\
&=\int_{I+p} \mathscr F_A(f)(\omega)\overline{\rho_A}(k)e^{\frac{i}{b}k\omega}\overline{\mathscr F_A(\psi)(\omega)}d\omega \\
&= \frac{\overline{\rho_A}(k)}{\sqrt{2\pi |b|}}\int_{I+p}\mathscr F_A(f)(\omega)\overline{\eta_A(\omega)}e^{\frac{i}{b}k\omega}d\omega.
\end{align*}
Using the fact that $\{\frac{1}{\sqrt{2\pi |b|}}e^{-\frac{i}{b}k\omega}:k\in \Z\}$ is an orthonormal basis for $L^2(I+p)$, we get $\mathscr F_A(f)(\omega)\overline{\eta_A(\omega)}=0$ for a.e. $\omega \in I+p$, which in turn implies that $\|f\|_2=\|\mathscr F_A(f)\|_2=0$, proving our assertion.
\end{proof}
\end{corollary}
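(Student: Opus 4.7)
The plan is to prove the two inclusions $V_A(\psi)\subseteq B^A_{I+p}$ and $B^A_{I+p}\subseteq V_A(\psi)$ separately. The first inclusion is almost immediate from the computation carried out in Corollary \ref{SST}. There it was shown that $\F_A\psi(\omega)=\frac{\eta_A(\omega)}{\sqrt{2\pi|b|}}\chi_{I+p}(\omega)$, so $\mathrm{supp}\,\F_A\psi\subseteq I+p$. Using the translation rule \eqref{gtsaft} from Proposition \ref{pro:SAFT1}, each $A$-translate $T_k^A\psi$ satisfies $\F_A(T_k^A\psi)(\omega)=\rho_A(k)e^{-ik\omega/b}\F_A\psi(\omega)$, hence also has $\F_A$-support in $I+p$. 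Since $B^A_{I+p}$ will be shown to be closed in $L^2(\R)$, the closed linear span $V_A(\psi)$ is contained in $B^A_{I+p}$.

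For the closedness of $B^A_{I+p}$, I would observe that $B^A_{I+p}=\F_A^{-1}\!\bigl(L^2(I+p)\bigr)$ and $\F_A$ is unitary on $L^2(\R)$; alternatively, as indicated in the statement, one can use the isometry $f\mapsto C_{-a/b}f$ together with the identity $\F_A(C_{-a/b}f)(\omega)=\frac{\eta_A(\omega)}{\sqrt{|b|}}\widehat{f}\!\left(\frac{\omega-p}{b}\right)$ to transfer the problem to the classical Paley--Wiener space $B_{[-\pi,\pi]}$, which is closed.

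For the reverse inclusion, the natural route is the orthogonal complement. Since by Corollary \ref{SST} the family $\{T_k^A\psi:k\in\Z\}$ is an orthonormal basis of $V_A(\psi)$, it suffices to show that if $f\in B^A_{I+p}$ satisfies $\langle f,T_k^A\psi\rangle=0$ for every $k\in\Z$, then $f=0$. Applying Parseval, using $\mathrm{supp}\,\F_Af\subseteq I+p$, and plugging in the explicit expression for $\F_A(T_k^A\psi)$, the condition reduces to
\[
\int_{I+p}\F_Af(\omega)\,\overline{\eta_A(\omega)}\,e^{ik\omega/b}\,d\omega=0,\qquad k\in\Z.
\]
Then I would invoke the completeness of the exponential system $\{(2\pi|b|)^{-1/2}e^{-ik\omega/b}:k\in\Z\}$ in $L^2(I+p)$ (which is just a translate of $L^2(I)$) to conclude that $\F_Af(\omega)\,\overline{\eta_A(\omega)}=0$ almost everywhere on $I+p$. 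Since $|\eta_A|=1$, this forces $\F_Af=0$ a.e., and by the Plancherel identity $\|f\|_2=\|\F_Af\|_2$, we get $f=0$.

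I do not anticipate a serious obstacle; the argument is essentially routine once one recognizes that $B^A_{I+p}$ is the unitary image of $L^2(I+p)$ under $\F_A^{-1}$ and that the trigonometric system on $I+p$ is complete. The only point that requires a little care is verifying that shifting the interval from $I$ to $I+p$ does not destroy completeness of the exponentials, which follows immediately by multiplying by $e^{-ikp/b}$ and reducing to the standard Fourier basis on $I$.
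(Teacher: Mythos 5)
Your proposal is correct and follows essentially the same route as the paper: establish closedness of $B^A_{I+p}$ (via the unitarity of $\F_A$ or the chirp isometry onto the classical Paley--Wiener space), note $V_A(\psi)\subseteq B^A_{I+p}$ from the support computation in Corollary \ref{SST}, and kill the orthogonal complement using the orthonormal basis $\{T_k^A\psi\}$ together with completeness of the exponentials $\{e^{-ik\omega/b}\}$ in $L^2(I+p)$. The extra details you supply (the translation rule for the first inclusion, the shift-of-interval remark for completeness) are correct but only flesh out steps the paper leaves implicit.
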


\par As a consequence of Corollary \ref{SST}, Corollary \ref{after_sst} can be restated as Shannon sampling theorem for the SAFT domain.  
\begin{corollary}
Let $f\in L^2(\R)$ be such that the $supp(\F_Af)\subseteq I+p$. Then the following sampling formula holds
$$
f(t)=\sum_{k\in \Z}f(k)e^{i\frac{a}{2b}(k^2-t^2)}sinc(t-k),\quad t\in \R.
$$
\end{corollary}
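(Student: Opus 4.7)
The plan is to read off this corollary as an immediate combination of Corollary \ref{SST} and Corollary \ref{after_sst}. The hypothesis $\mathrm{supp}(\F_A f)\subseteq I+p$ is exactly the definition of $f\in B^A_{I+p}$, so by Corollary \ref{after_sst} we have $f\in V_A(\psi)$ with $\psi=C_{-a/b}\operatorname{sinc}$. Since Corollary \ref{SST} already produces the identity
\[
f(t)=\sum_{k\in\Z}f(k)\,e^{i\frac{a}{2b}(k^2-t^2)}\operatorname{sinc}(t-k),\quad t\in\R,
\]
for every element of $V_A(\psi)$, the corollary follows without further work.

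The only thing that needs checking, and it was already checked in the proof of Corollary \ref{SST}, is that the $A$-translates of $\psi$ assemble into the kernel $e^{i\frac{a}{2b}(k^2-t^2)}\operatorname{sinc}(t-k)$. Unwinding the definitions,
\[
T_k^A\psi(t)=e^{-i\frac{a}{b}k(t-k)}\,\psi(t-k)=e^{-i\frac{a}{b}k(t-k)}\,e^{-i\frac{a}{2b}(t-k)^2}\operatorname{sinc}(t-k),
\]
and the two chirp factors combine to $e^{-i\frac{a}{2b}(t^2-k^2)}=e^{i\frac{a}{2b}(k^2-t^2)}$, matching the stated formula.

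There is essentially no obstacle; the work was done earlier. The one subtle point, handled in Corollary \ref{after_sst}, is that $B^A_{I+p}$ is not obviously covered by the principal $A$-shift invariant space $V_A(\psi)$, and the argument there leveraged the fact that $\{T_k^A\psi:k\in\Z\}$ is an orthonormal basis of $V_A(\psi)$ together with the orthonormality of $\{(2\pi|b|)^{-1/2}e^{-ik\omega/b}:k\in\Z\}$ on $L^2(I+p)$ to rule out any nonzero orthogonal complement. With that equality of spaces in hand, the present corollary is just a restatement of Corollary \ref{SST} in terms of the bandlimiting hypothesis on $\F_A f$.
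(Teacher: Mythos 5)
Your proposal is correct and is exactly the paper's intended argument: the paper gives no separate proof, merely noting that the corollary is Corollary \ref{SST} restated via the identification $B^A_{I+p}=V_A(\psi)$ from Corollary \ref{after_sst}, which is precisely your reduction. Your verification of $T_k^A\psi(t)=e^{i\frac{a}{2b}(k^2-t^2)}\mathrm{sinc}(t-k)$ is also the correct bookkeeping (indeed it fixes a sign typo in the paper's own display in Corollary \ref{SST}, where the factor should read $e^{-i\frac{a}{2b}(t-k)^2}$ since $\psi=C_{-a/b}\,\mathrm{sinc}$).
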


\begin{corollary}
Let $\phi = \chi_{[0,1]}\star \chi_{[0,1]}.$ Then for every $f\in V_A(\phi)$, we have the following reconstruction formula
$$
f(t)=\sum_{n\in \Z}f(n)T_n^A\phi (t),\quad t\in \R.
$$
\begin{proof}
Let $G$ be the Gramian associated with the sequence $\{T_k^A\phi :k\in \Z\}.$ Then $G_{j,k}=\langle T_k^A\phi, T_j^A\phi \rangle $ is a tridiagonal operator with all the diagonal elements, $d_i=1$. The elements above the diagonal are given by\\
 $u_j=
\begin{cases}
\frac{b^2}{ a^2}e^{i\frac{a}{b}j}(e^{-i\frac{a}{b}}(\frac{2ib}{a}-1) -(1+\frac{2ib}{a})), & a\neq 0\\
1/6 &a=0,
\end{cases}$\\
and the elements below the diagonal are given by \\$l_j=
\begin{cases}
-\frac{b^2}{ a^2}e^{-i\frac{a}{b}j }(e^{i\frac{a}{b}}(\frac{2ib}{a}+1) +(1-\frac{2ib}{a})), & a\neq 0\\
1/6 &a=0.
\end{cases}$\\
Notice that for $a=0,~G$ is strictly diagonally dominant and hence invertible, for $a\neq 0$ $|u_j|,~|l_j|$ are dominated by $\frac{4b^3}{ a^3}\sqrt{\frac{a^2}{4b^2}+1.}$ Now let $\frac{a}{2b}=r$ with $|r|\geq 1.2$, then $G$ is strictly diagonally dominant and hence invertible. In other words $\{T_k^A\phi:k\in \Z\}$ forms a Riesz basis for $V_A(\phi).$ Further $\phi_A^\dagger =1$ on $I$, from which the required assertion follows.
\end{proof}
\end{corollary}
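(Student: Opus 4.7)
The plan is to invoke Theorem~\ref{exact_R} with the tent B-spline $\phi=\chi_{[0,1]}\star\chi_{[0,1]}$, which is continuous, supported on $[0,2]$, and hence trivially lies in $W(C,\ell^{1}(\Z))$. Two further hypotheses must be checked: the Riesz basis property of $\{T_k^A\phi:k\in\Z\}$ and the integrability $1/\phi_A^\dagger\in L^{2}(I)$. Once both hold, the theorem delivers a reconstruction formula with some $S\in V_A(\phi)$, and the shape of the stated conclusion will emerge from an explicit identification of $S$.

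For the Riesz basis condition I would work directly with the Gramian $G_{j,k}=\langle T_k^A\phi,\,T_j^A\phi\rangle$. Since $\mathrm{supp}(T_k^A\phi)=[k,k+2]$, the overlap with $T_j^A\phi$ has positive measure only when $|j-k|\le 1$, so $G$ is tridiagonal. Plugging in $T_k^A\phi(t)=e^{-i(a/b)k(t-k)}\phi(t-k)$, the diagonal entries collapse to the constant $\|\phi\|_2^{2}$, while the super- and sub-diagonal entries are integrals of $\phi(t)\phi(t\mp 1)$ against a chirp of slope $a/b$, decorated by a unimodular index-dependent phase. Bounded invertibility of $G$ on $\ell^{2}(\Z)$ then follows from strict diagonal dominance: in the classical limit $a=0$ the off-diagonals reduce to the standard B-spline value $1/6$, and in the chirp regime $a\ne 0$ one estimates the oscillatory integrals to obtain an $|a/b|$-dependent dominance bound that holds provided $|a/(2b)|$ is not too small.

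For the second hypothesis, I would compute $\phi_A^\dagger$ directly from its defining series. The only integer in the interior of $\mathrm{supp}(\phi)$ is $1$, with $\phi(1)=1$, so the series collapses to
\begin{equation*}
\phi_A^\dagger(\omega)=\rho_A(1)\,e^{-i\omega/b},
\end{equation*}
which is unimodular on $I$; hence $1/\phi_A^\dagger\in L^{\infty}(I)\subseteq L^{2}(I)$. Theorem~\ref{exact_R} then yields $S\in V_A(\phi)$ with $\F_A S=\F_A\phi/\phi_A^\dagger$; using the intertwining $\F_A(T_x^A\phi)(\omega)=\rho_A(x)e^{-ix\omega/b}\F_A\phi(\omega)$ and the group law of Proposition~\ref{pro:SAFT1}(i), $S$ is identified as an $A$-translate of $\phi$ up to a scalar phase, and absorbing this shift into the sampling index in $\sum_n f(n)\,T_n^A S$ produces the stated formula.

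The main obstacle is the Gramian step. The tridiagonal structure is a major simplification, but the row-dependent chirp phases in the off-diagonal entries prevent a single clean diagonal-dominance bound applicable for every $(a,b)$; one is forced into a parameter-dependent oscillatory-integral estimate, and the Riesz basis property is ultimately secured only on a suitable subset of parameter values.
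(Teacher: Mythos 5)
Your route is the same as the paper's: invoke Theorem~\ref{exact_R}, establish the Riesz basis property by showing the tridiagonal Gramian is strictly diagonally dominant (exactly for $a=0$, and for $a\neq 0$ only on a restricted range of $a/b$ after estimating the oscillatory off-diagonal integrals), and then compute $\phi_A^\dagger$ to identify the reconstruction function $S$. The paper records the explicit off-diagonal entries where you leave them as estimated chirp integrals, but the logic and the limitation to a subset of parameters are identical.

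The one substantive divergence is the value of $\phi_A^\dagger$, and here your computation exposes a real issue in the last step. For the stated generator $\phi=\chi_{[0,1]}\star\chi_{[0,1]}$ (supported on $[0,2]$, with $\phi(1)=1$ the only nonzero integer sample), your value $\phi_A^\dagger(\omega)=\rho_A(1)e^{-i\omega/b}$ is the correct one; the paper instead asserts $\phi_A^\dagger\equiv 1$, which holds for the centered spline $\chi_{[-1/2,1/2]}\star\chi_{[-1/2,1/2]}$ used in the paper's numerical section and is exactly what is needed to get $S=\phi$ and hence the stated formula. With your (correct) $\phi_A^\dagger$, carrying the identification through gives $\F_AS=\overline{\rho_A(1)}e^{i\omega/b}\F_A\phi$, i.e.\ $S=e^{-ia/b}T_{-1}^A\phi$, and the projective relation $T_n^AT_{-1}^A=e^{ian/b}T_{n-1}^A$ turns $\sum_n f(n)T_n^AS$ into $\sum_m f(m+1)\,e^{iam/b}\,T_m^A\phi(t)$. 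The residual chirp phase $e^{iam/b}$ and the unit shift of the sample points do not cancel, so your claim that ``absorbing this shift into the sampling index produces the stated formula'' is not justified as written: either the generator must be recentered or the formula must carry these extra factors. This is arguably a defect of the corollary's statement rather than of your method, but the final sentence of your plan papers over it.
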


\section{A local reconstruction method} As in the case of classical shift invariant space we can obtain a local reconstruction method for functions belonging to $V_A(\phi)$ with continuous generators satisfying polynomial decay from their samples. We state the results without the proof as the proofs follow similar lines. We refer to the works \cite{sarvesh}, \cite{Siva}.

\begin{propn}\label{lrp}
Let $\phi$ be a complex valued continuous function on $\R$ satisfying $\phi(x)=o(\frac{1}{|x|^\rho})~(\rho>1).$ Assume that $\{T_k^A\phi:k\in \Z\}$ is a Riesz basis for $V_A(\phi)$. Let $f\in V_A(\phi)$ and $[a^\prime,b^\prime]$ be an interval in $\R.$ Then for a given $\epsilon >0 $ there exist a positive integer $M$ and a sequence $c_f=\{c_k\}\in \ell^2(\Z)$ such that
$$
|f(x)-g_r(x)|<\|c_f\|_{\ell^2(\Z)}\frac{\epsilon}{N^\frac{\rho}{2}},
$$
for all $N\geq M$, for all $x\in [a^\prime,b^\prime]$ and $g_r(x)=\sum_{k\in[a^\prime-N+1,b^\prime+N-1]}c_kT_k^A\phi$. In other words $f|_{[a^\prime,b^\prime]}$ can be approximately determined by a finite number of coefficients $c_k$ locally. 
\end{propn}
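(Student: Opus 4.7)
The strategy is to exploit the Riesz basis expansion of $f$ directly. Since $\{T_k^A\phi : k \in \Z\}$ is a Riesz basis for $V_A(\phi)$, there is a unique sequence $c_f = \{c_k\} \in \ell^2(\Z)$ with $f = \sum_{k \in \Z} c_k T_k^A\phi$ in $L^2(\R)$, and its norm is controlled by $\|f\|_{L^2}$. The function $g_r$ is precisely the truncation of this expansion to the window $[a'-N+1,\, b'+N-1]$, so the error $f - g_r$ is the tail of the series. The essential simplification comes from the pointwise identity $|T_k^A\phi(x)| = |e^{-i\frac{a}{b}k(x-k)}\phi(x-k)| = |\phi(x-k)|$, which eliminates the chirp factor in modulus and reduces the problem to controlling shifts of $\phi$.

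First I would apply Cauchy--Schwarz to the tail, obtaining for $x \in [a',b']$
\[
|f(x) - g_r(x)| \;\leq\; \|c_f\|_{\ell^2(\Z)} \left( \sum_{k \notin [a'-N+1,\, b'+N-1]} |\phi(x-k)|^2 \right)^{1/2}.
\]
Second, I would note that for $x \in [a', b']$ and $k \leq a' - N$ (resp.\ $k \geq b' + N$), the separation $|x - k| \geq N$ holds. Third, the decay hypothesis $\phi(y) = o(|y|^{-\rho})$ supplies, for any prescribed $\epsilon' > 0$, an integer $M_1$ with $|\phi(y)| \leq \epsilon' |y|^{-\rho}$ whenever $|y| \geq M_1$. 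Choosing $M \geq M_1$, for $N \geq M$ the tail is dominated by $(\epsilon')^2 \sum_{|x-k|\geq N} |x-k|^{-2\rho}$, which by comparison with $\int_{N-1}^{\infty} t^{-2\rho}\,dt$ on each half-line is bounded uniformly in $x \in [a', b']$ by $C(\rho)\,(\epsilon')^2 / N^{2\rho - 1}$.

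Taking square roots yields the rate $C' \epsilon' / N^{\rho - 1/2}$; since $\rho > 1$ we have $\rho - 1/2 \geq \rho/2$, so this is at most $C' \epsilon'/N^{\rho/2}$. Choosing $\epsilon' = \epsilon/C'$ at the outset delivers the desired estimate.

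The chief technical obstacle is making the tail estimate uniform in the base point $x \in [a',b']$: because $\{x-k : k \in \Z\}$ is only a translate of $\Z$, one must argue via the monotonicity of $t \mapsto t^{-2\rho}$ on $[N,\infty)$ and $(-\infty,-N]$ so that regardless of the fractional part of $x$ the sum is majorised by the same integral. A minor but necessary check is that $\phi \in C(\R)$ together with the $o(|y|^{-\rho})$ decay ($\rho > 1$) makes $\sum_k c_k T_k^A\phi(x)$ converge absolutely and uniformly on $[a',b']$, legitimising the pointwise manipulation of the tail; this in turn relies on Cauchy--Schwarz and on $\sum_k |\phi(x-k)|^2$ being finite, which is immediate from the same decay.
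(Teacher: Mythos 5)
Your argument is correct and is essentially the one the paper has in mind: the paper omits the proof, remarking only that it follows the classical case of \cite{sarvesh}, \cite{Siva}, and the whole point is exactly your observation that $|T_k^A\phi(x)|=|\phi(x-k)|$ reduces the tail estimate to the ordinary translates, after which truncation, Cauchy--Schwarz, and the integral comparison for $\sum_{|x-k|\ge N-1}|x-k|^{-2\rho}$ give the rate $N^{-(\rho-1/2)}\le N^{-\rho/2}$. Your closing remarks (uniform absolute convergence so that the $L^2$ expansion has a continuous pointwise representative, and uniformity of the tail bound in $x$) are precisely the right points to check, so nothing is missing.
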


\begin{theorem}
Fix $\rho \geq 2.$ Let $\phi$ be a complex valued continuous function on $\R$ satisfying $\phi(x)=o(\frac{1}{|x|^\rho}).$ Assume that $\{T_k^A\phi:k\in \Z\}$ is a Riesz basis for $V_A(\phi)$. Let $f\in V_A(\phi),~[a^\prime,b^\prime]$ be an interval in $\R$ and $\epsilon >0$. Let $M$ be a positive integer obtained in Proposition \ref{lrp}. Consider those points $x_j$ in the sample set $X$ such that $x_j\in [a^\prime,b^\prime]$. Let $(2M+b^\prime-a^\prime-1)\leq \#X\leq M^\rho$, where $\#X$ denotes the number of points in $X$. Define $U_{j,k}=T_k^A\phi(x_j),~1\leq j\leq \#X,~k\in [a^\prime-M+1,b^\prime+M-1]\cap \Z$. Then there exist $g_r\in V_A(\phi)$ such that
$$
\|f|_X-g_r|_X\|\leq \epsilon (1+\|U\|~\|U^\dagger\|) +\mathcal{O}(\epsilon ^2),
$$  
where $U^\dagger$ is the pseudoinverse of $U$. 
\end{theorem}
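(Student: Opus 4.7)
\medskip

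\noindent\textbf{Proposal.} The plan is to combine the local approximation delivered by Proposition \ref{lrp} with a least-squares inversion of $U$ via the pseudoinverse $U^\dagger$. Specifically, apply Proposition \ref{lrp} with $N=M$ to the given $f\in V_A(\phi)$, producing coefficients $c_f=\{c_k\}\in\ell^2(\Z)$ and the finite sum
\[
g_r^\ast(x)=\sum_{k\in K} c_k\, T_k^A\phi(x),\qquad K:=[a'-M+1,b'+M-1]\cap\Z,
\]
with the pointwise bound $|f(x)-g_r^\ast(x)|<\|c_f\|_{\ell^2}\,\epsilon/M^{\rho/2}$ for every $x\in[a',b']$. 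Note that the index set for the columns of $U$ is exactly $K$, and the condition $\#X\le M^\rho$ enables the transfer of this pointwise estimate to the $\ell^2$ norm on samples:
\[
\|f|_X-g_r^\ast|_X\|_{\ell^2}\le \sqrt{\#X}\,\|c_f\|_{\ell^2}\,\frac{\epsilon}{M^{\rho/2}}\le \|c_f\|_{\ell^2}\,\epsilon,
\]
which is the key quantitative bound I will need in the sequel.

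\medskip

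Next, define the computed coefficient vector by $c_r:=U^\dagger(f|_X)$ and set $g_r:=\sum_{k\in K}(c_r)_k\,T_k^A\phi\in V_A(\phi)$; the hypothesis $\#X\ge 2M+b'-a'-1$ ensures that $U$ is a tall (or square) matrix so that $U^\dagger$ is meaningfully the left inverse/least-squares solver. Writing $g_r^\ast|_X = Uc_f$ by definition of $U$, we obtain
\[
c_r-c_f \;=\; U^\dagger\!\bigl(f|_X-Uc_f\bigr) \;=\; U^\dagger\!\bigl(f|_X-g_r^\ast|_X\bigr),
\]
so that $\|c_r-c_f\|\le \|U^\dagger\|\cdot\|c_f\|\,\epsilon$. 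Applying $U$ back gives
\[
\bigl\|g_r|_X-g_r^\ast|_X\bigr\|=\bigl\|U(c_r-c_f)\bigr\|\le \|U\|\,\|U^\dagger\|\,\|c_f\|\,\epsilon,
\]
and a triangle inequality between $f|_X$, $g_r^\ast|_X$, $g_r|_X$ produces the announced bound
\[
\|f|_X-g_r|_X\|\le \|c_f\|\,\epsilon\bigl(1+\|U\|\,\|U^\dagger\|\bigr)+\mathcal{O}(\epsilon^2),
\]
after absorbing $\|c_f\|$ into $\epsilon$ as in the corresponding classical argument of \cite{sarvesh,Siva}.

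\medskip

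The routine pieces are the pointwise-to-$\ell^2$ transfer (which uses exactly the hypothesis $\#X\le M^\rho$ together with Proposition \ref{lrp}) and the two-term triangle inequality. The main obstacle, and the place one must be careful, is the $\mathcal O(\epsilon^2)$ remainder: it arises because $U^\dagger$ is acting on a perturbation of the ``ideal'' right-hand side $Uc_f$, and when one expands $UU^\dagger(f|_X)$ and uses the identity $UU^\dagger U=U$ together with the near-orthogonality of $(I-UU^\dagger)$ and the error vector $f|_X-g_r^\ast|_X$, a second-order cross term appears; controlling this requires the lower bound $\#X\ge 2M+b'-a'-1$ so that $U^\dagger U = I_K$ and the analysis collapses cleanly. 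Everything else is linear algebra; the harmonic-analytic input is entirely encapsulated in Proposition \ref{lrp}, which itself rests on the decay hypothesis $\phi(x)=o(|x|^{-\rho})$ and the Riesz basis property of $\{T_k^A\phi\}_{k\in\Z}$.
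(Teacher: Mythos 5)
The paper does not actually prove this theorem --- it states it without proof and defers to the classical references \cite{sarvesh}, \cite{Siva} --- so there is no in-paper argument to diverge from; your least-squares argument is precisely the standard route those references take, and it is essentially correct. Two caveats. First, the identity $c_r-c_f=U^\dagger(f|_X-Uc_f)$ uses $U^\dagger U c_f=c_f$, which requires $U$ to have full column rank; the row-count condition $\#X\ge 2M+b'-a'-1$ only makes $U$ at least square and does not by itself give $U^\dagger U=I_K$. This is easily repaired (and the bound improved): since $g_r|_X=UU^\dagger (f|_X)$ and $UU^\dagger$ is the orthogonal projection onto $\mathrm{range}(U)\ni Uc_f$, one gets directly
\[
\|f|_X-g_r|_X\|=\|(I-UU^\dagger)(f|_X-Uc_f)\|\le\|f|_X-Uc_f\|\le\|c_f\|_{\ell^2}\,\epsilon,
\]
with no condition-number factor and no second-order term at all. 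Second, and relatedly, your closing discussion of the $\mathcal{O}(\epsilon^2)$ remainder is not substantiated: in the purely linear argument you give there is no cross term, so the $\mathcal{O}(\epsilon^2)$ is simply slack in the stated bound and should be presented as such rather than attributed to a ``near-orthogonality'' mechanism. Finally, note that your bound (like Proposition \ref{lrp} itself) carries the factor $\|c_f\|_{\ell^2}$, which the theorem's literal statement omits; saying it is ``absorbed into $\epsilon$'' is fine as a normalization convention but should be made explicit, since $c_f$ depends on $f$.
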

\subsection*{Experimental results} Local reconstruction method in the SAFT domain is implemented using Mathematica. We take $a=2,~b=3,~d=4,~p=q=0$ in the implementation. 
\par We take $\phi=\chi_{[-\frac{1}{2},\frac{1}{2}]}\star \chi_{[-\frac{1}{2},\frac{1}{2}]}$ and $f$, a linear combination of integer $A$-translates of $\phi.$ Here we reconstruct $f$ in the interval $[-10,10]$ from a sample set $X\subset [-10,10],~\#X=61,~M=10.$ We also plot the SAFT of $f$ and tabulate error for various values of $M$ in Table  \ref{tab:table}.

\begin{figure}
\centering
\begin{subfigure}{.50\textwidth}
  \centering
  \includegraphics[width=.50\linewidth]{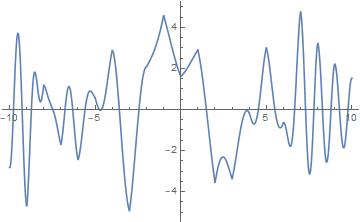}
  \caption{Real part}
  %\label{fig:sub1}
\end{subfigure}%
\begin{subfigure}{.50\textwidth}
  \centering
  \includegraphics[width=.50\linewidth]{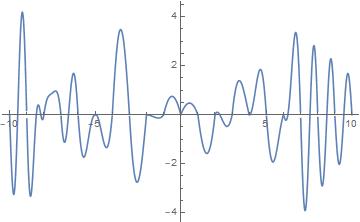}
  \caption{Imaginary part}
  %\label{fig:sub2}
\end{subfigure}
\caption{Original signal}
\label{fig:original}
\end{figure}

\begin{figure}
\centering
\begin{subfigure}{.50\textwidth}
  \centering
  \includegraphics[width=.50\linewidth]{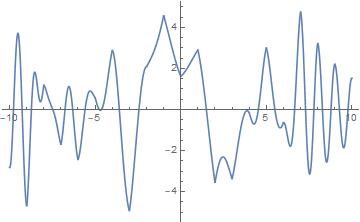}
  \caption{Real part}
  %\label{fig:sub1}
\end{subfigure}%
\begin{subfigure}{.50\textwidth}
  \centering
  \includegraphics[width=.50\linewidth]{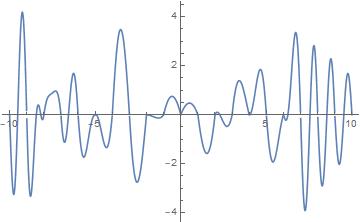}
  \caption{Imaginary part}
  %\label{fig:sub2}
\end{subfigure}
\caption{Reconstructed signal}
\label{fig:reconstructed}
\end{figure}

\begin{figure}
\centering
\begin{subfigure}{.50\textwidth}
  \centering
  \includegraphics[width=.50\linewidth]{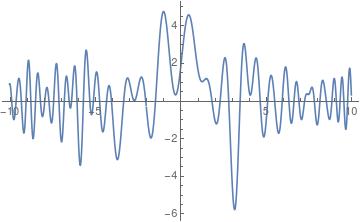}
  \caption{Real part}
  %\label{fig:sub1}
\end{subfigure}%
\begin{subfigure}{.50\textwidth}
  \centering
  \includegraphics[width=.50\linewidth]{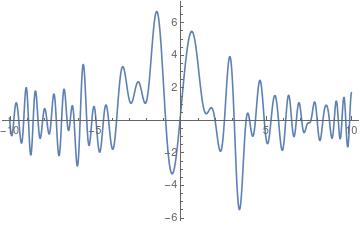}
  \caption{Imaginary part}
  %\label{fig:sub2}
\end{subfigure}
\caption{SAFT of the signal in 
Figure \ref{fig:original}}
\label{fig:saft}
\end{figure}

\begin{table}[H]
  \begin{center}
    \caption{\\Computation of error}
    \label{tab:table}
    \begin{tabular}{c|c} % <-- Alignments: 1st column left, 2nd middle and 3rd right, with vertical lines in between
      \textbf{$M$}  &  \textbf{Error}\\
      
      \hline
      
      $10$  & $5.74792\times 10^{-14}$\\
      $50$  & $5.69042\times 10^{-14}$\\
      $250$ & $5.50736\times 10^{-14}$\\
      $400$ & $4.7852\times 10^{-14}$
    \end{tabular}
  \end{center}
\end{table}

 \section*{Acknowledgement}
% \par We thank the referees for meticulously reading the manuscript and giving us valuable suggestions. In fact, these suggestions helped us to improve the presentation of our paper. % which helped us to enormously improve the earlier version of the manuscript.
%
%
%\par One of the authors (R. R) was partially supported by Visiting professor program funded by the Bavarian State Ministry for Science, Research and the Arts, TUM international centre for the stay at Technical University Munich, Germany for the period October to December 2019. She sincerely thanks professor Massimo Fornasier and Professor Peter Massopust, TUM, for their kind invitation and excellent hospitality for the entire period of visit during October 2019 to September 2020.
\par 

F.F. was partially supported by the Helmholtz Pilot Project Ptychography4.0 ZT-I-0025 and by the project  ZT-I-PF-4-024 within the Helmholtz Imaging Platform.

\bibliography{reference}
\bibliographystyle{amsplain}
\vspace{.02cm}
\textbf{Statements and Declarations}\\
1. All authors read and approved the final manuscript. There is no conflict of interest.\\
2. There is no associated data in the manuscript.

\end{document}